\newcommand{\id}{\mathbbm{1}}
\newcommand{\R}{\mathbb{R}}
\newcommand{\Pp}{\mathbb{P}}
\newcommand{\DD}{\mathcal{D}}
\newcommand{\N}{\mathbb{N}}
\newcommand{\PP}{\mathcal{P}}
\newcommand{\MM}{\mathcal{M}}
\newcommand{\Zz}{\mathcal{Z}}
\newcommand{\EE}{\mathcal{E}}
\newcommand{\I}{\mathcal{I}}
\newcommand{\E}{\mathbb{E}}
\newcommand{\Z}{\mathbb{Z}}
\newcommand{\Ss}{\mathcal{S}}
\newcommand{\GSs}{\mathfrak{S}}
\newcommand{\sgn}{\mathrm{sgn}}
\newcommand{\Inv}{\mathrm{Inv}}
\newcommand{\Id}{\mathrm{id}}
\newcommand{\rev}{\mathrm{rev}}
\newtheorem{theorem}{Theorem}[section]
\newtheorem*{theorem*}{Theorem}
\newtheorem{lemma}[theorem]{Lemma}
\newtheorem{corollary}[theorem]{Corollary}
\newtheorem{proposition}[theorem]{Proposition}
\theoremstyle{definition}
\newtheorem{remark}[theorem]{Remark}
\begin{document}

\selectlanguage{english}

\title{The global and local limit of the continuous-time Mallows process}

\author[$\star$]{Radosław Adamczak}
\author[$\star$]{Michał Kotowski}

\affil[$\star$]{Institute of Mathematics, University of Warsaw}

\maketitle

\begin{abstract}
Continuous-time Mallows processes are processes of random permutations of the set $\{1, \ldots, n\}$ whose marginal at time $t$ is the Mallows distribution with parameter $t$. Recently Corsini showed that there exists a unique Markov Mallows process whose left inversions are
independent counting processes. We prove that this process admits a global and a local limit as $n \to \infty$. The global limit, obtained after suitably rescaling space and time, is an explicit stochastic process on $[0,1]$ whose description is based on the permuton limit of the Mallows distribution, analyzed by Starr. The local limit is a process of permutations of $\mathbb{Z}$ which is closely related to the construction of the Mallows distribution on permutations of $\mathbb{Z}$ due to Gnedin and Olshanski. Our results demonstrate an analogy between the asymptotic behavior of Mallows processes and the recently studied limiting properties of random sorting networks.
\end{abstract}

\tableofcontents

\begin{section}{Introduction}\label{sec:introduction}

The topic of this paper are limits of permutation-valued stochastic processes related to the Mallows model of random permutations. Let $\Ss_n$ be the group of permutations of $[n] = \{1, \ldots, n\}$. For $q \in [0, \infty)$ and $n \in \N$ the \emph{Mallows distribution with parameter $q$} is the probability measure $\pi_{n,q}$ on $\Ss_n$ defined by
\begin{equation}\label{eq:definition-of-mallows}
\pi_{n,q}(\sigma) = \frac{q^{\Inv(\sigma)}}{\Zz_{n,q}},
\end{equation}
where $\Inv(\sigma) = \left|\left\{ (i,j) \, : \, i < j, \sigma(i) > \sigma(j) \right\}\right|$ is the number of inversions of $\sigma$ and $\Zz_{n,q} = \prod\limits_{i=1}^{n} (1 + q + \ldots + q^{i-1}) $ is the normalizing constant.

For $q = 0$ the measure is concentrated on the identity permutation, while $q=1$ corresponds to the uniform distribution on $\Ss_n$. Informally speaking, the Mallows distribution for $q \in [0,1)$ tends to favor more ordered permutations, closer to the identity permutation $\Id_n = (1 \, 2 \, \ldots \, n)$, while for $q \in (1, \infty)$ it favors permutations closer to the reverse permutation $\rev_n = (n \, \ldots \, 2 \, 1)$.

This model, introduced originally by Mallows (\cite{mallows}) in statistical ranking theory, has attracted considerable attention in a number of contexts, including the study of its cycle structure (\cite{gladkich-peled}, \cite{he-muller-verstraaten}), asymptotics of the longest increasing subsequence (\cite{bhatnagar-peled}, \cite{basu-bhatnagar}, \cite{mueller-starr}), Markov chain mixing times (\cite{diaconis-ram}, \cite{BBHM}), integrable systems in statistical physics (\cite{starr}), binary search trees (\cite{addario-berry-corsini}, \cite{corsini-trees}) and stable matchings (\cite{stable-matchings}). We would also like to highlight its role in recent investigations of interacting particle systems such as ASEP and its generalizations, see, e.g. \cite{bufetov-hecke}, \cite{bufetov-nejjar}, \cite{bufetov-chen}. More general exponential families of permutations and their significance in statistics are discussed in \cite{MR3476619}.

To study the asymptotic behavior of Mallows random permutations it is natural to consider their convergence to an appropriately defined limiting object as $n \to \infty$. One such notion of a limit (a `global' one) is provided by the theory of permutons (\cite{permuton-paper}), by now a well-developed subject. These are measures on the unit square with uniform marginals, obtained by looking at the permutation matrix of a permutation `from far away' (which corresponds to scaling spacing by $\frac{1}{n}$). Starr (\cite{starr}) and later Starr and Walters (\cite{starr-walters}) proved that Mallows permutations sampled from $\pi_{n,q}$ admit a deterministic limit in this sense when $q = q_{n} \to 1$ is scaled appropriately as $n \to \infty$ (more precisely, $q_n \approx 1 + \frac{\beta}{n}$ for $\beta \in \R$).

Another kind of a limit, a `local' one, is obtained when one does not rescale space and instead considers permutations (bijections) of $\Z$ as limiting objects. This notion has its roots in the work of Gnedin and Olshanski (\cite{gnedin-olshanskii-1}, \cite{gnedin-olshanskii}), who proved the existence of the analog of the Mallows distribution for permutations of $\Z$ for $q \in (0,1)$.

The focus of this work are similar kinds of limits for permutation-valued \emph{stochastic processes}. Such processes have been of interest both from the point of view of probability theory and statistical physics (\cite{Goldschmidt:2011aa}). Perhaps the most well-known example is the interchange process, obtained by composing transpositions along edges of a given finite or infinite graph. We refer the reader to \cite{MR3726904} for some background on this process. Another example, which will be particularly relevant for us as a point of comparison, is given by \emph{random sorting networks} (\cite{random-sorting-networks}). These are shortest paths in $\Ss_n$ from the identity permutation to the reverse permutation, chosen uniformly at random, when adjacent transpositions are used as generators. Their remarkable structure was studied in a number of works, culminating in the proof of the sine curve conjecture (\cite{MR4467309}) -- after appropriate space and time rescaling, elements in a random sorting network follow sine curves described by the so-called Archimedean process. The process also possesses a local limit (\cite{random-sorting-networks-local}, \cite{gorin-rahman}) which is a stationary process of permutations of $\Z$, with interesting relations to random matrix theory.

We will be interested in continuous-time \emph{Mallows processes}, which are processes of permutations whose marginal at time $t$ is given by the Mallows distribution with parameter $t$. Thus, the parameter $q$ in the definition of $\pi_{n,q}$ plays the role of time. Such processes were introduced recently in \cite{corsini}. As $q=0$ corresponds to the identity permutation and for $q \to \infty$ the distribution $\pi_{n,q}$ is concentrated close to the reverse permutation, Mallows processes can be thought of as interpolating between the identity and the reverse permutation in $\Ss_n$, not unlike random sorting networks.

Indeed, we will prove that under certain assumptions a Mallows process, like a random sorting network, admits both a global limit (in which path of an element is a stochastic process with values in $[0,1]$) and a local limit (which is a process of permutations of $\Z$). In the global limit, with high probability elements follow random curves given by an explicit formula, like in the sine curve conjecture. The local limit is a transposition process which admits a description in terms of a family of Markov birth processes. This is the content of our main theorems, Theorem \ref{th:main-theorem-global} and Theorem \ref{th:main-theorem-local}, which we state below after providing necessary preliminary definitions.

\begin{subsection}{Preliminaries}

\paragraph{The Mallows distribution and inversions numbers.} It will be convenient to work with the following description of permutations. For a permutation $\sigma \in \Ss_n$ we define its \emph{left inversion vector} as the sequence $(\ell_1(\sigma), \ldots, \ell_n(\sigma))$, where
\[
\ell_i(\sigma) = \left|\left\{ j \in \{0, \ldots, i-1\} \, : \, \sigma(j) > \sigma(i) \right\}\right|
\]
is the number of \emph{left inversions} of element $i$ in permutation $\sigma$. It is classical that admissible inversion vectors of length $n$ are in bijection with permutations on $n$ elements. Given a vector $\left( \ell_1, \ldots, \ell_n \right) \in \{0\} \times \{0,1\} \times \ldots \times \{0, 1, \ldots, n-1\}$, the corresponding permutation $\sigma$ is constructed by setting $\sigma(n) = n - \ell_n$ and then recursively $\sigma(k) = x_{k - \ell_k}$, where $(x_1, \ldots, x_k)$ are elements of $[n] \setminus \{\sigma(n), \ldots, \sigma(k+1)\}]$ sorted in ascending order.

An important property of the Mallows distribution is that for $\sigma \sim \pi_{n,q}$ the inversion numbers $\ell_{i}(\sigma)$ are independent, with $\ell_{i}(\sigma)$ following the distribution
\[
\Pp\left( \ell_{i}(\sigma) = j \right) = \frac{q^{j}}{\sum\limits_{k=0}^{i-1} q^k}, \, j = 0, \ldots, i - 1.
\]
For $q < 1$ this is the truncated geometric distribution with parameter $1-q$. Thus the above bijection provides a simple way of sampling permutations from $\pi_{n,q}$.
\paragraph{Continuous-time Mallows processes.} Let $\sigma^n = \left(\sigma_{t}^{n}\right)_{t \in [0, \infty)}$ be a continuous-time stochastic process with values in $\Ss_n$. We will call $\sigma^n$ a \emph{continuous-time Mallows process} (or simply a Mallows process) if it is a c\`{a}dl\`{a}g stochastic process such that for all $t \in [0, \infty)$ the distribution of $\sigma^{n}_{t}$ is $\pi_{n,t}$. Such processes were first considered in \cite{corsini}, where the reader can find a more general discussion.

In this paper we will be interested in a specific Mallows process possessing certain independence and Markovianity properties.  Following \cite{corsini} we call a Mallows process $\left(\sigma_{t}^{n}\right)_{t \in [0, \infty)}$ \emph{regular} if it satisfies the following three conditions:
\begin{itemize}
\item for any $i \in [n]$ and $t \leq t'$ we have $\ell_i\left(\sigma_{t}^{n}\right) \leq \ell_i\left(\sigma_{t'}^{n}\right)$,
\item $\Inv(\sigma_{t}^{n}) \leq \Inv(\sigma_{t-}^{n}) + 1$,
\item the processes $\left( \ell_i \left(\sigma_{t}^{n}\right) \right)_{t \in [0, \infty)}$ are independent over $i \in [n]$.
\end{itemize}

Consider now for any $i \in [n]$ a time-inhomogeneous Markov birth process $(\ell^{n}_{i}(t))_{t \in [0, \infty)}$ with values in ${0, \ldots, i-1}$, whose rate for jumping from $j$ to $j+1$ at time $t$ is given by
\begin{equation}\label{eq:def-of-rates}
\begin{cases}
p_{i}(j,t) = \frac{1}{1-t} \left( j+1 - \frac{i t^{i-j-1}(t^{j+1} - 1)}{t^i - 1} \right), & \, j = 0, \ldots, i-1, \, t \neq 1, \\
p_{i}(j,1) = \frac{1}{2}(j+1) (i-j-1),  & \, t = 1.
\end{cases}
\end{equation}
We now let $\sigma^n = (\sigma_{t}^{n})_{t \in [0, \infty)}$ be the \emph{birth Mallows process} obtained by considering \emph{independent} processes $\left(\ell^{n}_i(t)\right)_{t \in [0, \infty)}$, $i=1, \ldots, n$, and defining $\sigma^{n}_{t}$ to be the unique permutation $\sigma$ with inversion vector $(\ell_1(\sigma), \ldots, \ell_n(\sigma)) = \left( \ell^{n}_{1}(t), \ldots, \ell^{n}_{n}(t) \right)$. The definition is correct thanks to the bijection between inversion vectors and permutations. It was shown in \cite{corsini} that $\sigma^n$ is indeed a Mallows process, i.e., $\sigma_{t}^{n} \sim \pi_{n,t}$ for each $t \in [0, \infty)$.

Furthermore, we have the following uniqueness result, which serves as an additional motivation to study the birth Mallows process -- the birth Mallows process is the unique regular Mallows process which is also Markov (\cite[Theorem 1.1]{corsini}). Note that in \cite{corsini} the formula for the rate $p_{i}(j,t)$ was given in terms of a certain sum, but the expression actually simplifies to the one given in \eqref{eq:def-of-rates}. From now  on whenever we speak of `the Mallows process' we will mean the birth Mallows process $\sigma^n$ defined above.

\end{subsection}

\begin{subsection}{Main results}

We are now in a position to formulate the main results of our paper, describing two limits of the Mallows process as $n \to \infty$ -- the global one (in which we are interested in statistical behavior of elements in macroscopic neighborhoods) and the local one (in which we look at the asymptotic behavior of individual elements).

Before that, let us comment on the necessity (or lack thereof) of rescaling time in order to obtain a nontrivial limiting process. Let us consider $q \leq 1$ (the argument for $q \geq 1$ is symmetric). It is known (see, e.g., \cite[Theorem 1.1]{bhatnagar-peled}) that for $\sigma \sim \pi_{n,q}$ the probability of the displacement $|\sigma(i) - i|$ being at least $k$ is roughly $q^k$. Therefore, all the displacements will be $o(n)$ (and thus vanishing after rescaling space) unless $q = q_n \approx 1 - \frac{\beta}{n}$ for some $\beta \geq 0$. This is consistent with the scaling for the permuton limit from \cite{starr} mentioned in the introduction. Thus to see nontrivial displacements in the global limit we need to zoom into a window of size $\sim \frac{1}{n}$ around time $t=1$.

Likewise, since in the local limit we would like to observe asymptotic trajectories of individual elements, we cannot expect to obtain a process defined up to and past $t=1$. As the distribution $\pi_{n,q}$ for $q=1$ is the uniform distribution, the limiting process would need to have as its marginal at time $1$ a uniform distribution on permutations of $\Z$, which does not exist. This corresponds to the number of jumps of each element in the limiting process exhibiting an explosion as $t \to 1$.

\paragraph{Global limit.} To state our first result, the convergence of the Mallows process to a global limit, we introduce the following rescaling of time and space. For $i \in [n]$ let $X_{i}^{n}(t) = \frac{1}{n} \sigma_{e^{t/n}}^{n}(i)$ denote the rescaled (global) trajectory of the element $i$ in the Mallows process after $t \mapsto e^{t / n}$ time rescaling. Thus each $\left( X_{i}^{n}(t)\right)_{t \in [0,\infty)}$ is a random path with values in $[0,1]$. Let

\begin{equation}\label{eq:formula-for-global-limit}
z_{x,a}(t) = \frac{1}{t} \log \left( \frac{e^{xt}(1-a) + ae^t}{e^{xt}(1-a) + a} \right),
\end{equation}
where $x,a \in [0,1]$, $t \in \R$ and we take $z_{x,a}(0) = a$. Note that $\lim\limits_{t \to -\infty} z_{x,a}(t) = x$ and $\lim\limits_{t \to \infty} z_{x,a}(t) = 1-x$.

Our first result is
\begin{theorem}\label{th:main-theorem-global}
For any $T > 0$, $\alpha > 0$ and $\varepsilon > 0$ we have the following convergence in probability
\[
\Pp\left( \max\limits_{i \in (\alpha n, (1-\alpha)n)} \sup\limits_{t \in [-T,T]} \left| X_{i}^{n}(t) - z_{\frac{i}{n}, X_{i}^{n}(0)}(t) \right| > \varepsilon \right) \xrightarrow{n \to \infty} 0,
\]
where $z_{x,a}(t)$ is given by \eqref{eq:formula-for-global-limit}.
\end{theorem}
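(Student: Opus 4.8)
The plan is to exploit the product structure of the Mallows process: since the left-inversion processes $\ell_i^n$ are independent birth processes, the position $\sigma_t^n(i)$ of element $i$ is a deterministic function of the \emph{entire collection} $(\ell_j^n(t))_{j \in [n]}$, so I would try to show that each $\ell_i^n$, after rescaling, concentrates around a deterministic trajectory, and that the reconstruction map (inversion vector $\mapsto$ permutation) is sufficiently continuous that this concentration propagates to $X_i^n$. First I would analyze the scaled process $L_i^n(t) := \frac{1}{i}\ell_i^n(e^{t/n})$ for $i \sim cn$: using the explicit rates in \eqref{eq:def-of-rates}, one computes that after the $t \mapsto e^{t/n}$ time change and with $i/n \to c$, the jump rate of $\ell_i^n$ from level $j = un$ to $j+1$ is of order $n$, and the drift of $L_i^n$ converges to an explicit vector field $f_c(u,t)$ obtained as the $n\to\infty$ limit of $\frac{1}{n} e^{t/n}\, p_i(j, e^{t/n})$ with $j = un$. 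A standard fluid-limit / law-of-large-numbers argument for density-dependent birth processes (e.g. via Doob's inequality on the martingale part, whose quadratic variation is $O(1/n)$ after rescaling, followed by Grönwall) then gives $\sup_{t \in [-T,T]} |L_i^n(t) - \ell_c(t;L_i^n(0))| \to 0$ in probability, where $\ell_c(\cdot)$ solves the corresponding ODE; I would solve this ODE explicitly and check that it matches the quantity entering \eqref{eq:formula-for-global-limit}.

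Next I would translate concentration of the inversion vector into concentration of positions. From the bijection, $\sigma_t^n(i)$ equals (up to $\pm 1$) the number of indices $j$ for which, in the recursive construction, element $i$ ends up to the left of $j$; more usefully, there is a classical identity expressing $\frac{1}{n}\sigma_t^n(i)$ as a function of the empirical measure of the pairs $\big(\frac{j}{n}, \frac{1}{j}\ell_j^n(t)\big)$ over $j \in [n]$. The key structural input is monotonicity: since all $\ell_j^n(\cdot)$ are nondecreasing in $t$, the position $\sigma_t^n(i)$ is monotone in each coordinate of the inversion vector, so a uniform (in $j$) control of $\frac{1}{j}\ell_j^n(e^{t/n})$ near its deterministic limit sandwiches $X_i^n(t)$ between two deterministic quantities that I can compute. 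I would therefore upgrade the single-index concentration to a \emph{uniform} statement over $j \in (\alpha' n, (1-\alpha')n)$ (a union bound over $O(n)$ indices is affordable if the single-index estimate has stretched-exponential or even polynomial error — I would aim for the former using concentration for the martingale part), and separately bound the contribution of the $O(\alpha' n)$ extreme indices crudely by $\alpha'$, then let $\alpha' \to 0$.

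The remaining step is a computation: identify $z_{x,a}(t)$ from \eqref{eq:formula-for-global-limit} as precisely the limit produced by the above two ingredients. Concretely, conditioning on $X_i^n(0) = a$ pins down the initial condition $L_i^n(0) \to$ (some explicit function of $a$ and $x = i/n$, coming from Starr's permuton at $\beta = 0$, i.e.\ $q=1$, which is the uniform measure), the ODE evolves this, and the reconstruction integral evaluates to the stated closed form; I would verify the boundary behaviors $z_{x,a}(t)\to x$ as $t \to -\infty$ and $\to 1-x$ as $t\to\infty$ as a consistency check against the known $q\to 0$ and $q\to\infty$ limits of $\pi_{n,q}$. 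The main obstacle, I expect, is the \emph{uniformity in $i$} combined with the \emph{joint} dependence: the map from inversion vector to position mixes all coordinates, so controlling $X_i^n(t)$ requires simultaneous control of all $\ell_j^n$, and one must rule out that the rare indices where $\ell_j^n$ deviates conspire to move some $X_i^n(t)$ by a macroscopic amount. Monotonicity is the essential tool that tames this — it reduces the joint problem to controlling one-dimensional marginals from above and below — but making the monotone sandwich quantitative and checking that the extremal deterministic bounds both converge to $z_{i/n, X_i^n(0)}(t)$ as the truncation parameter vanishes will require care, as will handling the time $t=1$ (i.e.\ rescaled time $t=0$ region) where the rates are defined by the limiting formula $p_i(j,1)$ and the ODE coefficients must be checked to vary continuously through $t=1$.
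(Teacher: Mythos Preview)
Your first step---the fluid limit for the rescaled inversion processes via martingale bounds and Gr\"onwall---is exactly what the paper does (Proposition~\ref{prop:evolution-of-inversions}). The gap is in the second step, where you go from inversions to positions.

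The monotonicity claim is not justified and, as stated, is a non-sequitur: that each $\ell_j^n(\cdot)$ is nondecreasing in $t$ says nothing about coordinate-wise monotonicity of the map $(\ell_1,\ldots,\ell_n)\mapsto\sigma(i)$. Even if some monotonicity statement of this kind can be proved, you would still need a quantitative continuity estimate for the reconstruction map in order to pass from uniform control of all $\ell_j^n/n$ to control of $\sigma(i)/n$; the ``classical identity expressing $\frac{1}{n}\sigma^n_t(i)$ as a function of the empirical measure of the pairs $(j/n,\ell_j^n(t)/j)$'' that you invoke is not specified, and I do not know of one. In effect you are proposing to re-derive the permuton limit from scratch via the inversion bijection, which is substantially harder than what is needed.

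The paper avoids this entirely by the following observation: the inversion count $\ell_i(\sigma)$ is itself an empirical count in a rectangle, namely $\ell_i(\sigma)=\bigl|\{j\le i:\sigma(j)\ge\sigma(i)\}\bigr|=n\,\Delta_{[0,i/n]\times[\sigma(i)/n,1]}(\sigma)$. The Starr--Walters large deviation principle (Corollary~\ref{cor:ldp}) then gives directly that $\frac{1}{n}\ell_i^n(e^{t/n})\approx F_{i/n}\bigl(X_i^n(t)\bigr)$ where $F_x(z)=\int_z^1\int_0^x\rho_t(x',y')\,dx'\,dy'$; since $\rho_t$ is bounded below, $F_x$ has a Lipschitz inverse with constant $\sim 1/x$, which is where the restriction $i>\alpha n$ enters. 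Thus the only ``reconstruction'' needed is inverting a one-dimensional function, and the permuton density handles the joint dependence for you---there is no need to control all $\ell_j^n$ simultaneously or to analyze the bijection.

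A second point you do not address: the paper first works on $[0,T]$ and then extends to $[-T,0]$ by a symmetry argument, observing that $\rev_n\circ\sigma^n_{1/t}$ (after passing to a c\`adl\`ag modification) is again a regular Markov Mallows process and hence equal in law to $\sigma^n$ by Corsini's uniqueness theorem, combined with the identity $1-z_{x,1-a}(-t)=z_{x,a}(t)$. Your proposal treats $[-T,T]$ symmetrically throughout, which is plausible for the fluid limit but would require the LDP input at negative $\beta$ as well.
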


The theorem can be interpreted in the following way -- apart possibly from elements which start close to the boundary points $0$ or $1$, with high probability each element $x = \frac{i}{n}$ is assigned at random its desired position $a = X_{i}^{n}(0)$ at time $0$ and then approximately follows a deterministic trajectory $z_{x,a}(t)$ depending only on $x$ and $a$. In the original Mallows process without the $t \mapsto e^{t/n}$ time rescaling this would correspond to starting at position $x$ at time $0$ and reaching position $a$ at time $1$. Results of simulations illustrating the behavior of the Mallows process for $n=200$ along with the limiting trajectories $z_{x,a}$ are shown in Figure \ref{fig:graph1} and Figure \ref{fig:graph2}.

In fact, we expect the theorem to hold with maximum taken over all elements $i \in [n]$, not necessarily bounded away from $1$ and $n$, but our proof stops short of showing that.

\begin{figure}[!h]
\captionsetup[subfigure]{labelformat=empty}
	\centering
	\begin{subfigure}[t]{0.4\textwidth}
	    \includegraphics[width=\linewidth]{./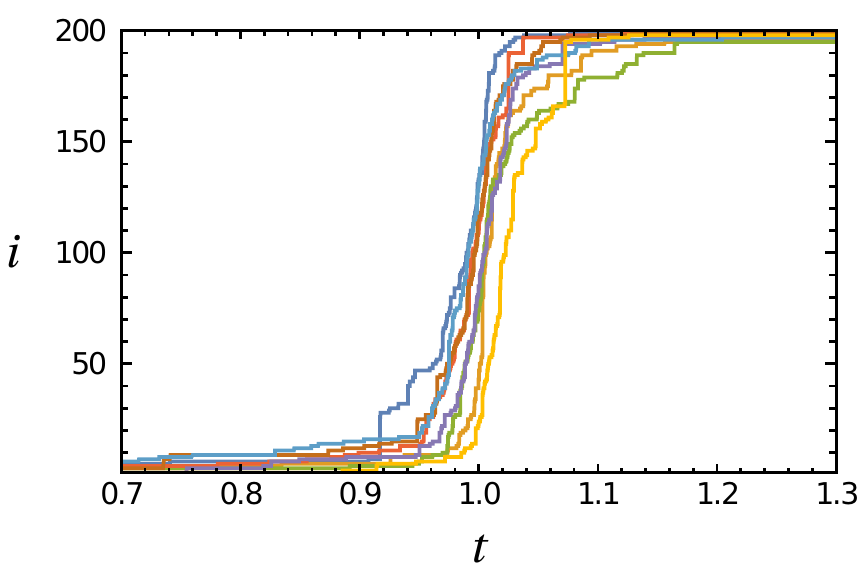}
        \subcaption{$i=1$}
	\end{subfigure}
	\hspace{1pc}
    \vspace{0.5pc}
	\begin{subfigure}[t]{0.4\textwidth}
	    \includegraphics[width=\linewidth]{./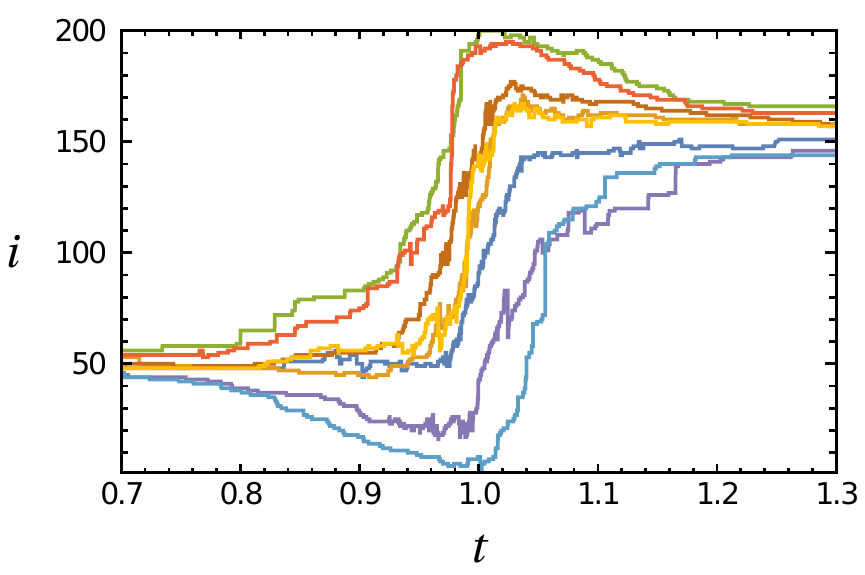}
        \subcaption{$i=50$}
	\end{subfigure}
	\begin{subfigure}[t]{0.4\textwidth}
	    \includegraphics[width=\linewidth]{./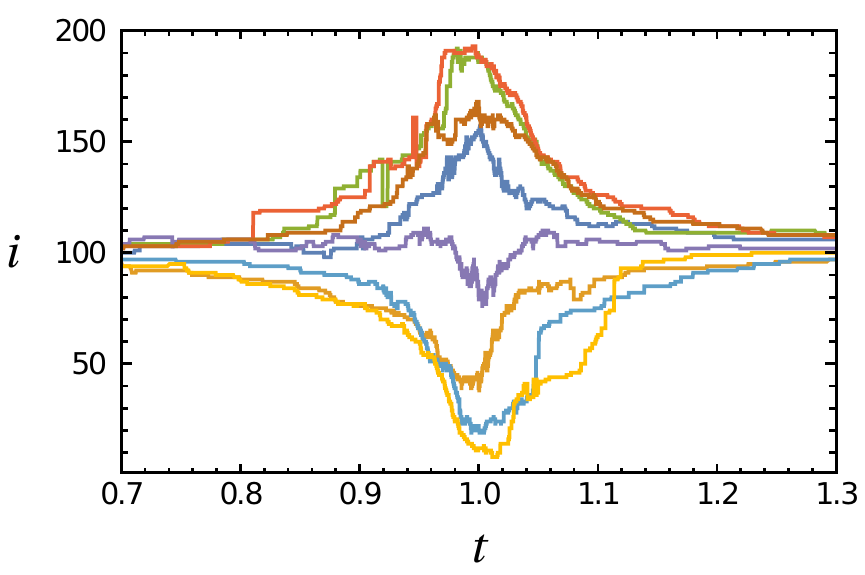}
        \subcaption{$i=100$}
	\end{subfigure}
	\hspace{1pc}
    \vspace{0.5pc}
	\begin{subfigure}[t]{0.4\textwidth}
	    \includegraphics[width=\linewidth]{./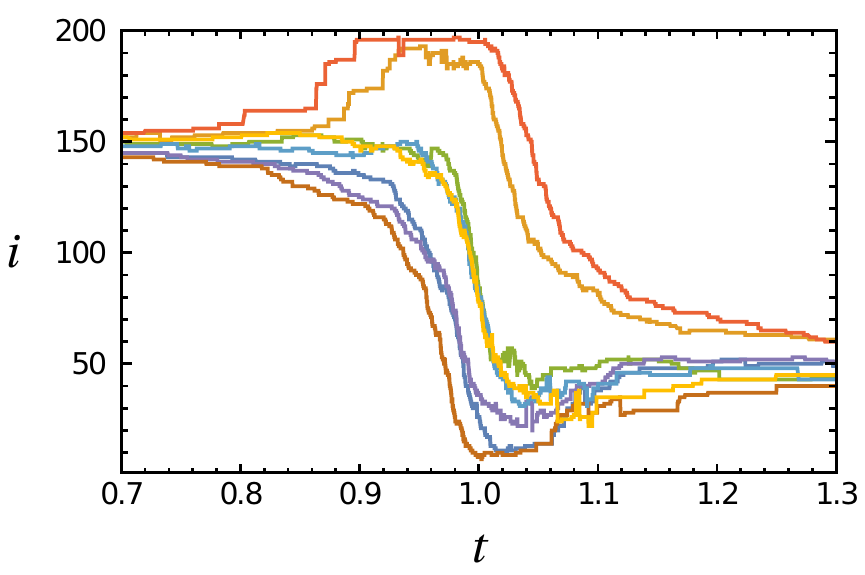}
        \subcaption{$i=150$}
	\end{subfigure}
    \begin{subfigure}[t]{0.4\textwidth}
	    \includegraphics[width=\linewidth]{./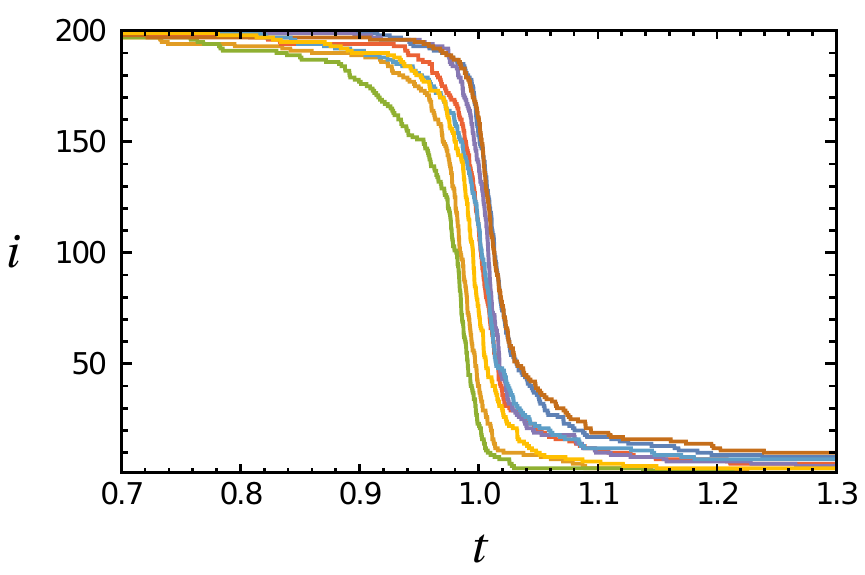}
        \subcaption{$i=200$}
	\end{subfigure}
	\caption{Trajectories $\sigma^{n}_{t}(i)$, with $n=200$, of elements $i=1,50,100,150,200$ for $t \in [0.7,1.3]$ across different realizations of the process.}
\label{fig:graph1}
\end{figure}

\begin{figure}[!h]
\captionsetup[subfigure]{labelformat=empty}
	\centering
	\begin{subfigure}[t]{0.4\textwidth}
	    \includegraphics[width=\linewidth]{./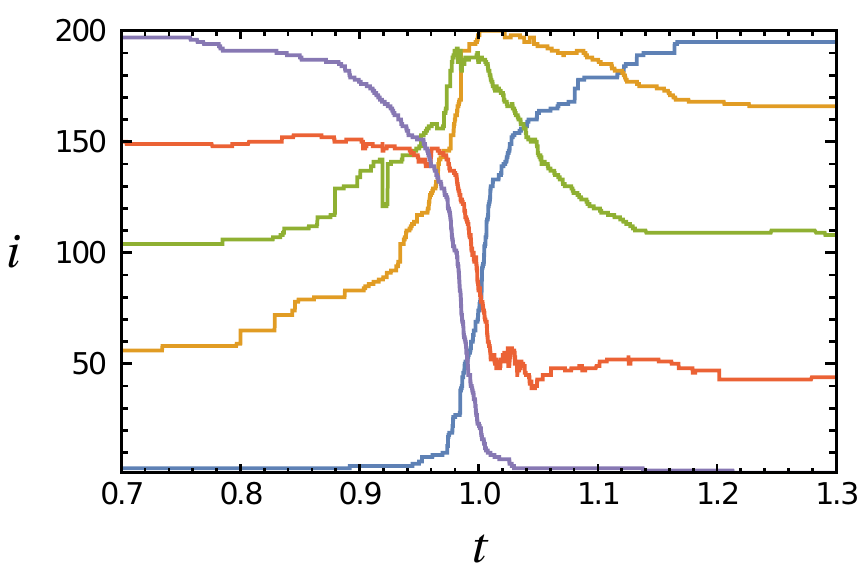}
	\end{subfigure}
    \hspace{1cm}
    \begin{subfigure}[t]{0.4\textwidth}
	    \includegraphics[width=\linewidth]{./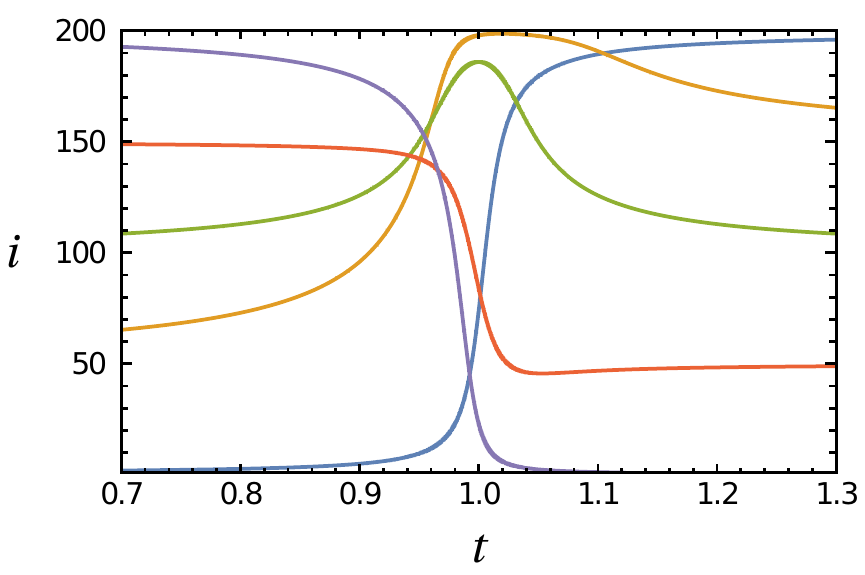}
	\end{subfigure}
	\caption{Trajectories $\sigma^{n}_{t}(i)$, with $n=200$, of elements $i=1,50,100,150,200$ for $t \in [0.7,1.3]$ in the same realization of the process (left) together with the corresponding limiting trajectories $z_{x,a}$ (right)}
\label{fig:graph2}
\end{figure}

Theorem \ref{th:main-theorem-global} implies the following corollary about the behavior of a randomly chosen element. Let $\left(Y^{n}\right)_{t \in [-T,T]}$ be the \emph{random particle process} defined by choosing $i \in [n]$ uniformly at random and then following the trajectory $\left(X_{i}^{n}(t)\right)_{t \in [-T,T]}$. Note that for $t=0$ we have $\sigma_{e^{t/n}}^{n} = \sigma_{1}^{n}$, which is uniformly distributed on $\Ss_n$ (as it corresponds to the Mallows distribution with parameter $1$), so each $X_{i}^{n}(0)$ has uniform distribution on $\{1/n,2/n,\ldots,n/n\}$. Since $\alpha$ in Theorem \ref{th:main-theorem-global} above can be arbitrarily small, we obtain the following

\begin{corollary}\label{cor:main-result-random-particle}
For any $T > 0$ let $Y = \left(Y^{n}\right)_{t \in [-T,T]}$ be the random particle process, regarded as a random path in the Skorokhod space $D\left([-T,T], [0,1]\right)$. Then we have the following convergence in distribution as $n \to \infty$:
\[
Y^{n} \stackrel{d}{\rightarrow} Z,
\]
where $Z = \left(z_{X,A}(t)\right)_{t \in [-T,T]}$ with $X$, $A$ independent and distributed uniformly on $[0,1]$.
\end{corollary}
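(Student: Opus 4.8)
\emph{Proof proposal.} The random particle process $Y^n$ is obtained by sampling an index $I$ uniformly from $[n]$, independently of the process, and setting $Y^n=\big(X_I^n(t)\big)_{t\in[-T,T]}$. The plan is to approximate $Y^n$ by the random path $\tilde Y^n:=\big(z_{I/n,\,X_I^n(0)}(t)\big)_{t\in[-T,T]}$, to identify the distributional limit of $\tilde Y^n$ via the continuous mapping theorem, and to show that $Y^n$ and $\tilde Y^n$ are uniformly close with probability tending to $1$, so that the two processes have the same limit.

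\emph{Limit of the initial data and the continuous mapping step.} Since $\sigma_1^n=\sigma_{e^{0/n}}^n$ has the Mallows distribution $\pi_{n,1}$, which is uniform on $\Ss_n$, and $I$ is independent of it, one has $\pr{I=a,\ \sigma_1^n(I)=b}=\frac1n\cdot\frac1n$ for all $a,b\in[n]$; hence $\big(I/n,\,X_I^n(0)\big)=\big(I/n,\,\tfrac1n\sigma_1^n(I)\big)$ is uniform on the grid $\{1/n,\dots,1\}^2$ and converges in distribution to $(X,A)$ with $X,A$ independent uniform on $[0,1]$. Next I would check, directly from \eqref{eq:formula-for-global-limit}, that $(x,a,t)\mapsto z_{x,a}(t)$ is jointly continuous on $[0,1]^2\times\R$: for $t\neq0$ this is immediate since the argument of the logarithm is a ratio of strictly positive smooth functions, and at $t=0$ a Taylor expansion of the numerator and denominator around $t=0$ gives $\frac1t\log(\cdots)\to a$, consistent with the convention $z_{x,a}(0)=a$; I would also record that $z_{x,a}(\cdot)$ stays in $[0,1]$. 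By compactness of $[0,1]^2\times[-T,T]$ the map $g:(x,a)\mapsto\big(z_{x,a}(t)\big)_{t\in[-T,T]}$ is then continuous from $[0,1]^2$ into $C([-T,T],[0,1])$ with the uniform norm, so by the continuous mapping theorem $\tilde Y^n=g\big(I/n,\,X_I^n(0)\big)$ converges in distribution to $Z=\big(z_{X,A}(t)\big)_{t\in[-T,T]}$, viewed as an element of $D([-T,T],[0,1])$.

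\emph{Closeness of $Y^n$ and $\tilde Y^n$.} Fix $\varepsilon>0$ and $\alpha>0$. The event appearing in Theorem~\ref{th:main-theorem-global} is measurable with respect to the process alone and does not involve $I$; on its complement and on $\{I\in(\alpha n,(1-\alpha)n)\}$ the bound of Theorem~\ref{th:main-theorem-global} applied with $i=I$ yields $\sup_{t\in[-T,T]}\abs{Y^n(t)-\tilde Y^n(t)}\le\varepsilon$. Hence
\begin{align*}
\pr{\sup_{t\in[-T,T]}\abs{Y^n(t)-\tilde Y^n(t)}>\varepsilon}
&\le \pr{I\notin(\alpha n,(1-\alpha)n)}\\
&\quad{}+\pr{\max_{i\in(\alpha n,(1-\alpha)n)}\ \sup_{t\in[-T,T]}\abs{X_i^n(t)-z_{i/n,\,X_i^n(0)}(t)}>\varepsilon}.
\end{align*}
The second term tends to $0$ by Theorem~\ref{th:main-theorem-global}, while the first tends to $2\alpha$; since $\alpha>0$ is arbitrary, $\sup_{t\in[-T,T]}\abs{Y^n(t)-\tilde Y^n(t)}\to0$ in probability, and a fortiori the Skorokhod distance between $Y^n$ and $\tilde Y^n$ tends to $0$ in probability.

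\emph{Conclusion and main obstacle.} Combining the previous two steps with the standard converging-together lemma (if $d(\xi_n,\eta_n)\to0$ in probability and $\eta_n\stackrel{d}{\to}\zeta$ then $\xi_n\stackrel{d}{\to}\zeta$) gives $Y^n\stackrel{d}{\to}Z$ in $D([-T,T],[0,1])$. I expect the only point requiring real care, rather than bookkeeping on top of Theorem~\ref{th:main-theorem-global}, to be the joint continuity of $z_{x,a}(t)$ on $[0,1]^2\times\R$, and in particular its continuous extension to $t=0$ together with the verification that it takes values in $[0,1]$; the convergence of the initial data and the union bound in the third step are routine.
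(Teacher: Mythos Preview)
Your proposal is correct and follows exactly the route the paper sketches: the paper does not give a full proof of the corollary but simply observes that $X_i^n(0)$ is uniform on $\{1/n,\ldots,1\}$ and that $\alpha$ in Theorem~\ref{th:main-theorem-global} can be taken arbitrarily small, leaving the details implicit. Your argument supplies precisely those details --- the joint law of $(I/n,X_I^n(0))$, the continuous mapping step via joint continuity of $z_{x,a}(t)$ (for which the paper provides Lemma~\ref{le:equicontinuity-of-z} on $[0,T]$, the extension to $[-T,T]$ being routine), and the converging-together lemma --- so there is nothing to correct or contrast.
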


\paragraph{Local limit.} We now state the result about the second kind of limit of the Mallows process, the local limit. Here, for reasons that will be apparent in a moment, we need to restrict our attention to times $t \in [0,1)$.

Let $\GSs$ denote the group of all permutations of $\Z$, i.e., all bijections of $\Z$ with itself. A permutation $\sigma \in \GSs$ will be called \emph{balanced} if
\[
\left| \left\{ i \leq 0 \, : \, \sigma(i) \geq 1 \right\} \right| = \left| \left\{ i \geq 1 \, : \, \sigma(i) \leq 0 \right\} \right| < \infty.
\]
The set of all balanced permutations will be denoted by $\GSs^{bal}$.

For $q \in [0,1)$ Gnedin and Olshanski (\cite{gnedin-olshanskii}) constructed a measure $\Pi_q$ on $\GSs^{bal}$ which serves as an analog of the Mallows distribution $\pi_{n,q}$ on $\Ss_n$. This measure, whose existence underlies our local limit results, is discussed in more detail in Section \ref{sec:local-limit}. By a slight abuse of language we will call $\Pi_q$ the \emph{Mallows measure on $\Z$ with parameter $q$}.

An $\GSs$-valued path $\left( \Sigma_t \right)_{t \in [0,1)}$ will be called a \emph{permutation path} if for every $i \in \Z$ the trajectory $\left( \Sigma_{t}(i) \right)_{t \in [0,1)}$ is c\`{a}dl\`{a}g. We say that a sequence of permutation paths $\left( \Sigma^{n}_{t} \right)_{t \in [0,1)}$ converges to a permutation path $\left( \Sigma_{t} \right)_{t \in [0,1)}$ if for every $i \in \Z$ we have $\left( \Sigma^{n}_{t}(i) \right)_{t \in [0,1)} \to \left( \Sigma_{t}(i) \right)_{t \in [0,1)}$ as $n \to \infty$, where the latter convergence is convergence of paths in the Skorokhod space. A random permutation path will be called a \emph{permutation process} on $\Z$.

Let $\DD = D([0,1),\GSs^{bal})$ be the space of all c\`{a}dl\`{a}g functions on $[0,1)$ with values in $\GSs^{bal}$. In other words, $\DD$ consists of permutation paths whose values are balanced permutations. We provide the details concerning this space in the Appendix. Note that convergence in $\DD$ implies convergence of permutation paths in the sense introduced above.

Given the Mallows process $\left( \sigma^{n}_t \right)_{t \in [0,1)}$ and any sequence $k_n \in \Z$, we define the following shifted process
\begin{equation}\label{eq:shifted-process-n}
\Sigma^{n}_{t}(i) = \sigma^{n}_{t}\left( k_n + i \right) - k_n,
\end{equation}
where $i \in \Z$, $t \in [0,1)$ and we take $\Sigma^{n}_{t}(i) = i$ if $k_n + i \notin [n]$. In this way $\left( \Sigma^{n}_{t} \right)_{t \in [0,1)}$ can be considered as a permutation process on $\Z$ for any value of $n$ (in fact, a process of balanced permutations).

Our main result here is the following
\begin{theorem}\label{th:main-theorem-local}
There exists a permutation process $\left( \Sigma_{t} \right)_{t \in [0,1)}$ such that for any sequence $k_n \in \N$ with $k_n,n-k_n \to \infty$ as $n \to \infty$, we have the following convergence in distribution
\[
\left( \Sigma^{n}_{t} \right)_{t \in [0,1)} \stackrel{d}{\rightarrow} \left( \Sigma_{t} \right)_{t \in [0,1)}
\]
in the Skorokhod topology on $\DD$. The marginal distribution of $\Sigma_t$ at time $t$ is given by the Mallows measure $\Pi_t$ on $\Z$ with parameter $t$.
\end{theorem}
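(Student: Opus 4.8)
The plan is to construct the limit process $(\Sigma_t)_{t\in[0,1)}$ directly from a family of independent Markov birth processes indexed by $\Z$, by taking a (distributional) limit of the finite-$n$ left-inversion processes, and then to verify convergence in $\DD$. Recall that for $\sigma^n_t\sim\pi_{n,t}$ the left-inversion numbers $\ell_i(\sigma^n_t)$ are independent, with $\ell_i$ following the truncated geometric law on $\{0,\dots,i-1\}$ with parameter $1-t$; under the shift \eqref{eq:shifted-process-n}, the relevant index $i$ becomes $k_n+i$, and since $k_n\to\infty$ the truncation disappears in the limit: $\ell_{k_n+i}(\sigma^n_t)$ converges in distribution to a genuine geometric random variable $G_t$ with $\Pp(G_t=j)=(1-t)t^j$. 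The first step is therefore to introduce, for each $i\in\Z$, the limiting time-inhomogeneous birth process $(\lambda_i(t))_{t\in[0,1)}$ on $\N_0$ obtained as the $i\to\infty$ limit of the rates $p_i(j,t)$ from \eqref{eq:def-of-rates}: as $i\to\infty$ with $t\in(0,1)$ fixed, $\frac{i t^{i-j-1}(t^{j+1}-1)}{t^i-1}\to 0$, so the limiting jump rate from $j$ to $j+1$ is simply $\frac{j+1}{1-t}$, and at $t=1$ we never reach $t=1$ so no special case is needed. One must check (a one-line computation with the forward equations, or a coupling with a Yule-type process) that this birth process started at $0$ at time $0$ has marginal $G_t$ at each $t\in[0,1)$ and is non-explosive on $[0,1)$; non-explosion follows because the expected number of jumps up to time $t$ equals $\E G_t = t/(1-t)<\infty$.

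Second, I would define $\Sigma_t$ as the permutation of $\Z$ whose ``left-inversion vector'' is $(\lambda_i(t))_{i\in\Z}$, using the Gnedin--Olshanski construction: given independent geometric variables, the bijection of $\Z$ is built by the same recursive insertion procedure as in the finite case, reading from $+\infty$ downward, and Gnedin--Olshanski's result (invoked from Section~\ref{sec:local-limit}) guarantees that this produces a well-defined element of $\GSs^{bal}$ almost surely with law $\Pi_t$. The subtle point is that we need this to work \emph{simultaneously for all $t$}, i.e.\ that $t\mapsto\Sigma_t$ is a well-defined path in $\DD$: since each $\lambda_i(\cdot)$ is càdlàg and piecewise constant, and since for each fixed $t$ only finitely many coordinates are ``active'' in determining $\Sigma_t(i)$ for a given $i$ (a consequence of balancedness and the finiteness of $\sum_i$ of the relevant indicators, which should be controlled by a Borel--Cantelli argument using $\Pp(\lambda_i(t)\ge 1)=\Pp(G_t\ge1)=t$ together with the geometric tail), the map $t\mapsto\Sigma_t(i)$ inherits the càdlàg property and the path lies in $\DD$. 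I would record this as a preliminary lemma, relying on the Appendix's description of the topology on $\DD$.

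Third comes the actual convergence $(\Sigma^n_t)\stackrel{d}{\to}(\Sigma_t)$. The natural route is coupling: put all the $\ell_{k_n+i}^n(\cdot)$ and the $\lambda_i(\cdot)$ on a common probability space so that, for each fixed $i$, the finite process and the limit process agree up to a random time close to $1$ with probability tending to $1$. Concretely, one couples the truncated-geometric birth process $(\ell^n_{k_n+i}(t))$ of \eqref{eq:def-of-rates} with the Yule-type process $(\lambda_i(t))$ so that they coincide until $\lambda_i$ first exceeds $k_n+i-1$ (which, for $t$ bounded away from $1$, happens with vanishing probability as $n\to\infty$), and similarly handles the right boundary using $n-k_n\to\infty$; the rate convergence $p_{k_n+i}(j,t)\to (j+1)/(1-t)$ uniformly for $t$ in compact subsets of $[0,1)$ and $j$ in a bounded range makes this coupling error quantitatively small. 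From coordinatewise convergence of the inversion processes one deduces, via the continuity of the reconstruction map (permutation from inversion vector) and the structure of $\DD$ explained in the Appendix, convergence of $(\Sigma^n_t)$ to $(\Sigma_t)$ in the Skorokhod topology on $\DD$; the identification of the time-$t$ marginal as $\Pi_t$ is then immediate since $\Sigma^n_t\sim\pi_{n,t}$ shifted, which converges to $\Pi_t$ by Gnedin--Olshanski.

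The main obstacle I anticipate is \emph{tightness / path-space convergence in $\DD$ rather than mere coordinatewise convergence}. Coordinatewise convergence of càdlàg processes is comparatively soft, but upgrading to convergence in the Skorokhod topology on the space of balanced-permutation-valued paths requires controlling how the ``active window'' around index $i$ grows, uniformly enough that the balancedness constraint is stable under the limit and that no mass escapes to infinity along the time axis as $t\uparrow 1$. This is where the explosion as $t\to 1$ bites: one must phrase everything on $[0,1)$ and show that on each $[0,1-\delta]$ the processes are tight with good control on the modulus of continuity of each coordinate and on the finite-range dependence structure (which coordinates can affect $\Sigma_t(i)$). I expect this to be handled by a combination of the Borel--Cantelli bound above, a maximal inequality for the birth processes, and the abstract characterization of compact sets in $\DD$ deferred to the Appendix.
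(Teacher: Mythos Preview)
Your overall strategy matches the paper's: construct the limiting process from independent birth processes $\ell_i$ with rate $(j+1)/(1-t)$, use the Gnedin--Olshanski reconstruction to obtain $\Sigma_t$, and couple the finite-$n$ inversion processes to the limiting ones. The identification of the limiting rate and of the geometric marginals is correct.

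However, there are two genuine gaps.

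First, your proposed coupling (``coincide until $\lambda_i$ first exceeds $k_n+i-1$'') is not the right mechanism. The rates $p_{k_n+i}(j,t)$ and $q(j,t)=(j+1)/(1-t)$ differ for \emph{every} $j$, not just near the truncation boundary: the correction term $\frac{(k_n+i)\,t^{k_n+i-j-1}(t^{j+1}-1)}{t^{k_n+i}-1}$ is nonzero throughout. So the two birth processes do not coincide up to truncation under any naive coupling. What the paper does instead is a \emph{dependent thinning}: since $p_{k_n+i}(j,t)\le q(l,t)$ whenever $j\le l$, one realises $\ell^n_{k_n+i}$ by keeping each jump of $\ell_i$ with conditional probability $p_{k_n+i}(X,T_{i,j})/q(\ell_i(T_{i,j}-),T_{i,j})$ (where $X$ is the current value of the thinned process), implemented via auxiliary i.i.d.\ uniforms $U_{i,j}$. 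One must then verify that the thinned process really is a Markov birth process with rates $p_{k_n+i}$, which requires a short martingale/intensity argument with two filtrations. The payoff is that, because $p_{k_n+i}(j,t)/q(j,t)\to 1$ uniformly on compact subsets of $[0,1)$ and for $j$ in any bounded range, for $n$ large enough \emph{no jumps are rejected} at the finitely many indices that matter, so $\ell^n_{k_n+i}(\cdot)$ and $\ell_i(\cdot)$ agree \emph{exactly} on $[0,T]$.

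Second, and as a consequence: the tightness issue you flag as the main obstacle is entirely bypassed. Once the inversion processes agree exactly on $[0,T]$ over a finite window of indices, and once one knows that $\Sigma_t(i)$ depends only on $\ell_i,\ldots,\ell_{i_2+1}$ for some random $i_2$, one obtains $\Sigma^n_t(i)=\Sigma_t(i)$ for all $t\in[0,T]$ and $n$ large. This yields almost sure convergence in $\DD$ (and even total variation convergence on finite windows), with no modulus-of-continuity or compactness estimate required.

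Finally, your Borel--Cantelli sketch for the finite-range dependence is not correct as stated: $\Pp(\ell_i(T)\ge 1)=T$ is not summable, so the first Borel--Cantelli lemma does not apply. The argument that works (and is used both for the c\`{a}dl\`{a}g property of $\Sigma$ and for the convergence) is: pick $i_1\ge i$ with $\ell_{i_1}(T)=0$ (such $i_1$ exist a.s.\ since $\Pp(\ell_{i_1}(T)=0)=1-T>0$ and the $\ell_i(T)$ are independent), then use the monotonicity $\ell_j(t)\le\ell_j(T)$ together with the recursion \eqref{eq:recursive-formula-ljt} to establish the comparison $\ell_i^{(j)}(t)\ge\ell_{i_1}^{(j)}(T)$ for all $j\ge i_1$ and $t\le T$. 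Finiteness of $r_{i_1}(T)$ then produces an index $i_2$ beyond which no further $\ell_j$ contributes to $r_i(t)$ for any $t\le T$; in other words, there is a deterministic function $f$ with $r_i(t)=f(\ell_i(t),\ldots,\ell_{i_2+1}(t))$ for all $t\le T$. This comparison step is the key technical point missing from your outline.
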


\begin{remark}\label{re:total-variation}
In fact our proof implies that for any $T \in [0,1)$ and $N \in \N$, the process $(\Sigma^n_t(i))_{i \in \{-N,\ldots,N\},t\in [0,T]}$ seen as an element of $D([0,T],\Z^{2N+1})$ converges in the total variation distance to $(\Sigma_t(i))_{i \in \{-N,\ldots,N\},t\in [0,T]}$.
\end{remark}

The intuitive picture behind Theorem \ref{th:main-theorem-local} is as follows. We fix a spatial location $x \in (0,1)$ together with a sequence $k_n$ satisfying $\frac{k_n}{n} \to x$ (this is a special case of the theorem) and observe the behavior of the Mallows process $\sigma^{n}_t$ in a window of constant size around $\lfloor xn \rfloor$, and then take $n \to \infty$. Note in particular that the distribution of the limiting process does not depend on $x$, i.e., is the same for all locations as long as we are not close to the boundary. For locations near $0$ or $1$, i.e., when either $k_n$ or $n - k_n$ does not go to infinity, one should consider processes of permutations of $\N$ rather than of $\Z$ and change all the statements accordingly, but this case is actually easier to handle. We discuss this briefly in Section \ref{sec:local-limit}.

The limiting process $\left(\Sigma_{t}\right)_{t \in [0,1)}$ admits a description in terms of a family of independent Markov jump processes and it is a transposition process. This is discussed in more detail in Section \ref{sec:local-limit}.

The fact that $\Sigma_t$ is distributed according to the Mallows measure on $\Z$ with parameter $t$ explains why we have to limit ourselves to times $t \in [0,1)$. Indeed, the Mallows measure on $\Z$ with parameter $t=1$ would correspond to the uniform distribution over permutations of $\Z$, which does not exist. This is reflected in the fact that the processes of inversions used to define $\left(\Sigma_t\right)_{t \in [0,1)}$ explode at $t=1$, as mentioned in earlier.

\end{subsection}

\begin{subsection}{Overview of the proofs and further questions}

\paragraph{Overview of the proofs.} We briefly describe the techniques used in the proofs of our main results.

To prove Theorem \ref{th:main-theorem-global}, we first derive the so-called fluid limit (see \cite{norris}) of the inversion processes $\ell_{i}(\sigma_{e^{t/n}}^{n})$, which consists in approximating the trajectory of a Markov process by a solution to a corresponding ODE. In particular, the inversion number $\ell_{i}(\sigma_{e^{t/n}}^{n})$ at any time $t$ is essentially determined by the inversion number at time $t=0$. Next, as the permuton limit of the Mallows permutations is deterministic and explicitly known (thanks to \cite{starr}), for a given element with initial position $x$ its inversion number and its position at time $t=0$ determine each other up to small error. The same is true for any other time $t$ and since the inversion process is approximated by an ODE, this proves that the trajectory $X_{i}^{n}(t)$ itself is almost deterministic. Along the way we employ a large deviation principle satisfied by Mallows permutations, which enables us to work separately with each time point from a sufficiently fine mesh of $[0,T]$. At the end we pass from $[0,T]$ to $[-T,T]$ by a symmetry argument.

As for Theorem \ref{th:main-theorem-local}, we first construct the limiting process $\Sigma_t$ in terms of a family of independent (time-inhomogeneous) birth processes $\ell_i(t)$, representing the left inversion numbers. This is analogous to what is done for the Mallows process $\sigma^{n}_{t}$ on $\Ss_n$, but more involved, since reconstructing a permutation of $\Z$ from its inversion numbers is not straightforward. Then we construct coupled copies of all the processes $\sigma^{n}_{t}$ and the limiting process by means of a dependent thinning procedure, in which successive jumps of the $\ell_i(t)$'s are rejected with appropriate probability. As the jump rates given by \eqref{eq:def-of-rates} approximate the jump rates of inversions of the limiting process, we are able to show that almost surely on any fixed interval $\{-N, \ldots, N\}$ the processes $\Sigma^{n}_{t}$ and $\Sigma_t$ agree for large $n$ enough.

\paragraph{Further questions.} We end the introductory section with a few questions and ideas for further research which might lead to additional insight into the Mallows process and permutation-valued processes in general.
\begin{itemize}
\item The Archimedean process, which appears as the limit of a random sorting network (\cite{MR4467309}), admits an important variational characterization. This was actually noticed by Brenier (\cite{brenier}) in the study of incompressible flows long before the random sorting network conjecture was formulated. Let $(X_t)_{t \in [0,1]}$ be a continuous process with values in $[0,1]$ whose marginal $X_t$ for any $t$ is uniform on $[0,1]$. Such processes are known as permuton processes (see \cite{balint-mustazee}) and they arise naturally as limits of permutation sequences. For a permuton process $X$ we define its energy as
\[
\mathcal{E}(X) = \frac{1}{2} \E \int\limits_{0}^{1} |\dot{X}_{t}|^2 \, dt.
\]
Brenier proved (see also \cite{balint-mustazee}) that the Archimedean process is the unique minimizer of energy among all permuton processes satisfying $X_0 = 1 - X_1$. This constraint is analogous to the requirement that a permutation sequence connect the identity permutation to the reverse permutation. Since in the Archimedean process the trajectories are sine curves, they satisfy the harmonic oscillator equation
\[
\ddot{X}_t = - X_t.
\]
This is the Euler-Lagrange equation for the action $X \mapsto \int\limits_{0}^{1} \left( \frac{1}{2}|\dot{X}_t|^2 - p(X_t) \right) dt$, where $p(x) = \frac{1}{2} x^2$ is the pressure function.

Is it possible to find a similar variational characterization for the global limit of the Mallows process from Theorem \ref{th:main-theorem-global}? Since the limiting trajectories $z_{x,a}(t)$ depend on two parameters, this strongly suggests that they should satisfy a second-order Euler-Lagrange equation like the trajectories of the Archimedean process, perhaps with the action being explicitly time-dependent.

\item To understand the asymptotic behavior of processes of permutations of $\Z$, it is natural to study the \emph{speed distribution} of elements. For a variety of interacting particle systems, such as TASEP (\cite{tasep}), ASEP (\cite{aggarwal}) or the local limit of a random sorting network (\cite{random-sorting-networks-local}), it is known that if $X_{t}(i)$ is the position of a given particle at time $i$, then the limit
\[
\lim\limits_{t \to \infty} \frac{X_{t}(i) - i}{t}
\]
exists almost surely and one can describe the limiting distribution. Thus one can think of $i$ as `choosing' its speed at random and moving along the associated trajectory as $t \to \infty$. Does a similar limit exist for the local limit of the Mallows process from Theorem \ref{th:main-theorem-local}? Here one should consider $t \to 1$ and the known results about the typical displacement $\sigma_{t}(i) - i$ (see, e.g., \cite[Theorem 1.1]{bhatnagar-peled} and \cite[Remark 5.2]{gnedin-olshanskii}) suggest that the right normalization is $\frac{1}{1-t}$ in the denominator. We note that \cite[Theorem 5.1]{gnedin-olshanskii} provides a series representation for the time marginal of the displacement, which hints at a possible approach to work out the limiting speed distribution.

\item It would be interesting to obtain a more detailed description of the jump distribution of the local limit of the Mallows process. For example, what is the distribution of the displacement $\Sigma_t(i) - i$ at the first time $i$ is transposed with some other element? More generally, is it possible to describe the transpositions of $\Sigma_t$ by means of an explicit point process, as was done for the local limit of random sorting networks in \cite{gorin-rahman}? Some results on the distribution of early jump times for Mallows processes on $\Ss_n$ were proved in \cite{corsini}.
\item In this work, following \cite{corsini}, we have defined the Mallows distribution $\pi_{n,q}$ on $\Ss_n$ in terms of left inversions $\ell_i$. We could just as well have chosen to work with right inversions $r_i$ (as, e.g., is done in \cite{gnedin-olshanskii} for permutations of $\Z$), since both choices yield the same Mallows distribution for fixed parameter $q$. Note, however, that the corresponding Mallows processes are not the same, as can be easily checked on explicit examples for small $n$. An interesting question would be to construct a Mallows process still satisfying $\Inv(\sigma_{t}^{n}) \leq \Inv(\sigma_{t'}^{n})$ for $t < t'$ and $\Inv(\sigma_{t}^{n}) \leq \Inv(\sigma_{t-}^{n}) + 1$, but which would be at the same time `canonical', in the sense of not distinguishing between left and right inversions in its definition.
\end{itemize}

\paragraph{Organization of the paper.} Section \ref{sec:global-limit} is devoted to the proof of Theorem \ref{th:main-theorem-global}, with Subsection \ref{subsec:permutons} introducing the permuton limits and the large deviation principle for Mallows permutations, Subsection \ref{subsec:inversions} deriving the ODE approximation for the evolution of inversions and Subsection \ref{subsec:global-proof} containing the main proof. In Section \ref{sec:local-limit} we prove Theorem \ref{th:main-theorem-local}. Subsection \ref{subsec:mallows-on-z} introduces the Mallows distribution on $\Z$ and Subsection \ref{subsec:construction-of-limit} contains the construction of the limiting process $\Sigma_t$. Finally, Subsection \ref{sec:proof-of-local-limit} contains the proof of the theorem itself. Several technical statements used in the proofs have been placed in the Appendix.
\end{subsection}
\end{section}

\begin{section}{Global limit}\label{sec:global-limit}

\begin{subsection}{Permutons and large deviations}\label{subsec:permutons}
\paragraph{Permuton limits.} We start by introducing necessary notions related to permutation limits. Let $\MM([0,1]^2)$ denote the space of Borel probability measures on $[0,1]^2$. For a permutation $\sigma \in \Ss_n$ its associated empirical measure is defined as
\[
\mu_{\sigma} = \frac{1}{n} \sum\limits_{i=1}^{n} \delta_{\left( \frac{i}{n}, \frac{\sigma(i)}{n} \right)}.
\]
A \emph{permuton} is a Borel probability measure on $[0,1]^2$ with uniform marginals on each coordinate. We say that a sequence of permutations $\sigma^{n} \in \Ss_n$ converges to a permuton $\mu$ if $\mu_{\sigma^{n}} \to \mu$ weakly. It is readily seen that any weak limit of a sequence $\mu_{\sigma^{n}}$ is necessarily a permuton and conversely, by general theory of permutation limits (\cite{permuton-paper}) any permuton can be realized as a limit of permutations in the above sense. The set of all permutons will be denoted by $\PP \subseteq \MM([0,1]^2)$. Note that $\PP$ is closed in the topology of weak convergence.

For permutons the weak topology is actually equivalent to a stronger topology, the \emph{box topology} (\cite[Lemma 5.3]{permuton-paper}), which will be useful later on. It is given by the following metric $d_{\Box}$ -- for permutons $\mu, \nu$ and random variables $(X_1, Y_1) \sim \mu$, $(X_2, Y_2) \sim \nu$ we define
\begin{equation}\label{eq:box-topology}
d_{\Box}(\mu, \nu) := \sup\limits_{\substack{a < b \in [0,1] \\ c < d \in [0,1]}} \Big| \Pp(X_1 \in [a,b], Y_1 \in [c,d]) - \Pp(X_2 \in [a,b], Y_2 \in [c,d])\Big|.
\end{equation}

For any $\beta \in \R$ let $\Pp_{n, \beta}(\cdot)$ denote the probability measure corresponding to sampling $\sigma^{n} \in \Ss_n$ from the Mallows distribution $\pi_{n,q}$ with parameter $q = e^{\beta/n}$. Thus under $\Pp_{n, \beta}$ the measure $\mu_{\sigma^{n}}$ is a random empirical measure. Starr (\cite{starr})  proved a law of large numbers for $\sigma^{n}$, which translated into the language of permuton limits reads as follows

\begin{theorem}[\cite{starr}, Theorem 1.1]\label{th:starr-lln}
Let $\beta \in \R$ and let $\sigma^{n}$ be sampled according to $\Pp_{n,\beta}$. We have the following convergence in probability:
\[
\mu_{\sigma^{n}} \xrightarrow{\Pp} \mu_{\beta},
\]
where the (deterministic) measure $\mu_{\beta}$ is given by the following density $\rho_{\beta}$ with respect to the Lebesgue measure on $[0,1]^2$:
\begin{align}\label{eq:definition-of-rho-beta}
\rho_{\beta}(x,y) = \frac{(\beta / 2) \sinh(\beta / 2)}{\left( e^{-\beta / 4} \cosh(\beta[x-y]/2) - e^{\beta / 4} \cosh(\beta[x+y-1]/2) \right)^2}.
\end{align}
\end{theorem}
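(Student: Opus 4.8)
The plan is to avoid any statistical--mechanics computation of correlation functions and instead read off the permuton from a \emph{one--dimensional} fluid limit, using the independence of the left inversion numbers together with the bijection between permutations and inversion vectors recalled in the Preliminaries.

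Fix $\beta$ and set $q=e^{\beta/n}$. Sample $\sigma^n\sim\pi_{n,q}$ by revealing $\sigma^n(n),\sigma^n(n-1),\dots,\sigma^n(1)$ in this order: once $\sigma^n(n),\dots,\sigma^n(m+1)$ have been chosen, the set $S_m:=[n]\setminus\{\sigma^n(m+1),\dots,\sigma^n(n)\}$ of unused values has $m$ elements, the variable $\ell_m$ is independent of $S_m$ with $\Pp(\ell_m=j)=q^j/[m]_q$ on $\{0,\dots,m-1\}$ (here $[k]_q:=1+q+\cdots+q^{k-1}$), and $\sigma^n(m)$ is the $(\ell_m+1)$-st largest element of $S_m$. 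Fix $y\in[0,1]$ and put $c_m=c_m(y):=|S_m\cap\{1,\dots,\lfloor yn\rfloor\}|$. Then $c_n=\lfloor yn\rfloor$, at each step $c_{m-1}=c_m-\id\{\sigma^n(m)\le yn\}$, and since $\ell_m$ is independent of $S_m$,
\[
\Pp\big(\sigma^n(m)\le yn\,\big|\,S_m\big)=\Pp\big(\ell_m+1> m-c_m\big)=\frac{q^{\,m-c_m}\,[c_m]_q}{[m]_q}.
\]
Thus, for each fixed $y$, $(c_m)_{m=n,n-1,\dots,0}$ is a genuine time--inhomogeneous Markov chain whose one--step drift is a closed function of $m/n$ and $c_m/n$; this is the feature that makes the argument elementary.

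Now run the standard fluid--limit machinery (see \cite{norris}). Since $[\lfloor un\rfloor]_{e^{\beta/n}}=\frac{n}{\beta}(e^{\beta u}-1)(1+o(1))$ uniformly in $u\in[0,1]$, the drift of $c_m/n$ at $m=\lfloor sn\rfloor$, $c_m=\lfloor Gn\rfloor$, equals $\tfrac1n\cdot\frac{1-e^{-\beta G}}{1-e^{-\beta s}}+O(n^{-2})$, an expression which (for $0\le G\le s$) always lies in $[0,1]$. Controlling the martingale part by Azuma's inequality, using that each step moves $c_m$ by at most $1$, and closing the estimate by Gr\"onwall on $[\delta,1]$, where the drift is Lipschitz in the $G$--variable, one gets for every $\delta\in(0,1)$
\[
\sup_{s\in[\delta,1]}\Big|\tfrac1n\,c_{\lfloor sn\rfloor}(y)-G_s(y)\Big|\xrightarrow[n\to\infty]{\Pp}0,
\]
where $G_{\cdot}(y)$ solves $\partial_s G_s(y)=\frac{1-e^{-\beta G_s(y)}}{1-e^{-\beta s}}$ with $G_1(y)=y$. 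Because $c_m(y)$ is nondecreasing in $y$ and $G_s(y)$ is continuous, this upgrades to convergence uniform in $y$; and for $s<\delta$ both quantities lie in $[0,\delta]$, so the supremum over all of $[0,1]^2$ exceeds that over $[\delta,1]$ by at most $2\delta+o(1)$. Finally, in the reveal process the values occupying positions $\le xn$ are precisely those still present at step $\lfloor xn\rfloor$, i.e.\ the set $S_{\lfloor xn\rfloor}$; hence $\mu_{\sigma^n}([0,x]\times[0,y])=\tfrac1n\,c_{\lfloor xn\rfloor}(y)\xrightarrow{\Pp}G_x(y)$ uniformly over $(x,y)$. Uniform--in--probability convergence of these distribution functions to the continuous limit $G$ forces weak convergence (indeed $d_\Box$--convergence) of $\mu_{\sigma^n}$ in probability to the measure with distribution function $G$, so it only remains to identify that measure with $\mu_\beta$.

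That identification is pure calculus. For fixed $y$ the equation for $g(s)=G_s(y)$ separates under $h=e^{-\beta g}$: one obtains $\frac{d}{ds}\log\frac{h}{1-h}=-\frac{d}{ds}\log(e^{\beta s}-1)$, and with $g(1)=y$,
\[
e^{\beta G_s(y)}=1+\frac{(e^{\beta s}-1)(e^{\beta y}-1)}{e^{\beta}-1}.
\]
Differentiating twice gives $\partial_x\partial_y G_x(y)=\dfrac{\beta(e^{\beta}-1)\,e^{\beta(x+y)}}{\big(e^{\beta}+e^{\beta(x+y)}-e^{\beta x}-e^{\beta y}\big)^2}$, and since $e^{\beta}+e^{\beta(x+y)}-e^{\beta x}-e^{\beta y}=-2e^{\beta/4+\beta(x+y)/2}\big(e^{-\beta/4}\cosh(\beta[x-y]/2)-e^{\beta/4}\cosh(\beta[x+y-1]/2)\big)$, this is exactly $\rho_\beta$ from \eqref{eq:definition-of-rho-beta}; the degenerate case $\beta=0$ (the uniform permutation converging to the uniform permuton) is recovered by letting $\beta\to0$. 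I expect the fluid limit to be the only real work: the drift's Lipschitz constant in $G$ blows up as $s\downarrow0$ (handled by the $[\delta,1]$ split plus the trivial bound near $s=0$), and the passage from fixed $y$ to $y$--uniformity uses the monotonicity remark; everything afterwards is an explicit integration and an algebraic rearrangement.
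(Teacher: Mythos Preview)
The paper does not prove this theorem; it is quoted from \cite{starr} and used as a black box (together with the large deviation refinement from \cite{starr-walters}). So there is no ``paper's own proof'' to compare against. Your argument is an independent proof, and it is correct.

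It is, moreover, genuinely different from Starr's original approach. Starr obtains $\rho_\beta$ by a statistical--mechanics analysis: one computes the thermodynamic free energy of the Mallows model, identifies the two--point correlation via a transfer/partition--function calculation, and then deduces the law of large numbers for $\mu_{\sigma^n}$. Your route bypasses all of this. You exploit the independence of the left inversion numbers to realise, for each fixed height $y$, the count $c_m(y)=|\{i\le m:\sigma^n(i)\le yn\}|$ as a one--dimensional time--inhomogeneous Markov chain with explicit step probabilities $q^{m-c_m}[c_m]_q/[m]_q$, run a fluid limit (Azuma for the martingale, Gr\"onwall on $[\delta,1]$ where the drift is Lipschitz, trivial $[0,\delta]$ bound), upgrade to uniformity in $(x,y)$ via the monotonicity of the empirical CDF and continuity of the limit, and finally integrate the separable ODE $\partial_s G=(1-e^{-\beta G})/(1-e^{-\beta s})$ in closed form to recover $\rho_\beta$ as $\partial_x\partial_y G$. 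Each step checks out, including the algebraic identity $e^\beta+e^{\beta(x+y)}-e^{\beta x}-e^{\beta y}=-2e^{\beta/4+\beta(x+y)/2}\big(e^{-\beta/4}\cosh(\beta[x-y]/2)-e^{\beta/4}\cosh(\beta[x+y-1]/2)\big)$.

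What your approach buys is elementarity and self--containment: no partition--function asymptotics, no variational characterisation, just a one--dimensional chain and a separable ODE. It is also very much in the spirit of the rest of the paper, which proves the global limit (Theorem~\ref{th:main-theorem-global}) by a fluid limit for the inversion counts (Proposition~\ref{prop:evolution-of-inversions}); indeed your $c_m(y)$ is essentially the same object the paper studies there, read at a single time rather than along a trajectory. What Starr's approach buys is a direct thermodynamic interpretation and, as exploited by Starr--Walters, a natural route to the large deviation principle with the explicit rate function~\eqref{eq:rate-function}; your method gives the LLN but does not by itself yield the LDP that the paper needs in Corollary~\ref{cor:ldp}.
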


Note that in \cite{starr} the result was formulated for parameter $1 - \frac{\beta}{n}$ instead of $e^{\beta / n}$, but this does not influence the proof nor result, and we have accounted for the change of $-\beta$ to $\beta$ in the formula above. The density $\rho_{\beta}$ is a continuous function bounded away from $0$ for fixed $\beta$ and for $\beta = 0$ we obtain $\rho_{0}(x,y) \equiv 1$ as expected.

The law of large numbers was later refined by Starr and Walters (\cite{starr-walters}) to a large deviation principle which we discuss below.
\paragraph{Large deviations.} For any $\beta \in \R$ let $\I_{\beta} : \MM([0,1]^2) \to [0,\infty]$ be defined in the following way -- we take $\I_{\beta}(\mu) = + \infty$ if $\mu$ is not a permuton, and otherwise
\begin{equation}\label{eq:rate-function}
\I_{\beta}(\mu) = S(\mu | \lambda^{\otimes 2}) + \beta \EE(\mu) + p(\beta),
\end{equation}
where
\begin{itemize}
\item $S(\mu | \nu) = \int\limits_{[0,1]^2} \log \left(\frac{d\mu}{d\nu}\right) d\mu$ is the relative entropy of $\mu$ with respect to $\nu$,
\item $\lambda^{\otimes 2}$ is the Lebesgue measure on $[0,1]^2$,
\item $\EE : \MM([0,1]^2) \to \R$ is defined as
\[
\EE(\mu) = \frac{1}{2} \int\limits_{[0,1]^2} \int\limits_{[0,1]^2} \id_{\{ (x_1 - x_2)(y_1 - y_2) < 0 \}} \, d\mu(x_1, y_1) \, d\mu(x_2, y_2),
\]
\item $p(\beta) = \int\limits_{0}^{1} \log \left( \frac{1 - e^{-\beta x}}{\beta x} \right) dx$, with $p(0) = 0$.
\end{itemize}
The function $\I_{\beta}(\cdot)$ is lower semi-continuous on $\MM([0,1]^2)$ and has compact sublevel sets (\cite[Lemma 3.1]{starr-walters}), i.e., it is a good rate function in the terminology of the theory of large deviations (we refer the reader to the monographs \cite{MR0793553,MR1619036}). Also, $\I_{\beta}(\mu)$ is continuous as a function of $\beta$ for fixed $\mu$.

The following large deviation principle was obtained by Starr and Walters (see also \cite{MR2391251,MR3476619}). The notions of open and closed sets in its formulation are understood in the sense of weak topology on the space of probability measures.

\begin{theorem}[\cite{starr-walters}, Proposition 3.2]\label{th:starr-walters-ldp}
Fix $\beta \in \R$ and let $\sigma^{n}$ be sampled according to $\Pp_{n,\beta}$. With the notation as above, for any closed set $A \subseteq \MM([0,1]^2)$ we have
\[
\limsup\limits_{n \to \infty} \frac{1}{n} \log \Pp_{n, \beta} \left( \mu_{\sigma^{n}} \in A \right) \leq - \inf\limits_{\mu \in A} \I_{\beta}(\mu)
\]
and for any open set $A \subseteq \MM([0,1]^2)$
\[
\liminf\limits_{n \to \infty} \frac{1}{n} \log \Pp_{n, \beta} \left( \mu_{\sigma^{n}} \in A \right) \geq - \inf\limits_{\mu \in A} \I_{\beta}(\mu).
\]
\end{theorem}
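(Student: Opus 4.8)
This is a Gibbs-measure large deviation principle, and the natural route is to bootstrap from the uniform case $\beta=0$ by exponential tilting. Write $q=e^{\beta/n}$ and note the exact identity $\Inv(\sigma)=n^{2}\,\EE(\mu_{\sigma})$, valid for every $\sigma\in\Ss_{n}$: each inversion pair $i<j$ contributes to both ordered pairs $(i,j)$ and $(j,i)$ in the double integral defining $\EE(\mu_{\sigma})$. Hence, writing $u_{n}$ for the uniform measure on $\Ss_{n}$, for any Borel set $B\subseteq\MM([0,1]^{2})$,
\[
\Pp_{n,\beta}\big(\mu_{\sigma^{n}}\in B\big)=\frac{\E_{u_{n}}\big[\id_{\{\mu_{\sigma^{n}}\in B\}}\,e^{n\beta\,\EE(\mu_{\sigma^{n}})}\big]}{\E_{u_{n}}\big[e^{n\beta\,\EE(\mu_{\sigma^{n}})}\big]},\qquad \E_{u_{n}}\big[e^{n\beta\,\EE(\mu_{\sigma^{n}})}\big]=\frac{\Zz_{n,q}}{n!}.
\]
The plan is: (i) prove the LDP for $\mu_{\sigma^{n}}$ under $u_{n}$ with rate $S(\cdot\,|\,\lambda^{\otimes2})$ on $\PP$ and $+\infty$ off $\PP$ (this is exactly $\I_{0}$, as $p(0)=0$); (ii) transfer it to $\Pp_{n,\beta}$ by Varadhan's lemma, using that $\mu\mapsto\EE(\mu)$ is bounded and continuous on $\PP$ in the box topology \eqref{eq:box-topology}; (iii) identify the additive constant with $p(\beta)$.

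\textbf{Step (i): the uniform LDP.} Fix $k\in\N$ and partition $[0,1]^{2}$ into the $k^{2}$ boxes $R_{ab}=[\tfrac{a-1}{k},\tfrac{a}{k}]\times[\tfrac{b-1}{k},\tfrac{b}{k}]$. The ``type'' of $\sigma$ is the integer matrix $m=(m_{ab})$ with $m_{ab}=n\,\mu_{\sigma}(R_{ab})$; its row and column sums equal $n/k$ (assume $k\mid n$), so types correspond to suitably scaled doubly stochastic matrices. A direct count gives that the number of $\sigma\in\Ss_{n}$ of type $m$ equals $\big((n/k)!\big)^{2k}\big/\prod_{a,b}m_{ab}!$, and Stirling's formula yields $\tfrac1n\log\big(\#\{\sigma:\text{type }m\}/n!\big)=-S_{[k]}(\mu)+o(1)$, where $S_{[k]}(\mu)$ is the relative entropy, with respect to $\lambda^{\otimes2}$, of the coarse-graining of $\mu$ over the $k^{2}$ boxes. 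Since there are only polynomially many types, a union bound gives the LDP upper bound ``at resolution $k$'', while a single type near a prescribed permuton gives the lower bound; letting $k\to\infty$ and using $S_{[k]}(\mu)\uparrow S(\mu\,|\,\lambda^{\otimes2})$ (monotonicity of relative entropy under refinement), together with the fact that admissible types force asymptotically uniform marginals—so only permutons appear as limits—completes step (i). (Alternatively one can invoke the independence of the inversion numbers $\ell_{i}$ and Sanov's theorem for the resulting triangular array, then pass from the empirical measure of the $\ell_{i}$ to $\mu_{\sigma}$ by a contraction; the direct count is cleaner.)

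\textbf{Steps (ii)--(iii): the tilt and the constant.} To apply Varadhan's lemma with $\mu\mapsto\beta\,\EE(\mu)$ one needs boundedness (clear, $0\le\EE\le\tfrac12$) and enough continuity. The indicator $\id_{\{(x_{1}-x_{2})(y_{1}-y_{2})<0\}}$ is discontinuous on $\{x_{1}=x_{2}\}\cup\{y_{1}=y_{2}\}$, but permutons charge no vertical or horizontal line and box-convergence of permutons forces uniform convergence of their distribution functions, so $\EE$ is continuous on the closed set $\PP$, which carries all the mass of the LDP from step (i); a routine truncation then reduces matters to a globally continuous bounded functional. Varadhan's lemma now produces the LDP for $\Pp_{n,\beta}$ with a rate function of the form $S(\mu\,|\,\lambda^{\otimes2})+\beta\,\EE(\mu)+\mathrm{const}$, i.e.\ the functional $\I_{\beta}$ of \eqref{eq:rate-function}. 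The additive constant is pinned down by the normalization $\inf_{\mu}\I_{\beta}(\mu)=0$ (forced because $\Pp_{n,\beta}$ is a probability measure and $\I_{\beta}$ a good rate function, with the unique zero at Starr's limit $\mu_{\beta}$ from Theorem \ref{th:starr-lln}); the closed form $p(\beta)=\int_{0}^{1}\log\frac{1-e^{-\beta x}}{\beta x}\,dx$ is then recovered by an independent evaluation of $\Zz_{n,q}=\prod_{i=1}^{n}\frac{1-q^{i}}{1-q}$, whose logarithm, with $q=e^{\beta/n}$, is a Riemann sum converging to the corresponding integral. The good-rate-function and continuity-in-$\beta$ properties are as recorded in \cite[Lemma 3.1]{starr-walters}.

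\textbf{Main obstacle.} The delicate point is the interplay between the topology and the two non-continuous objects at play: the relative entropy $S(\cdot\,|\,\lambda^{\otimes2})$, only lower semicontinuous, and the energy $\EE$, whose defining indicator is genuinely discontinuous. Both are tamed only because the effective state space is the compact closed set of permutons $\PP$, on which the weak, box, and (for the entropy) refinement-limit topologies cooperate; making the Varadhan transfer rigorous for a functional continuous merely on this effective support, and obtaining the step-(i) upper bound uniformly in the resolution $k$, are where care is needed. The remaining ingredients—the Stirling asymptotics, the Riemann-sum evaluation of $p(\beta)$, and the polynomial-in-$n$ bound on the number of types—are routine.
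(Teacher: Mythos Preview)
The paper does not prove this theorem; it is quoted from \cite[Proposition~3.2]{starr-walters}, with only a remark that the result there is stated for the permuton model with density $f_\sigma$ but transfers to the discrete empirical measure $\mu_\sigma$. There is thus no proof in the paper to compare your proposal against.

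Your outline---a combinatorial type-count under the uniform measure to obtain the $\beta=0$ LDP with rate $S(\cdot\,|\,\lambda^{\otimes2})$, followed by exponential tilting via Varadhan's lemma---is the standard route to such Gibbs LDPs and is essentially what is carried out in the references the paper cites (\cite{starr-walters}, \cite{MR2391251}, \cite{MR3476619}). One internal inconsistency to flag: from your own identity $\Inv(\sigma)=n^{2}\EE(\mu_\sigma)$ and $q=e^{\beta/n}$, the tilt factor is $e^{+n\beta\EE(\mu_\sigma)}$, so the tilted rate function should come out as $S(\cdot\,|\,\lambda^{\otimes2})-\beta\EE(\cdot)+\mathrm{const}$, not $+\beta\EE$ as you write when matching \eqref{eq:rate-function}. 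The paper changed $-\beta$ to $\beta$ when restating Theorem~\ref{th:starr-lln} (see the sentence following it) but apparently did not propagate that sign change into the rate function; this is harmless for the paper's downstream use, since only the upper bound and the uniqueness of the zero of $\I_\beta$ are invoked, but your sketch should be internally consistent on this point.
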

\begin{remark}
The large deviation result in \cite{starr-walters} is stated and proved for the `continuous' version of the Mallows model, which may be defined by replacing the empirical measure $\mu_{\sigma^{n}}$ by a permuton with density $f_{\sigma^{n}}(x,y) = n \id_{\sigma^{n}(\lceil nx\rceil) = \lceil ny\rceil}$. Nevertheless, the large deviation principle holds in the same form in both models, see \cite[Remark 3.4, Remark 3.5]{starr-walters} and Remark A.6 in version $4$ of \cite{starr-walters} on arXiv. 
\end{remark}

Let us remark that in our proofs we will only use the large deviation upper bound, exploiting the fact that the weak and box topology are equivalent on the set of permutons $\PP$.

A key fact is that the rate function $\I_{\beta}(\cdot)$ has a unique minimizer which is the measure $\mu_\beta$ from the Law of Large Numbers (Theorem \ref{th:starr-lln}).

\begin{proposition}[\cite{starr-walters}, Theorem 3.11]\label{prop:starr-walters-minimizer}
For each $\beta \in \R$ the measure $\mu_{\beta}$ satisfies $\I_{\beta}(\mu_{\beta}) = 0$ and is the unique minimizer of $I_\beta$ over the space $\MM([0,1]^2)$.
\end{proposition}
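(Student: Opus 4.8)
I would treat this as a convex variational problem for $\I_\beta$ on the weakly closed convex set $\PP \subseteq \MM([0,1]^2)$ of permutons, in the spirit of Gibbs conditioning. First, since $\I_\beta$ is a good rate function (\cite[Lemma 3.1]{starr-walters}) it attains its infimum on $\PP$, and applying the upper bound of Theorem \ref{th:starr-walters-ldp} with $A = \MM([0,1]^2)$ gives $\inf \I_\beta \le 0$; as $\I_\beta \ge 0$, we get $\inf \I_\beta = 0$. To see that $\mu_\beta$ is itself a zero I would invoke the law of large numbers (Theorem \ref{th:starr-lln}): if $\I_\beta(\mu_\beta) = \delta > 0$, then, by lower semicontinuity, there is a weakly open neighbourhood $U \ni \mu_\beta$ with $\I_\beta \ge \delta/2$ on $U$, and then the upper bound forces $\Pp_{n,\beta}(\mu_{\sigma^n} \in U') \to 0$ exponentially for a closed neighbourhood $U' \subseteq U$, contradicting $\mu_{\sigma^n} \xrightarrow{\Pp} \mu_\beta$. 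So $\I_\beta(\mu_\beta) = 0$, and only uniqueness of the minimizer remains.

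Since $\I_\beta(\mu) = +\infty$ unless $\mu \ll \lambda^{\otimes 2}$ with $\log(d\mu/d\lambda^{\otimes 2}) \in L^1(\mu)$, only such $\mu$ can be minimizers. For a minimizer $\mu$ I would compute, for an arbitrary $\nu \in \PP$, the one-sided derivative $\left.\tfrac{d}{d\varepsilon}\right|_{\varepsilon = 0^+}\I_\beta\big((1-\varepsilon)\mu + \varepsilon\nu\big) \ge 0$. With $W_\mu(x,y) := \int \id_{\{(x-x')(y-y') < 0\}}\, d\mu(x',y')$ the local energy, this reads $\int g\, d\nu \ge \int g\, d\mu$ for all $\nu \in \PP$, where $g(x,y) = \log\tfrac{d\mu}{d\lambda^{\otimes 2}}(x,y) + \beta W_\mu(x,y)$; that is, $\mu$ is optimal for a transport problem with fixed uniform marginals. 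Modulo standard regularity considerations for the dual problem, Kantorovich duality and complementary slackness then yield measurable $\phi, \psi$ with $g(x,y) \ge \phi(x) + \psi(y)$ everywhere and $g(x,y) = \phi(x) + \psi(y)$ for $\mu$-a.e.\ $(x,y)$, i.e.\ $\tfrac{d\mu}{d\lambda^{\otimes 2}}(x,y) = e^{\phi(x) + \psi(y) - \beta W_\mu(x,y)}$ on $\supp \mu$. Since $\rho_\beta > 0$ on $[0,1]^2$, for $\mu_\beta$ this holds $\lambda^{\otimes 2}$-a.e., and checking it against the explicit formula \eqref{eq:definition-of-rho-beta} amounts to verifying that $\rho_\beta$ solves the Liouville-type Euler--Lagrange equation it was built from in \cite{starr} --- a routine but lengthy computation with hyperbolic functions.

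Given the Euler--Lagrange identity for $\mu_\beta$, I would then derive the Bregman-type identity
\[
\I_\beta(\mu) - \I_\beta(\mu_\beta) = S(\mu \,|\, \mu_\beta) + \beta\, \widehat{\EE}(\mu - \mu_\beta),
\]
valid for every permuton $\mu \ll \lambda^{\otimes 2}$, where $\widehat{\EE}(\mu - \mu_\beta) := \tfrac12 \iint \id_{\{(x-x')(y-y')<0\}}\, d(\mu-\mu_\beta)(x,y)\, d(\mu-\mu_\beta)(x',y')$ is the quadratic form of $\EE$ at the signed measure $\mu - \mu_\beta$: one writes $S(\mu\,|\,\lambda^{\otimes 2}) = S(\mu\,|\,\mu_\beta) + \int \log\rho_\beta\, d\mu$, substitutes $\log\rho_\beta = \phi + \psi - \beta W_{\mu_\beta}$, uses that $\mu$ and $\mu_\beta$ share the uniform marginals so the $\phi(x) + \psi(y)$ contributions cancel, and reorganizes the energy terms. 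As $S(\mu\,|\,\mu_\beta) \ge 0$ with equality iff $\mu = \mu_\beta$, uniqueness reduces to showing $S(\mu\,|\,\mu_\beta) + \beta\, \widehat{\EE}(\mu - \mu_\beta) > 0$ for $\mu \ne \mu_\beta$. Here I would first reduce to $\beta \ge 0$ using the reflection $(x,y) \mapsto (1-x,y)$, which conjugates the Mallows model with parameter $q$ to that with $q^{-1}$ and hence $\I_\beta$ to $\I_{-\beta}$ (via $\EE \mapsto \tfrac12 - \EE$, consistently with the easily checked relation $p(\beta) - p(-\beta) = -\beta/2$) and $\mu_\beta$ to $\mu_{-\beta}$, so that unique minimality of $\I_\beta$ is equivalent to that of $\I_{-\beta}$.

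The positivity $S(\mu\,|\,\mu_\beta) + \beta\, \widehat{\EE}(\mu - \mu_\beta) > 0$ is, I expect, the main obstacle. The form $\widehat{\EE}$ is \emph{not} sign-definite on mean-zero directions --- it is positive on modes of the form $\cos(2\pi(x+y))$ and negative on those of the form $\cos(2\pi(x-y))$ --- so bounding $|\widehat{\EE}(\mu-\mu_\beta)|$ crudely by an operator norm and playing it against the relative-entropy lower bound breaks down once $|\beta|$ (equivalently $\|\rho_\beta\|_\infty$) is large. The right tool should be the Volterra (monotone) structure of the kernel $\id_{\{(x-x')(y-y')<0\}} = \id_{\{x < x'\}}\id_{\{y > y'\}} + \id_{\{x > x'\}}\id_{\{y < y'\}}$: differentiating the Euler--Lagrange relation turns it into a Liouville-type PDE for $\log\rho$ with the uniform-marginal constraints as boundary conditions, for which uniqueness among permuton densities can be established by a maximum-principle argument --- equivalently, one shows $\I_\beta$ is strictly convex along every line segment terminating at $\mu_\beta$, the strictly convex entropy term outweighing the indefinite energy term along such segments.
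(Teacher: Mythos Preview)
The paper does not prove this proposition: it is stated with a citation to \cite[Theorem 3.11]{starr-walters} and no argument is given. So there is nothing in the paper to compare your approach against; I will simply assess your proposal on its own terms.

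Your argument that $\inf \I_\beta = 0$ and $\I_\beta(\mu_\beta)=0$ is fine: applying both bounds of Theorem~\ref{th:starr-walters-ldp} with $A=\MM([0,1]^2)$ gives $\inf \I_\beta = 0$, and the combination of Theorem~\ref{th:starr-lln} with lower semicontinuity forces the infimum to be attained at $\mu_\beta$. This part is standard and correct.

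The uniqueness part, however, is not a proof but a plan with an acknowledged hole. You correctly identify the Euler--Lagrange condition and the Bregman-type decomposition $\I_\beta(\mu) = S(\mu\,|\,\mu_\beta) + \beta\,\widehat{\EE}(\mu-\mu_\beta)$, and you correctly observe that the quadratic form $\widehat{\EE}$ is indefinite, so that the entropy term does not automatically dominate. But from that point on your text is programmatic: ``the right tool should be\ldots'', ``uniqueness can be established by a maximum-principle argument'', ``equivalently, one shows $\I_\beta$ is strictly convex along every line segment terminating at $\mu_\beta$''. None of these claims is substantiated, and the last one is precisely equivalent to what you are trying to prove (positivity of $S(\mu|\mu_\beta)+\beta\widehat{\EE}(\mu-\mu_\beta)$ along the segment), so invoking it is circular. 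The Kantorovich duality step is also brushed over: you assert that optimal $\phi,\psi$ exist and that equality holds $\mu$-a.e., but without a priori regularity of $g$ this requires justification, and for a general minimizer $\mu$ (not yet known to have full support) the passage from ``$\mu$-a.e.'' to ``$\lambda^{\otimes 2}$-a.e.'' is exactly where the argument would need to bite.

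In short: the reduction to a Liouville-type PDE with marginal constraints is indeed how \cite{starr-walters} proceed, but the uniqueness for that PDE is the substance of their Theorem~3.11, and your proposal stops just before that work begins. As written, this is a correct outline of the first half and a sketch of the second, not a proof.
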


The large deviation principle has the following corollary, which will be important for the proof of Theorem \ref{th:main-theorem-global}. For a rectangle $R = [a,b] \times [c,d] \subseteq [0,1]^2$ and $\sigma \in \Ss_n$ let
\begin{align}\label{eq:definition-of-Delta}
\Delta_{R}(\sigma) = \frac{1}{n} \left| \left\{ i \in [n] \, \colon \,  \frac{i}{n} \in [a, b], \frac{\sigma(i)}{n} \in [c, d] \right\} \right|.
\end{align}
We have
\begin{corollary}\label{cor:ldp}
Fix $T > 0$, $\varepsilon > 0$ and $\beta \in [0,T]$. Let $\sigma^{n}$ be sampled according to $\Pp_{n,\beta}$. We have
\[
\Pp_{n, \beta} \Bigg( \sup\limits_{R} \bigg| \Delta_{R}\left(\sigma^{n}\right) - \int\limits_{R} \rho_{\beta}(x,y) \, dx \, dy \bigg| \geq \varepsilon \Bigg) \leq 2 e^{-cn},
\]
where the supremum is over all rectangles $R = [a,b] \times [c,d] \subseteq [0,1]^2$, $\rho_{\beta}$ is the density from Theorem \ref{th:starr-lln} and the constant $c = c_{\varepsilon, T} > 0$ depends only on $\varepsilon$ and $T$.
\end{corollary}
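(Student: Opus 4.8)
The plan is to translate the statement into the language of permutons and then invoke the large deviation upper bound of Theorem~\ref{th:starr-walters-ldp}. First note that $\Delta_R(\sigma) = \mu_\sigma(R)$ for the empirical measure $\mu_\sigma$, and that $\int_R \rho_\beta\,dx\,dy = \mu_\beta(R)$, so the quantity to be controlled equals $\sup_R |\mu_{\sigma^n}(R) - \mu_\beta(R)|$, the supremum being over rectangles $R \subseteq [0,1]^2$. The measure $\mu_{\sigma^n}$ is not a permuton, but it differs on every rectangle by at most $C/n$, for an absolute constant $C$, from the block permuton $\widehat\mu_{\sigma^n}$ obtained by spreading each atom $\tfrac1n\delta_{(i/n,\sigma^n(i)/n)}$ uniformly over the square $[\tfrac{i-1}{n},\tfrac in]\times[\tfrac{\sigma^n(i)-1}{n},\tfrac{\sigma^n(i)}{n}]$, because the boundary of any rectangle meets only a bounded number of these squares (degenerate rectangles are handled in the same way). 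As $\widehat\mu_{\sigma^n}$ and $\mu_\beta$ are genuine permutons, $\sup_R|\widehat\mu_{\sigma^n}(R)-\mu_\beta(R)| = d_\Box(\widehat\mu_{\sigma^n},\mu_\beta)$, so for $n$ large enough (depending only on $\varepsilon$) it is enough to bound $\Pp_{n,\beta}\big(d_\Box(\widehat\mu_{\sigma^n},\mu_\beta) \geq \varepsilon/2\big)$.

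Next I would fix $\delta = \varepsilon/2$ and set $A = \{\mu\in\PP : d_\Box(\mu,\mu_\beta)\geq\delta\}$. By the equivalence of the box and weak topologies on $\PP$ (\cite[Lemma 5.3]{permuton-paper}) together with the closedness of $\PP$ in $\MM([0,1]^2)$, the set $A$ is weakly closed. Theorem~\ref{th:starr-walters-ldp}, applied to the block permuton encoding (which is legitimate by the Remark following that theorem), then yields $\limsup_n \tfrac1n\log\Pp_{n,\beta}(\widehat\mu_{\sigma^n}\in A) \leq -\inf_{A}\I_\beta$. It remains to see that this infimum is strictly positive. Since $\I_\beta$ is a good rate function, a minimizing sequence in the closed set $A$ has a weak limit point $\mu^\ast \in A$ with $\I_\beta(\mu^\ast) = \inf_{A}\I_\beta$; were this equal to $0$, lower semicontinuity would give $\I_\beta(\mu^\ast)=0$ and hence $\mu^\ast=\mu_\beta$ by Proposition~\ref{prop:starr-walters-minimizer}, contradicting $\mu^\ast\in A$.

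To make the resulting constant depend only on $\varepsilon$ and $T$ (and not on $\beta$), I would strengthen the last step to $c_\delta := \inf\{\I_\beta(\mu) : \beta\in[0,T],\ d_\Box(\mu,\mu_\beta)\geq\delta\} > 0$ by an analogous compactness argument over $[0,T]$: given $\beta_k\to\beta_\ast$ and $\mu_k$ with $d_\Box(\mu_k,\mu_{\beta_k})\geq\delta$ and $\I_{\beta_k}(\mu_k)\to 0$, boundedness of the $\beta_k$, of $\EE(\mu_k)\in[0,1/2]$ and of $p(\beta_k)$ forces $S(\mu_k\,|\,\lambda^{\otimes 2})$ to stay bounded, so $\{\mu_k\}$ is weakly precompact; passing to a weak limit $\mu_\ast$ and using lower semicontinuity of relative entropy, continuity of $\EE$ on $\PP$, continuity of $p$, and continuity of $\beta\mapsto\mu_\beta$ (clear from \eqref{eq:definition-of-rho-beta}), one obtains $\I_{\beta_\ast}(\mu_\ast)=0$, hence $\mu_\ast=\mu_{\beta_\ast}$, contradicting $d_\Box(\mu_k,\mu_{\beta_k})\geq\delta$. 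With $c_\delta$ fixed, the upper bound gives $\Pp_{n,\beta}(\widehat\mu_{\sigma^n}\in A)\leq e^{-c_\delta n/2}$ for all large $n$, with a threshold that can be taken uniform in $\beta\in[0,T]$ because the only $\beta$- and $n$-dependent inputs of the upper bound reduce to partition function asymptotics that converge uniformly on $[0,T]$. Shrinking $c=c_{\varepsilon,T}$ if needed (so that $2e^{-cn}\geq 1$ holds trivially for the finitely many remaining small values of $n$) and absorbing the slack into the factor $2$ then completes the proof.

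The main obstacle I anticipate is exactly this last point: Theorem~\ref{th:starr-walters-ldp} is an asymptotic statement at a fixed parameter, whereas the corollary demands an estimate valid for every $n$ with a $\beta$-free constant. This is bridged by the two compactness arguments above together with the routine but unavoidable check that the large deviation estimates of \cite{starr-walters} are uniform for $\beta$ in a compact set; the remaining ingredients---the identity $\Delta_R=\mu_\sigma(R)$, the $O(1/n)$ replacement of $\mu_{\sigma^n}$ by a permuton, and the positivity of $\inf_{A}\I_\beta$---follow immediately from the cited results.
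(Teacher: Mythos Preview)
Your approach is essentially the same as the paper's: pass to the block permuton $\widehat{\mu}_{\sigma^n}$, use the equivalence of box and weak topologies on $\PP$ to apply the large deviation upper bound on a weakly closed set, and invoke Proposition~\ref{prop:starr-walters-minimizer} together with goodness of the rate function to get a strictly positive infimum. Your diagonal compactness argument showing $\inf_{\beta\in[0,T]}\inf_{A}\I_\beta>0$ is correct and is a slightly different packaging of what the paper does via uniform equicontinuity of $\beta\mapsto\I_\beta(\mu)$ on $[0,T]$.

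The gap you yourself flag is real, and the paper fills it by a concrete argument rather than by appealing to uniformity inside the proof of the LDP. The issue is that Theorem~\ref{th:starr-walters-ldp} only gives, for each fixed $\beta$, a threshold $n_{\beta,\varepsilon}$ beyond which the exponential bound holds; one must rule out $\sup_{\beta\in[0,T]} n_{\beta,\varepsilon}=\infty$. The paper does this as follows: choose a finite mesh $0=\beta_0<\cdots<\beta_N=T$ with spacing smaller than $\min(\delta,b_{\varepsilon/2}/2)$, where $\delta$ comes from total-variation continuity of $\beta\mapsto\mu_\beta$. For any $\beta\in[\beta_j,\beta_{j+1}]$ one compares measures directly from the definition \eqref{eq:definition-of-mallows},
\[
\Pp_{n,\beta}(\sigma^n\in B)\;\le\; e^{(\beta-\beta_j)(n-1)/2}\,\Pp_{n,\beta_j}(\sigma^n\in B),
\]
using $\Inv(\sigma)\le\binom{n}{2}$ and monotonicity of the partition function. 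Combined with the bound at the finitely many $\beta_j$'s (valid for $n\ge\max_j n_{\beta_j,\varepsilon/2}$) and the $\varepsilon/2$ slack in the rectangle integrals, this yields the desired estimate for all $\beta\in[0,T]$ simultaneously once $n$ exceeds a single finite threshold. Your sentence ``partition function asymptotics that converge uniformly on $[0,T]$'' gestures in this direction but is not a proof; the change-of-measure step above is what actually closes the argument.
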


Since the proof is slightly technical, we defer it to the Appendix.
\end{subsection}
\begin{subsection}{Evolution of inversions}\label{subsec:inversions}

\paragraph{Approximation of rates.} Recall that in the Mallows process $\left(\sigma^{n}_{t}\right)_{t \in [0,\infty)}$ the left inversion vector at time $t$ is given by $\left( \ell_{1}\left(\sigma^{n}_{t}\right), \ldots, \ell_{n}\left(\sigma^{n}_{t}\right) \right) = \left( \ell^{n}_{1}(t), \ldots, \ell^{n}_{n}(t) \right)$, where each $\left( \ell^{n}_{i}(t) \right)_{t \in [0,\infty)}$ is a Markov birth process with birth rate given by \eqref{eq:def-of-rates}. In this subsection we show that under the global space and time scaling the evolution of $\ell^{n}_{i}(t)$ can be well-approximated by solutions of a deterministic ODE with random initial conditions.

Let $I_{i}(t) = \frac{1}{n} \ell^{n}_{i}( e^{t/n} )$, $i=1, \ldots, n$, denote the rescaled processes of left inversions (with the dependence on $n$ suppressed in the notation). For $x,y \in [0,1]$ and $t \in \R$ let
\begin{equation}\label{eq:rescaled-rates}
\begin{cases}
\lambda_{x}(y, t) = \frac{1}{t} \left( - y + \frac{x e^{t x}}{e^{t x} - 1}(1 - e^{-t y}) \right), & t \neq 0\\
\lambda_{x}(y, 0) = \frac{1}{2}y(x - y), & t = 0
\end{cases}
\end{equation}
where we set $\lambda_{0}(y, t) = \frac{1}{t}(-y + (1-e^{-ty})/t)$ for $t \neq 0$. Note that $\lambda_{x}(y, t)$ is continuous in $t$ and Lipschitz in $x$ and $y$, with Lipschitz constant uniform in $t \in [0,T]$ for fixed $T > 0$. For the sake of completeness we prove this as Lemma \ref{le:Lipschitz-condition} in the Appendix.

Let us remark that for simplicity of notation we have defined $\lambda_{x}(y,t)$ for all $x,y \in [0,1]$, but in fact we will only consider $y \leq x$ in the sequel.

The functions $\lambda_{x}$ approximate the rates $p_{i}$ from \eqref{eq:def-of-rates} as explained in the following lemma.

\begin{lemma}\label{le:rate-approximation} For any integers, $i,j$ such that $0\le j < i \le n$, any $x,y \in [0,1]$ and $t \in [0,T]$ we have
\begin{equation}\label{eq:intensity-approximation}
\left|\frac{1}{n^2} p_{i}\left(j, e^{\frac{t}{n}} \right) - \lambda_{x}(y, t) \right| \leq C_T\left(\left|\frac{i}{n} - x\right| + \left|\frac{j}{n} - y\right| + \frac{1}{n}\right),
\end{equation}
where $C_T$ depends only on $T$.
\end{lemma}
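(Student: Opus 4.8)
The plan is to prove the estimate \eqref{eq:intensity-approximation} by a direct Taylor-expansion argument: show that $\frac{1}{n^2} p_i(j, e^{t/n})$ is, up to an $O(1/n)$ error, equal to $\lambda_{i/n}(j/n, t)$, and then use the Lipschitz property of $\lambda_x(y,t)$ in $x$ and $y$ (Lemma \ref{le:Lipschitz-condition}) to replace $(i/n, j/n)$ by arbitrary $(x,y)$. Thus the proof splits into two contributions, each bounded by the right-hand side of \eqref{eq:intensity-approximation}:
\[
\left|\tfrac{1}{n^2} p_i(j,e^{t/n}) - \lambda_{i/n}(j/n,t)\right| \le \tfrac{C_T}{n}, \qquad
\left|\lambda_{i/n}(j/n,t) - \lambda_x(y,t)\right| \le C_T\!\left(\left|\tfrac{i}{n}-x\right| + \left|\tfrac{j}{n}-y\right|\right).
\]
The second inequality is immediate from Lemma \ref{le:Lipschitz-condition}, so the work is entirely in the first one.

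First I would write $q = e^{t/n}$, set $u = i/n$, $v = j/n$, and substitute into the first line of \eqref{eq:def-of-rates}:
\[
\tfrac{1}{n^2} p_i(j,q) = \tfrac{1}{n^2}\cdot\tfrac{1}{1-q}\left(j+1 - \tfrac{i\, q^{\,i-j-1}(q^{j+1}-1)}{q^i - 1}\right).
\]
Using $q = e^{t/n}$ one has $q^{i} = e^{tu}$, $q^{j+1} = e^{tv}e^{t/n}$, $q^{i-j-1} = e^{t(u-v)}e^{-t/n}$, so every power of $q$ appearing is $e^{t\cdot(\text{something})}$ with the exponent equal to a target value ($tu$, $tv$, $t(u-v)$) plus a correction of size $O(t/n) = O(1/n)$. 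Also $\frac{1}{1-q} = \frac{1}{1-e^{t/n}} = -\frac{n}{t} + O(1)$, and $\frac{j+1}{n^2}\cdot(1-q)^{-1} = -\frac{v}{t} + O(1/n)$ after expanding, while $\frac{1}{n^2}\cdot\frac{1}{1-q}\cdot\frac{i q^{i-j-1}(q^{j+1}-1)}{q^i-1}$ becomes $\frac{u}{t}\cdot\frac{e^{t(u-v)}(e^{tv}-1)}{e^{tu}-1} + O(1/n) = \frac{u}{t}\cdot\frac{e^{tu}(1-e^{-tv})}{e^{tu}-1} + O(1/n)$, which matches $-\lambda_u(v,t)$ in \eqref{eq:rescaled-rates} up to the sign convention. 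Combining, $\frac{1}{n^2}p_i(j,q) = \lambda_{i/n}(j/n,t) + O(1/n)$. To make the error bounds uniform one keeps explicit remainder terms: for $|s| \le T$ and $|h| \le T/n$ one has $|e^{s+h} - e^s| \le e^T |h| \le e^T T/n$ and $|e^{s+h}-e^s - h e^s| \le \frac12 e^{2T} h^2$, and similarly $\left|\frac{1}{1-e^{t/n}} + \frac{n}{t}\right| \le C_T$; the denominators $e^{tu}-1$ (with $u\ge j/n+1/n$ bounded away from $0$ when $t\ne 0$) are controlled via $\frac{e^{a}-1}{a}\in[1, e^T]$. One must handle the $t=0$ case separately, checking that both sides reduce to $\frac12 v(u-v)$ in the limit and that the remainder terms remain $O(1/n)$ uniformly — this follows from $\frac{e^{t/n}-1}{t/n}\to 1$ and the analyticity of all the expressions in $t$ near $0$, so one can either take $t\to 0$ limits in the estimates obtained for $t\ne 0$ or redo the (easier) expansion directly with the $t=0$ formulas.

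The main obstacle is bookkeeping: keeping all the error terms uniform in $i,j,t$ with $0 \le j < i \le n$ and $t \in [0,T]$, especially near $t=0$ where $\frac{1}{1-q}$ blows up like $n/t$ and near the boundary $j = i-1$ where $q^{i-j-1}$, $q^{j+1}-1$ and $q^i-1$ interact. The cleanest route is to introduce the auxiliary function $g(a) := \frac{e^a - 1}{a}$ (with $g(0)=1$), which is smooth, bounded above and below on $[-T,T]$, and rewrite $\frac{1}{1-q} = -\frac{n}{t\, g(t/n)}$, $q^i - 1 = \frac{tu}{1}\cdot g(tu)$ up to... more carefully $e^{tu}-1 = tu\, g(tu)$ — so that every ratio of the form $\frac{q^{j+1}-1}{q^i-1}$ becomes $\frac{v\,g(tv)\cdot(1+O(1/n))}{u\, g(tu)}$ with all factors bounded and Lipschitz. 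Then a single application of "$|AB - A'B'| \le |A||B-B'| + |B'||A-A'|$" across three or four factors, each approximation being $O(1/n)$ with $T$-dependent constants, yields the claim, including continuity at $t=0$. I would write this out carefully but compactly, isolating the $g$-function lemma (if not already available) as the one computational input.
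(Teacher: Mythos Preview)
Your approach is correct and arrives at the same conclusion, but it takes a more laborious route than the paper's proof. The paper observes the \emph{exact} algebraic identity
\[
\frac{1}{n^2}\, p_i\!\left(j,e^{t/n}\right) \;=\; \frac{t/n}{e^{t/n}-1}\cdot \lambda_{\frac{i}{n}}\!\left(\frac{j+1}{n},t\right)
\]
for $t>0$, which you can verify directly from \eqref{eq:def-of-rates} and \eqref{eq:rescaled-rates} after rewriting $q^{i-j-1}(q^{j+1}-1)=e^{ti/n}(1-e^{-t(j+1)/n})$. With this identity in hand, a single triangle inequality splits the error into $\frac{t/n}{e^{t/n}-1}\,|\lambda_{i/n}((j+1)/n,t)-\lambda_x(y,t)|$ (handled by Lemma~\ref{le:Lipschitz-condition}) and $\lambda_x(y,t)\,|\frac{t/n}{e^{t/n}-1}-1|$ (which is $O(t/n)$), and the proof is over in a few lines, with the $t=0$ case immediate. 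Your plan instead expands every factor separately and tracks all the remainders through the product; this works, and your device of writing everything via $g(a)=(e^a-1)/a$ is exactly the right way to keep the estimates uniform near $t=0$, but it amounts to redoing by hand the cancellation that the exact identity gives for free. The identity is worth knowing: it explains structurally why the approximation error is $O(1/n)$ (the only discrepancies are the prefactor $\frac{t/n}{e^{t/n}-1}\approx 1$ and the shift $j\mapsto j+1$), whereas in your expansion this emerges only after several terms combine.
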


\begin{proof}
Observe that for $t > 0$,
\[
  \frac{1}{n^2} p_{i}\left(j, e^{\frac{t}{n}} \right) = \frac{t/n}{e^{t/n}-1} \cdot \lambda_{\frac{i}{n}}\left(\frac{j+1}{n},t\right)
\]
and so
\begin{displaymath}
  \left| \frac{1}{n^2} p_{i}\left(j, e^{\frac{t}{n}} \right) - \lambda_{x}(y, t)\right| \le \frac{t/n}{e^{t/n}-1} \left| \lambda_{\frac{i}{n}}\left(\frac{j+1}{n},t\right) - \lambda_x(y,t)\right|
  + \lambda_x(y,t)\left|\frac{t/n}{e^{t/n}-1} - 1\right|. 
\end{displaymath}
The second factor in the last term on the right-hand side is bounded by $Ct/n$ for some constant $C$, depending only on $T$. By Lemma \ref{le:Lipschitz-condition} from the Appendix the second factor in the first term is bounded by $C(\left|\frac{i}{n} - x\right| + \left|\frac{j+1}{n}-y\right|)$. These two observations also imply that the remaining factors are bounded by a constant, which proves the lemma for $t > 0$. The case of $t = 0$ is straightforward. 
\end{proof}

In particular, there exists $C = C_T > 0$, depending only on $T$, such that $p_{i}\left(j, e^{\frac{t}{n}}\right) \leq C n^2$ for any $i,j=1,\ldots,n$, $t \in [0,T]$, so that each $\ell^{n}_{i}(t)$ is a counting process with intensity bounded by $Cn^2$ (we recall the definition of a counting process and its intensity briefly in the Appendix).

\paragraph{The ODE limit.} We will now show the following ODE approximation for the joint dynamics of $I_{i}(t)$.
\begin{proposition}\label{prop:evolution-of-inversions}
Let $(I_{1}(t), \ldots, I_{n}(t))$ be as above and for $i=1,\ldots,n$ let $y_{i}(t)$, $t \in [0,T]$, denote the solution of the ODE
\[
\begin{cases}
& \dot{y}_{i}(t) = \lambda_{\frac{i}{n}}(y_{i}(t),t), \\
& y_{i}(0) = c_i
\end{cases}
\]
for some random variables $c_i \in [0,1]$. We have
\[
\Pp\left( \forall \, i \in [n] \, \sup\limits_{t \in [0,T]} \left| I_{i}\left( t \right) - y_{i}(t) \right| < C \left(\left| I_{i}(0) - y_{i}(0) \right| + \frac{C}{n^{1/4}} \right) \right) \geq 1 - 2e^{-c  \sqrt{n}},
\]
where $C, c > 0$ are constants depending only on $T$.
\end{proposition}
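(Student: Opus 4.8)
\medskip

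The plan is to establish Proposition \ref{prop:evolution-of-inversions} as a quantitative fluid-limit (functional law of large numbers) statement for the family of independent counting processes $\ell^n_i(t)$, following the general scheme of \cite{norris}. First I would pass to the rescaled clock: with $I_i(t) = \frac1n \ell^n_i(e^{t/n})$, a change of variables shows that $I_i(t) - I_i(0)$ equals a time-integral of the instantaneous jump rate plus a martingale. Concretely, writing $N_i$ for the underlying counting process, $I_i$ satisfies a Doob--Meyer decomposition
\[
I_i(t) = I_i(0) + \int_0^t \frac{1}{n^2}\, p_i\!\left(\ell^n_i(e^{s/n}),\, e^{s/n}\right) \frac{e^{s/n}}{n}\cdot n \, \dd s + M_i(t),
\]
where $M_i$ is a mean-zero martingale whose predictable quadratic variation is of order $\frac1n\int_0^t (\text{rate})\,\dd s$, hence $O(1/n)$ on $[0,T]$ since the rescaled rates are bounded by Lemma \ref{le:rate-approximation} (more precisely, the jump sizes of $I_i$ are $1/n$ and the rate in the $t$-clock is $O(n)$, so $\langle M_i\rangle_T = O(1/n)$). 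The time-rescaling factor $\frac{e^{s/n}}{1}$ is handled exactly as in the proof of Lemma \ref{le:rate-approximation}; it contributes only an $O(1/n)$ error uniformly on $[0,T]$.

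\medskip

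Next I would feed Lemma \ref{le:rate-approximation} into the drift term: with $x = i/n$ and $y = I_i(s)$ it gives $\big|\frac{1}{n^2} p_i(\ell^n_i(e^{s/n}), e^{s/n}) - \lambda_{i/n}(I_i(s), s)\big| \le C_T(\frac1n + \frac1n) = O(1/n)$ (the $|\frac{i}{n}-x|$ term vanishes and $|\frac{j}{n}-y| \le 1/n$ since $j = \ell^n_i$ and $y = \frac1n \ell^n_i$). Therefore
\[
I_i(t) = I_i(0) + \int_0^t \lambda_{i/n}\!\left(I_i(s), s\right) \dd s + M_i(t) + E_i(t),
\]
with $\sup_{t\in[0,T]}|E_i(t)| \le C_T/n$ deterministically. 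Comparing with the ODE $\dot y_i = \lambda_{i/n}(y_i,s)$, $y_i(0)=c_i$, and subtracting, I get
\[
|I_i(t) - y_i(t)| \le |I_i(0) - c_i| + \int_0^t \big|\lambda_{i/n}(I_i(s),s) - \lambda_{i/n}(y_i(s),s)\big|\,\dd s + \sup_{[0,T]}|M_i| + \frac{C_T}{n}.
\]
Using the uniform-in-$t$ Lipschitz property of $\lambda_x$ in its second variable (Lemma \ref{le:Lipschitz-condition}), the integrand is $\le L_T |I_i(s) - y_i(s)|$, and Grönwall's inequality yields
\[
\sup_{t\in[0,T]} |I_i(t) - y_i(t)| \le e^{L_T T}\left( |I_i(0) - c_i| + \sup_{t\in[0,T]}|M_i(t)| + \frac{C_T}{n}\right).
\]
It remains to control the martingale term uniformly over all $i \in [n]$ simultaneously with a probability bound of the stated form $1 - 2e^{-c\sqrt n}$.

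\medskip

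For the martingale bound I would apply an exponential (Bernstein-type / Freedman) inequality for the càdlàg martingale $M_i$, which has jumps of size $1/n$ and $\langle M_i\rangle_T \le C_T/n$. Such an inequality gives, for each fixed $i$ and each $\delta > 0$,
\[
\Pp\!\left(\sup_{t\in[0,T]} |M_i(t)| \ge \delta\right) \le 2\exp\!\left(-\,\frac{c\, n\,\delta^2}{C_T + \delta}\right).
\]
Choosing $\delta = n^{-1/4}$ makes the right-hand side $\le 2\exp(-c' n^{1/2})$ for large $n$ (the exponent is $\sim c\, n \cdot n^{-1/2}/C_T = c\,\sqrt n / C_T$). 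A union bound over $i \in [n]$ costs a factor $n$, which is absorbed into the exponential: $n\cdot 2e^{-c'\sqrt n} \le 2 e^{-c''\sqrt n}$ for $n$ large, and one adjusts constants to cover all $n$. On the complement of this event, every $\sup_{[0,T]}|M_i| < n^{-1/4}$, and combined with the Grönwall bound above (absorbing the $C_T/n$ error into $C/n^{1/4}$) this gives exactly the claimed inequality with $C = e^{L_T T}$ enlarged appropriately. The main obstacle is purely bookkeeping: getting the exponential martingale inequality in a form uniform over $i$ with the right interplay between the jump size $1/n$, the quadratic variation $O(1/n)$, and the threshold $n^{-1/4}$ so that the union bound over $n$ terms survives — this is why the error rate is $n^{-1/4}$ rather than the $n^{-1/2}$ one might naively hope for, and why the probability is $1 - 2e^{-c\sqrt n}$ rather than $1 - 2e^{-cn}$. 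I do not expect any genuine difficulty in the ODE-comparison or the rate-approximation parts, which are handled by the already-established Lemmas \ref{le:rate-approximation} and \ref{le:Lipschitz-condition}.
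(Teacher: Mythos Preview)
Your proposal is correct and follows essentially the same route as the paper: Doob--Meyer decomposition of $I_i$, approximation of the compensator via Lemma~\ref{le:rate-approximation}, Gr\"onwall using the Lipschitz bound of Lemma~\ref{le:Lipschitz-condition}, and an exponential (Bernstein/Freedman) concentration inequality for the martingale at threshold $n^{-1/4}$ followed by a union bound over $i$. The only cosmetic difference is that the paper applies its concentration lemma (Lemma~\ref{le:concentration-counting-process}) to the unrescaled process $\ell_i^n$ on the original time interval $[1,e^{T/n}]$ with threshold $n^{3/4}$, whereas you work directly with the rescaled martingale $M_i$ with jump size $1/n$ and $\langle M_i\rangle_T = O(1/n)$; the two formulations are equivalent.
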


\begin{remark}\label{rem:explicit-solution}
Note that for $0\le y \le x$ we have $\lambda_x(t,y) \ge 0$, moreover $\lambda_x(t,x) = 0$. Together with Lemma \ref{le:Lipschitz-condition} this implies that for $0\le c_i \le \frac{i}{n}$ the ODE in the proposition above has a unique solution. We will need its explicit form for the initial condition $y_{i}(0) = x(1-a)$ (for $x,a \in [0,1]$), in which case we obtain
\begin{equation}\label{eq:explicit-solution}
y_{i}(t) = \frac{1}{t} \log \left( a + (1-a)e^{tx} \right).
\end{equation}
\end{remark}

\begin{proof}[Proof of Proposition \ref{prop:evolution-of-inversions}]
In this proof $C$ will denote a constant which depends only on $T$. The value of this constant may change between occurrences.

Fix $i \in [n]$. Since $p_{i}(\cdot,t)$ is the jump intensity of $\ell_{i}^{n}(t)$ at time $t$, we have for any $t \in [0,T]$
\begin{equation}\label{eq:l-counting-process}
\ell_{i}^{n}\left( e^{t/n} \right) - \ell_{i}^{n}(1) = M^{i}_{e^{t/n}} + \int\limits_{1}^{e^{t/n}} p_{i}\left( \ell_{i}^{n}(s),s \right) ds,
\end{equation}
where $\left(M^{i}_t\right)_{t \geq 1} = (\ell_i^n(t) - \ell_i^n(1) - \int_1^t p_i(\ell_i(s),s)ds)_{t\ge 1}$ is a martingale. Dividing by $n$ and performing the change of variables $s = e^{u/n}$ in the integral leads to
\[
I_{i}(t) - I_{i}(0) = \frac{1}{n}M^{i}_{e^{t/n}} + \int\limits_{0}^{t} \frac{1}{n^2} e^{u/n} p_{i}\left( \ell_{i}^{n}\left(e^{u/n}\right), e^{u/n} \right) du.
\]
Now we employ the approximation \eqref{eq:intensity-approximation} to get
\begin{multline*}
  \Big|\frac{1}{n^2} e^{u/n} p_{i}\left( \ell_{i}^{n}\left(e^{u/n}\right), e^{u/n} \right) - \lambda_{\frac{i}{n}}(I_i(u),u)\Big|\\
\le e^{u/n}\Big| \frac{1}{n^2} p_{i}\left( \ell_{i}^{n}\left(e^{u/n}\right), e^{u/n} \right) - \lambda_{\frac{i}{n}}(I_i(u),u)\Big| + 
\lambda_{\frac{i}{n}}(I_i(u),u) \Big|e^{u/n} - 1\Big| \le \frac{C}{n}.
\end{multline*}
As a consequence
\[
I_{i}(t) - I_{i}(0) = \frac{1}{n}M^{i}_{e^{t/n}} + \int\limits_{0}^{t} \lambda_{\frac{i}{n}}\left(I_i(u), u\right) du + \varepsilon_n(t),
\]
where $\varepsilon_n(t)$ is a stochastic process bounded in absolute value by $\frac{C}{n}$.

Now, since $y_i(t)$ satisfies the relevant ODE, we have
\[
y_i(t) - y_i(0) = \int\limits_{0}^{t} \lambda_{\frac{i}{n}}\left(y_i(s),s\right) ds,
\]
so
\[
I_i(t) - y_i(t) = I_i(0) - y_i(0) + \frac{1}{n}M^{i}_{e^{t/n}} + \int\limits_{0}^{t} \left( \lambda_{\frac{i}{n}}(I_i(s),s) - \lambda_{\frac{i}{n}}(y_i(s),s) \right) ds + \varepsilon_n(t).
\]
We get the following inequality
\[
\left|I_i(t) - y_i(t)\right| \leq |I_i(0) - y_i(0)| + \left| \frac{1}{n}M^{i}_{e^{t/n}} \right| + |\varepsilon_n(t)| + \int\limits_{0}^{t} \left| \lambda_{\frac{i}{n}}(I_i(s),s) - \lambda_{\frac{i}{n}}(y_i(s),s) \right| ds.
\]
Since $\lambda_{\frac{i}{n}}(\cdot,t)$ is $K$-Lipschitz on $[0,1]$ with $K$ depending only on $T$ (see Lemma \ref{le:Lipschitz-condition} from the Appendix), we can further write
\[
\left|I_i(t) - y_i(t)\right| \leq |I_i(0) - y_i(0)| + \left| \frac{1}{n}M^{i}_{e^{t/n}} \right| + |\varepsilon_n(t)| + K \int\limits_{0}^{t} \left| I_i(s) - y_i(s) \right| ds.
\]
Taking the supremum gives
\[
\sup\limits_{t \in [0,T]} |I_i(t) - y_i(t)| \leq |I_i(0) - y_i(0)| + \sup\limits_{t \in [0,T]} \left| \frac{1}{n}M^{i}_{e^{t/n}} \right| + \sup\limits_{t \in [0,T]} |\varepsilon_n(t)| + K \int\limits_{0}^{T} \sup\limits_{0\leq s \leq t} \left| I_{i}(t) - y_i(t) \right| dt
\]
and now an application of Gronwall's inequality leads to
\[
\sup\limits_{t \in [0,T]} |I_i(t) - y_i(t)| \leq \left( |I_i(0) - y_i(0)| + \sup\limits_{t \in [0,T]} \left| \frac{1}{n}M^{i}_{e^{t/n}} \right| + \sup\limits_{t \in [0,T]} |\varepsilon_n(t)| \right) e^{KT}.
\]
We have $\sup_{t\le T} |\varepsilon_n(t)| \le \frac{C}{n}$. Taking into account that the intensity $p_i(j,e^{t/n})$ is bounded on $[0,T]$ by $Cn^2$ we can now apply Lemma \ref{le:concentration-counting-process} from the Appendix to $N_t = \ell_i(1+t) - \ell_i(1)$, $a = Cn^2$, $s = e^{T/n}-1$, and $u = n^{3/4}$, and obtain (using the notation of the lemma)
\begin{displaymath}
  \Pp\Big(\sup_{t \in [0,T]} \frac{1}{n}|M^i_{e^{t/n}}| \ge \frac{1}{n^{1/4}}\Big) = \Pp(\sup_{t \in [0,s]} |Z_t| \ge n^{3/4}) \le 2\exp\Big(-\frac{n^{3/2}}{4C(e^{T/n}-1)n^2 + 2n^{3/4}/3}\Big).
\end{displaymath}
The right-hand side of the above inequality is clearly bounded by  $2\exp(- c\sqrt{n})$, where $c$ is a positive constant depending only on $T$. To finish the proof of the proposition it is now enough to take the union bound over $i \in [n]$ and adjust the constants.
\end{proof}

\end{subsection}

\begin{subsection}{Proof of Theorem \ref{th:main-theorem-global}}\label{subsec:global-proof}
With the previous results we can now prove the global limit for the Mallows process.

\begin{proof}[Proof of Theorem \ref{th:main-theorem-global}]

In the proof $C_\cdot,c_\cdot$ will denote constants depending only the parameters listed in the subscript. Their values may change between occurrences.

We will focus first on the behavior of $X_i^n(t)$ for $t \in [0,T]$. At the end of the proof we will use the symmetry of the Mallows distribution to pass to the interval $[-T,T]$.

Consider the partition $0 = t_0 < t_1 < \ldots < t_N = T$ of $[0,T]$ into intervals of length $t_{l+1} - t_{l} = \frac{T}{N}$ for $N = n^7$. By the remark following equation \eqref{eq:intensity-approximation} the intensity of each counting process $\ell^{n}_{i}(t)$ is bounded by $C_Tn^2$, which easily implies that with high probability the whole process $(I_1(t), \ldots, I_n(t))$ makes at most one jump in each interval $[t_l, t_{l+1})$. Indeed, the probability that $\ell_i^n$ makes a jump in an interval of length $\frac{1}{N}$ is at most $\frac{C_Tn^2}{N}$ and by the strong Markov property, the probability of two jumps is at most $\frac{C_T^2 n^4}{N^2}$. Taking the union bound over all $N$ intervals and $i=1,\ldots,n$ shows that the probability that the process makes more than one jump in any of the intervals is bounded by $\frac{C_T^2 n^4 Nn}{N^2} = \frac{C_T^2 }{ n^2} \to 0$.

Thus with high probability we have for each $i=1, \ldots, n$ and $l=0,\ldots,N-1$
\begin{align*}
& \sup\limits_{t \in [t_l, t_{l+1})} \left| X_{i}^{n}(t) - z_{\frac{i}{n}, X_{i}^{n}(0)}(t) \right| \leq
\max \left\{ \left|X_{i}^{n}(t_l) - z_{\frac{i}{n}, X_{i}^{n}(0)}(t_{l}) \right|, \left|X_{i}^{n}(t_{l+1}) - z_{\frac{i}{n}, X_{i}^{n}(0)}(t_{l+1}) \right| \right\} \\
& + \max\left\{ \sup\limits_{t \in [t_l, t_{l+1})} \left| z_{\frac{i}{n}, X_{i}^{n}(0)}(t_l) - z_{\frac{i}{n}, X_{i}^{n}(0)}(t)\right|, \sup\limits_{t \in [t_l, t_{l+1})} \left| z_{\frac{i}{n}, X_{i}^{n}(0)}(t_{l+1}) - z_{\frac{i}{n}, X_{i}^{n}(0)}(t)\right| \right\}.
\end{align*}
Since $z_{x,a}(t)$ is continuous for $t \in [0,T]$ (uniformly in $x$ and $a$, see Lemma \ref{le:equicontinuity-of-z}), the second term is bounded by a deterministic quantity converging to $0$ as $n \to \infty$, which altogether implies
\[
\sup\limits_{t \in [0,T]} \left| X_{i}^{n}(t) - z_{\frac{i}{n}, X_{i}^{n}(0)}(t) \right| \leq \max\limits_{l = 0, \ldots, N} \left|X_{i}^{n}(t_l) - z_{\frac{i}{n}, X_{i}^{n}(0)}(t_{l}) \right| + o(1).
\]
Thus it is enough to show that $\left|X_{i}^{n}(t) - z_{\frac{i}{n}, X_{i}^{n}(0)}(t) \right|$ is small with high enough probability (uniformly over $t$) for any given $i=1,\ldots,n$ and $t \in [0,T]$. 

From now on we fix $t \in [0,T]$ and write for brevity $x_i = \frac{i}{n}$, $a_i = X_{i}^{n}(0)$.

We first show that specifying $a_i$ determines the rescaled number of inversions of $i$, $I_{i}(t)$, up to a small error. Recall the quantity $\Delta_R$, defined in \eqref{eq:definition-of-Delta}. For fixed $\varepsilon > 0$ by Corollary \ref{cor:ldp} applied to $R_i = \left[0, x_i \right] \times \left[a_i, 1\right]$ and $\beta = 0$ we get (noting that $\rho_{0}(x,y) \equiv 1$)
\[
\left| \Delta_{R_i}(\sigma_{1}^{n}) - x_i(1-a_i) \right| < \varepsilon
\]
with probability at least $1 - 2 e^{-c_{T,\varepsilon}n}$. Since $\Delta_{R_i}(\sigma_{1}^{n}) = \frac{1}{n} \left| \left\{ j \in [n] \, \colon \, j \in [0, x_i n], \sigma_{1}^{n}(j) \in [a_i n, n]  \right\}  \right| =  \frac{1}{n}\left| \left\{ j \in [n] \, \colon \, j \leq i, \, \sigma_{1}^{n}(j) \geq \sigma_{1}^{n}(i)  \right\}  \right|$, this differs from $\frac{1}{n} \ell_{i}^{n}(1) = I_{i}(0)$ by $\frac{1}{n}$ and thus
\begin{equation}\label{eq:inversion-at-0}
\left| I_{i}(0) - x_i (1-a_i) \right| < \varepsilon + \frac{1}{n}.
\end{equation}
Next, by Proposition \ref{prop:evolution-of-inversions} applied with $c_i = x_i(1-a_i)$ we get that with probability at least $1 - 2e^{-c_{T}\sqrt{n}}$, 
\begin{equation}\label{eq:inversions-at-t}
\left| I_{i}(t) - y_{i}(t) \right| < C_T \left( \left| I_{i}(0) - y_{i}(0) \right| + \frac{C_T}{n^{1/4}} \right),
\end{equation}
where $y_{i}(t)$ is the solution of
\[
\begin{cases}
& \dot{y}_{i}(t) = \lambda_{x_i}(y_{i}(t),t), \\
& y_{i}(0) = x_i(1-a_i),
\end{cases}
\]
given by the explicit formula \eqref{eq:explicit-solution} with $a = a_i$. Since we can choose $\varepsilon$ to be arbitrarily small, by combining \eqref{eq:inversion-at-0} with  \eqref{eq:inversions-at-t} together with a union bound we obtain that for any $\varepsilon > 0$ and large $n$, with probability at least $1 - 2e^{-c_{T,\varepsilon}\sqrt{n}}$ we have
\begin{equation}\label{eq:inversions-at-t-final}
\left| I_{i}(t) - y_{i}(t) \right| < \varepsilon
\end{equation}
for all $i \le n$.

Now that we know $I_{i}(t)$, we show that $X_{i}^{n}(t)$ itself is essentially determined by $x_i$ and $a_i$.

Let us write for simplicity $\sigma = \sigma^{n}_{e^{t/n}}$. By Lemma \ref{lm:density-lower-bound} we have that for $t \in [0,T]$ the density $\rho_{t}(x',y')$ is bounded from below by some $A_T > 0$ depending only on $T$.

By Corollary \ref{cor:ldp} applied with $\delta = \varepsilon$, $\beta = t$ we get that
\begin{equation}\label{eq:ldp-slices}
 \max\limits_{u,v\in[0,1]} \bigg| \Delta_{[0,u] \times [v,1]}(\sigma) - \int\limits_{v}^{1} \int\limits_{0}^{u} \rho_{t}(x',y') \, dx' \, dy' \bigg| < \varepsilon
\end{equation}
with probability at least $1 - 2e^{-c_{T, \varepsilon}n}$. 

Since $I_i(t) = \Delta_{\left[0,x_i\right]\times [X_i^n(t),1]} - \frac{1}{n}$, we thus obtain that with the same high probability, for all $i \le n$,
\[
\bigg| I_i(t) - \int\limits_{X_i^n(t)}^{1} \int\limits_{0}^{x_i} \rho_{t}(x',y') \, dx' \, dy' \bigg| < \varepsilon + \frac{1}{n}.
\]
Combined with \eqref{eq:inversions-at-t-final}, this shows that for large $n$, with probability at least $1 - 2e^{-c_{T,\varepsilon}\sqrt{n}}$ for all $i \le n$,
\[
  \bigg|y_i(t) - \int\limits_{X_i^n(t)}^{1} \int\limits_{0}^{x_i} \rho_{t}(x',y') \, dx' \, dy' \bigg| < 3\varepsilon.
\]

For $x \in [0,1]$ consider now a function $F_x\colon [0,1] \to [0,x]$, given by
\begin{equation}\label{eq:function-f-rho}
F_x(z) = \int\limits_{z}^{1} \int\limits_{0}^{x} \rho_{t}(x',y') \, dx' \, dy'.
\end{equation}
The last inequality can be thus written as $|y_i(t) - F_{x_i}(X_i^n(t))| < 3\varepsilon$.

Recall that $\rho_t$ is bounded from below by $A_T>0$. Thus, for $x > 0$, $F_x$ is strictly decreasing and onto $[0,x]$. Moreover, for all $z_1,z_2 \in [0,1]$, $|F_x(z_2) - F_x(z_1)| \ge A_Tx|z_1-z_2|$. In particular, with high probability for all $i \le n$,
\[
  \big|F_{x_i}^{-1}(y_i(t)) - X_{i}^{n}(t)\big| \le \frac{n}{i} \cdot\frac{1}{A_T}\big|y_i(t) - F_{x_i}(X_{i}^{n}(t))\big| \le \frac{3n}{iA_T}\varepsilon.
\]
Thus for any $\alpha \in (0,1)$ and $\varepsilon > 0$,
\[
\Pp\left( \max\limits_{i \in (\alpha n, (1-\alpha)n)} \sup\limits_{t \in [0,T]} \left| X_{i}^{n}(t) - F_{x_i}^{-1}(y_i(t)) \right| > \varepsilon \right) \xrightarrow{n \to \infty} 0.
\]
To conclude the proof on the interval $[0,T]$ it now suffices to show that $F_{x_i}^{-1}(y_i(t)) = z_{x_i, a}(t)$, i.e., $y_{i}(t) = F_{x_i}(z_{x_i, a}(t))$. Since $\rho_t$ appearing in the formula \eqref{eq:function-f-rho} is explicitly known (recall formula \eqref{eq:definition-of-rho-beta}, one can compute $F_x(z)$ (e.g., using symbolic computation software such as Mathematica), which yields
\[
F_x(z) = \frac{1}{t} \log \frac{e^{xt}(e^t - 1)}{e^{(x+z)t} - e^{xt} - e^{zt} + e^t}.
\]
Since $y_i(t)$ is given by \eqref{eq:explicit-solution} and $z_{x_i,a}(t)$ by \eqref{eq:formula-for-global-limit}, one can perform the substitution and check the desired equality.

It remains to pass from $[0,T]$ to $[-T,T]$. Let $\rev_n$ be the order reversing permutation of $[n]$, $\rev_n(i) = n - i + 1$ for $i \in [n]$. For $t \ge 0$ define $\widetilde{\sigma}^n_t = \rev_n \circ \sigma^{n}_{1/t}$ (with the convention $\sigma^{n}_{\infty} = \rev_n$). Then, as one can easily check, we have $\widetilde{\sigma}^{n}_{t} \sim \pi_{n,t}$. Moreover, since time reversal preserves the Markov property, $\widetilde{\sigma}^n$ is also a Markov process. Clearly $\ell_i(\widetilde{\sigma}^n_t) = i - 1 - \ell_i(\sigma^{n}_{1/t})$, which implies that $\ell_{i}(\widetilde{\sigma}^n_t) \leq \ell_i(\widetilde{\sigma}^n_{t'})$ whenever $t \leq t'$ and that the processes $(\ell_i(\widetilde{\sigma}^n_t))_{t \in [0, \infty)}$ are independent over $i \in [n]$.

Consider now the process $\widehat{\sigma}^{n}_{t} = \widetilde{\sigma}^{n}_{t+}$. Since the probability that $\sigma^n$ makes a jump at a deterministic time $t$ is zero, $\widehat{\sigma}$ is a modification of $\widetilde{\sigma}$ and so it has the same finite-dimensional distributions. It is thus a c\`{a}dl\`{a}g Markov process with values in $\Ss_n$ whose marginal distribution at time $t$ is equal to $\pi_{n,t}$. It also holds that $\Inv(\widehat{\sigma}_{t}^{n}) \leq \Inv(\widehat{\sigma}_{t-}^{n}) + 1$, which together with independence and monotonicity properties of the inversion counts $\ell_{i}$ implies that it is a regular Markov Mallows process. By Corsini's uniqueness result mentioned in the introduction (\cite[Theorem 1.1]{corsini}) it has the same distribution of paths as $\sigma^n$. In particular, from what we have already proved on $[0,T]$ we get
\[
\Pp\left( \max\limits_{i \in (\alpha n, (1-\alpha)n)} \sup\limits_{t \in [0,T]} \bigg| \frac{1}{n} \widehat{\sigma}^{n}_{e^{t/n}}(i) - z_{\frac{i}{n}, \frac{1}{n} \widehat{\sigma}^{n}_{1}(i)} (t)  \bigg| > \varepsilon \right) \xrightarrow{n \to \infty} 0.
\]
Since the limit $z_{x,a}(t)$ is continuous in $t$ and $\widehat{\sigma}^n_{1}(i) = \widetilde{\sigma}^n_{1}(i)$ almost surely, the same holds for $\frac{1}{n}\widetilde{\sigma}^n_{e^{t/n}}(i) = 1 - X_{i}^{n}(-t)$. In other words, we have
\[
\Pp\left( \max\limits_{i \in (\alpha n, (1-\alpha)n)} \sup\limits_{t \in [-T,0]} \left| X_{i}^{n}(t) - \left( 1 - z_{\frac{i}{n},1 - X_{i}^{n}(0)}(-t) \right) \right| > \varepsilon \right) \xrightarrow{n \to \infty} 0.
\]
This ends the proof, since as one can easily check from \eqref{eq:formula-for-global-limit}
\[
  1 - z_{x,1-a}(-t) = z_{x,a}(t).
\]
\end{proof}
\end{subsection}
\end{section}

\begin{section}{Local limit}\label{sec:local-limit}

\begin{subsection}{The Mallows distribution on $\Z$}\label{subsec:mallows-on-z}
\paragraph{Mallows distribution on $\Z$ -- definition and existence.} To set the stage for the proof of the local limit of the Mallows process, we begin by recalling the construction of the \emph{Mallows distribution on $\Z$} due to Gnedin and Olshanski (\cite{gnedin-olshanskii}), which gives the analog of the Mallows distribution $\pi_{n,q}$ for permutations of $\Z$. We will only mention results which are directly relevant to the proof of Theorem \ref{th:main-theorem-local} -- the reader is referred to \cite{gnedin-olshanskii} for a more in-depth discussion.

Recall that $\GSs$ denotes the set of all permutations of $\Z$ and $\GSs^{bal} \subseteq \GSs$ is the set of all balanced permutations. Let $\sigma_{i,i+1} \in \GSs$ denote the transposition swapping adjacent indices $i, i+1 \in \Z$. Given $q > 0$, we say that a measure $\mu$ on $\GSs$ is \emph{right $q$-exchangeable} if for every $i \in \Z$ the pushforward $\mu_{i,i+1}$ of $\mu$ under the map $\sigma \mapsto \sigma \sigma_{i, i+1}$ is absolutely continuous with respect to $\mu$ and the Radon-Nikodym derivative $\frac{d\mu_{i,i+1}}{d\mu}(\sigma)$ equals $q^{\sgn\left( \sigma(i+1) - \sigma(i) \right)}$. Likewise, we say that $\mu$ is \emph{left $q$-exchangeable} if the analogous conditions hold under maps $\sigma \mapsto \sigma_{i, i+1} \sigma$ with the Radon-Nikodym derivative $q^{\sgn\left( \sigma^{-1}(i+1) - \sigma^{-1}(i) \right)}$.

One readily checks that Mallows distributions $\pi_{n,q}$ on $\Ss_n$ satisfy the appropriately adjusted right and left $q$-exchangeability properties. The existence of a probability measure with such properties on $\GSs$ is nontrivial and was proved in \cite{gnedin-olshanskii}. More precisely, we have the following result (\cite[Theorem 3.3.]{gnedin-olshanskii})
\begin{theorem}\label{thm:GO-Mallows-existence}
  Let $q \in (0,1)$. The two notions of $q$-exchangeability on $\GSs^{bal}$ coincide. There exists a unique probability measure $\Pi_q$ on $\GSs^{bal}$ which is both right and left $q$-exchangeable.
\end{theorem}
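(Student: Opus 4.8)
The plan is to construct $\Pi_q$ as a projective limit of finite-dimensional Mallows-type distributions, using the left-inversion (or equivalently right-inversion) description of permutations that is already central to this paper. First I would parametrize balanced permutations of $\Z$ by their right-inversion coordinates: for a balanced $\sigma$ and each $i \in \Z$, set $r_i(\sigma) = \bigl|\{\, j > i : \sigma(j) < \sigma(i)\,\}\bigr|$, which is finite because $\sigma$ is balanced. One checks that the map $\sigma \mapsto (r_i(\sigma))_{i \in \Z}$ is a bijection from $\GSs^{bal}$ onto the set of integer sequences $(r_i)_{i \in \Z}$ satisfying the admissibility constraint that for every $n$ the truncated vector can be completed to a genuine permutation (this is the infinite analogue of the classical inversion-vector bijection recalled in the Preliminaries). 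Under the candidate measure $\Pi_q$, the $r_i$ should be \emph{independent} with $r_i$ geometric of parameter $1-q$ on $\{0,1,2,\ldots\}$, i.e. $\Pp(r_i = k) = (1-q)q^k$. One verifies this product measure is supported on admissible sequences (so it really defines a measure on $\GSs^{bal}$, using a Borel--Cantelli argument to see that the balance/finiteness condition holds almost surely for $q<1$), and this gives existence of \emph{a} measure $\Pi_q$ on $\GSs^{bal}$.

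Next I would check that this $\Pi_q$ is right $q$-exchangeable. Applying the adjacent transposition $\sigma \mapsto \sigma\sigma_{i,i+1}$ changes the right-inversion coordinates in a simple, local way: if $\sigma(i) < \sigma(i+1)$ then $r_i$ increases by $1$ and the other coordinates are unaffected (and symmetrically if $\sigma(i) > \sigma(i+1)$, $r_i$ decreases by $1$); here one must be slightly careful about which coordinate records the swap, but it is a one-coordinate change. Since $r_i$ is geometric with ratio $q$, the Radon--Nikodym derivative of the pushforward is exactly $q^{\pm 1} = q^{\sgn(\sigma(i+1)-\sigma(i))}$, as required. For left $q$-exchangeability one runs the same argument with left inversions $\ell_i$, or alternatively observes that $\sigma \mapsto \sigma^{-1}$ maps the product-of-geometrics-in-$r$ measure to the product-of-geometrics-in-$\ell$ measure and transports right exchangeability to left exchangeability; combined with the fact that on $\GSs^{bal}$ the two inversion parametrizations are linked by the balance condition, this shows the single measure $\Pi_q$ is simultaneously left and right $q$-exchangeable, and in particular that the two notions coincide.

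Finally, uniqueness: suppose $\mu$ is any right $q$-exchangeable probability measure on $\GSs^{bal}$. Iterating the Radon--Nikodym relation along a sequence of adjacent transpositions, one gets that the $\mu$-probability of any cylinder event (fixing $\sigma$ on a finite window) is determined up to a multiplicative constant by $q$ raised to the number of inversions created, exactly as in the finite Mallows formula; normalizing over the finitely many completions and letting the window grow forces the marginal law of any finite block of inversion coordinates to be the truncated-geometric one, hence $\mu = \Pi_q$. The main obstacle I expect is not any single estimate but the bookkeeping around balanced permutations: one must show the product-geometric inversion measure is genuinely concentrated on $\GSs^{bal}$ (not merely on injective maps $\Z \to \Z$ that fail to be onto or fail the balance condition), and one must track carefully how truncating to a finite window interacts with admissibility of inversion vectors — this is where the infinite setting departs from the clean finite bijection and is the part requiring the most care. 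I would lean on the detailed structural results of \cite{gnedin-olshanskii} for this step rather than redo it from scratch.
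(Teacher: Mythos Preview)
The paper does not prove this theorem: it is quoted as \cite[Theorem 3.3]{gnedin-olshanskii} and used as a black box, with the surrounding lemmas (Lemmas \ref{lm:recursive-formula-r}, \ref{lm:inversions-characterization} and Theorem \ref{thm:GO-Mallows-construction}) also imported from that reference. So there is no in-paper argument to compare against; your sketch is in fact a rough outline of the Gnedin--Olshanski construction that the paper cites, and your final sentence (``lean on the detailed structural results of \cite{gnedin-olshanskii}'') is exactly what the paper does.

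That said, your sketch contains a concrete error in the exchangeability step. The map $\sigma \mapsto \sigma\sigma_{i,i+1}$ is \emph{not} a one-coordinate change on the right-inversion vector. If $\sigma(i) < \sigma(i+1)$ (equivalently $r_i(\sigma) \le r_{i+1}(\sigma)$), then the pair $(r_i,r_{i+1})$ is sent to $(r_{i+1}+1,\,r_i)$; if $\sigma(i) > \sigma(i+1)$ (equivalently $r_i(\sigma) > r_{i+1}(\sigma)$), it is sent to $(r_{i+1},\,r_i-1)$. Both coordinates move. The Radon--Nikodym computation still works, because under i.i.d.\ geometric$(1-q)$ the density at $(a,b)$ is proportional to $q^{a+b}$ and the involution changes $a+b$ by exactly $\pm 1$; but the mechanism is a swap-and-shift, not the single increment you describe. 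Your hedge (``one must be slightly careful about which coordinate records the swap'') does not cover this: it is genuinely a two-coordinate map, and getting this wrong would also break your uniqueness argument, where you want to read off the marginal law of a \emph{single} $r_i$ from the exchangeability relation.

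A second soft spot is the claim that ``the two notions of $q$-exchangeability coincide'' follows from exhibiting one measure that enjoys both. It does not: you need uniqueness for \emph{each} notion separately (any right $q$-exchangeable measure equals $\Pi_q$, and likewise on the left), and only then does equivalence follow. Your uniqueness paragraph gestures at this but conflates ``cylinder event fixing $\sigma$ on a finite window'' with something tractable; on $\Z$ the values $\sigma(i)$ on a window are unbounded integers, so the relevant finite-dimensional marginals are those of blocks of inversion coordinates, and it is there (not on windows of $\sigma$-values) that the truncated-geometric identification must be carried out. This is precisely the bookkeeping you flag as the main obstacle, and it is indeed where the real work in \cite{gnedin-olshanskii} lies.
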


Furthermore, the Mallows distribution $\pi_{n,q}$ can be obtained by considering a restriction of $\Pi_{q}$ to the set of permutations of any interval $I \subseteq \Z$ of length $n$, where for $\sigma \in \GSs$ we identify $\sigma|_I$ with an element of $\Ss_n$ by relabeling the elements of $I$ and the elements of $\sigma(I)$ by the set $\{1,\ldots,n\}$ in increasing order.

Th measure $\Pi_q$ is called the Mallows measure on $\Z$ with parameter $q$. Let us remark that a related notion of the Mallows measure on the set of permutations of $\N$ was considered by Gnedin and Olshanski in an earlier work \cite{gnedin-olshanskii-1} (see also Remark \ref{rm:construction-on-n}).

\paragraph{Description of $\Pi_q$ by inversion counts.} For our purposes it will be important that the measure $\Pi_{q}$ can be described in terms of a family of independent geometric random variables. Recall that to sample a permutation $\sigma$ from the Mallows distribution $\pi_{n,q}$, one can first sample an inversion vector $(\ell_1, \ldots, \ell_n)$, with all coordinates independent and $\ell_i$ having a geometric distribution with parameter $1-q$, truncated at $i-1$, and then use the bijection between admissible inversion vectors and permutations of $[n]$. A somewhat analogous approach can be employed to sample permutations from $\Pi_q$. However, since on $\Z$ there is no smallest or largest element to start with, one needs a recursive procedure which we now describe.

For a permutation $\sigma \in \GSs$ we define the number of \emph{left inversions} of element $i$ as
\[
\ell_i(\sigma) = \left|\left\{ j \in \Z \, : \, j < i, \sigma(j) > \sigma(i) \right\}\right|.
\]
Likewise, the number of \emph{right inversions} of $i$ is defined as
\[
r_i(\sigma) = \left|\left\{ j \in \Z \, : \, j > i, \sigma(j) < \sigma(i) \right\}\right|.
\]
Note that a priori these numbers can be infinite. It is proved in \cite{gnedin-olshanskii} (Proposition 4.1) that $\ell_i(\sigma), r_i(\sigma)$ are in fact finite for all $i$ if the permutation $\sigma$ is balanced.

If one knows the left and right inversion numbers for a balanced permutation $\sigma \in \GSs^{bal}$, the permutation itself can be easily reconstructed by the following formula (\cite[Lemma 4.6]{gnedin-olshanskii})
\begin{equation}\label{eq:sigma-r-l}
\sigma(i) = i + r_i(\sigma) - \ell_i(\sigma).
\end{equation}
For $\sigma \sim \Pi_{q}$ the left inversion numbers are independent and identically distributed (\cite{gnedin-olshanskii}), with $\ell_i(\sigma)$ having geometric distribution with parameter $1-q$
\[
\Pp\left( \ell_i(\sigma) = j \right) = (1-q)q^{j}, \, \, \, j= 0, 1, 2,\ldots
\]
Because the roles of the left and right inversion numbers are symmetric (thanks to Theorem \ref{thm:GO-Mallows-existence}), the same applies to the right inversion numbers $r_i(\sigma)$. If one could compute the numbers $\ell_i(\sigma)$ from the (bi-infinite) sequence $\left(r_k(\sigma)\right)_{k \in \Z}$ or vice versa, a simple method for sampling from $\Pi_q$ would be available -- sample an infinite i.i.d. sequence of geometric variables, say $\left(\ell_k\right)_{k \in \Z}$, compute the corresponding $r_i$ and then employ formula \eqref{eq:sigma-r-l}.

Lemma 4.7 from \cite{gnedin-olshanskii} shows how to compute any $\ell_i$ recursively given $(r_k)_{k \in \Z}$. However, to be consistent with the description of the finite Mallows distributions $\pi_{n,q}$ in terms of the left rather than the right inversion vector, as we do in this paper and as is also done in \cite{corsini}, we would prefer to treat $\ell_i$ as independent and compute $r_i$ given $\left(\ell_k\right)_{k \in \Z}$. After making the necessary adjustments, the lemma reads as follows
\begin{lemma}[\cite{gnedin-olshanskii}, Lemma 4.7, version for left inversions]\label{lm:recursive-formula-r}
Let $\sigma \in \GSs^{bal}$ be a balanced permutation with the associated left and right inversion numbers $\ell_i = \ell_i(\sigma)$, $r_i = r_i(\sigma)$. Given $i \in \Z$, let $\ell_{i}^{(i)} = \ell_{i}$ and for $j \geq i$ define recursively
\[
\ell_{i}^{(j+1)} = \ell_{i}^{(j)} + \id_{\{ \ell_{i}^{(j)} \geq \ell_{j+1} \}}.
\]
Then we have
\begin{equation}\label{eq:recursive-formula-r}
r_i = \sum\limits_{j : \, j > i} \id_{\{ \ell_{i}^{(j)} < \ell_j \}} = \sum\limits_{j : \, j > i} \id_{\{ \ell_{i}^{(j-1)} = \ell_{i}^{(j)} \}}.
\end{equation}
\end{lemma}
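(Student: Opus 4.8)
The plan is to give the auxiliary numbers $\ell_i^{(j)}$ a transparent combinatorial meaning and then simply read off the two formulas for $r_i$. Specifically, for a fixed $i \in \Z$ I would prove, by induction on $j \ge i$, that
\[
\ell_i^{(j)} = \bigl| \{ k \in \Z \,:\, k \le j,\ \sigma(k) > \sigma(i) \} \bigr| .
\]
The base case $j=i$ is precisely the definition of $\ell_i=\ell_i(\sigma)$, since $\sigma(i)\not>\sigma(i)$ makes the conditions $k<i$ and $k\le i$ interchangeable here. Before starting the induction I would record that all quantities in play are finite: as $\sigma\in\GSs^{bal}$, each $\ell_j$ is finite by \cite[Proposition 4.1]{gnedin-olshanskii}, and then $\ell_i^{(j)}\le \ell_i+(j-i)<\infty$ for $j\ge i$; in particular every indicator in the statement is well defined and the sums in \eqref{eq:recursive-formula-r} are sums of nonnegative terms bounded by the finite number $r_i$.

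For the inductive step, assume the identity for some $j\ge i$; the crux is the equality of indicators
\[
\id_{\{\ell_i^{(j)}\ge \ell_{j+1}\}} \;=\; \id_{\{\sigma(j+1)>\sigma(i)\}},
\]
which, combined with the defining recursion $\ell_i^{(j+1)}=\ell_i^{(j)}+\id_{\{\ell_i^{(j)}\ge \ell_{j+1}\}}$, immediately promotes the identity from $j$ to $j+1$. To prove it I would compare the two finite sets $S=\{k\le j:\sigma(k)>\sigma(i)\}$, whose cardinality is $\ell_i^{(j)}$ by the inductive hypothesis, and $S'=\{k\le j:\sigma(k)>\sigma(j+1)\}$, whose cardinality is $\ell_{j+1}$ (using again $\sigma(j+1)\not>\sigma(j+1)$). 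If $\sigma(j+1)>\sigma(i)$ then $S'\subseteq S$, so $\ell_{j+1}\le\ell_i^{(j)}$. If instead $\sigma(j+1)<\sigma(i)$ --- note $\sigma(j+1)\ne\sigma(i)$ as $j+1\ne i$ --- then $S\subseteq S'$, and moreover $i\in S'\setminus S$ because $i\le j$, $\sigma(i)>\sigma(j+1)$, yet $\sigma(i)\not>\sigma(i)$; hence $\ell_{j+1}\ge\ell_i^{(j)}+1$. This is exactly the claimed indicator identity.

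With the identification of $\ell_i^{(j)}$ in hand, the two expressions for $r_i$ follow with little effort. Applying the indicator identity with $j-1$ in place of $j$ (legitimate since $j-1\ge i$ when $j>i$) and negating gives, for every $j>i$,
\[
\id_{\{\sigma(j)<\sigma(i)\}}=\id_{\{\ell_i^{(j-1)}<\ell_j\}}=\id_{\{\ell_i^{(j-1)}=\ell_i^{(j)}\}},
\]
the last step being formal from $\ell_i^{(j)}=\ell_i^{(j-1)}+\id_{\{\ell_i^{(j-1)}\ge\ell_j\}}$; summing over $j>i$ against $r_i=|\{j>i:\sigma(j)<\sigma(i)\}|$ yields the second formula in \eqref{eq:recursive-formula-r}. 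For the first formula I would observe that $\id_{\{\ell_i^{(j)}<\ell_j\}}=\id_{\{\ell_i^{(j-1)}<\ell_j\}}$: if $\ell_i^{(j-1)}<\ell_j$ then $\ell_i^{(j)}=\ell_i^{(j-1)}<\ell_j$, while if $\ell_i^{(j-1)}\ge\ell_j$ then $\ell_i^{(j)}=\ell_i^{(j-1)}+1>\ell_j$, so both indicators vanish. The argument is entirely elementary and I do not anticipate a real obstacle; the only points demanding care are organizing the inductive step so that the indicator identity is established there from the inductive hypothesis, and tracking which of the indices $i,j,j+1$ lies in each of the sets $S,S'$ in the cardinality comparison --- this bookkeeping is precisely what produces the dichotomy ``$\ell_i^{(j)}\ge\ell_{j+1}$'' versus ``$\ell_i^{(j)}<\ell_{j+1}$''.
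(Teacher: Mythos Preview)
Your proof is correct and follows exactly the approach the paper indicates: the paper does not give its own proof of this lemma (it cites \cite{gnedin-olshanskii}), but the remark immediately following the statement records precisely the characterization you establish by induction, namely that $\ell_i^{(j)}$ counts the elements greater than $\sigma(i)$ in $(\sigma(k))_{k\le j}$. Your derivation of the two formulas in \eqref{eq:recursive-formula-r} from this characterization is clean and complete.
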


Let us remark that $\ell_i^{(j)}$ can be characterized by the property that there are exactly $\ell_i^{(j)}$ elements greater than $\sigma(i)$ in the sequence $(\sigma(k))_{k\le j}$. In other words , $\ell_i^{(j)} + 1$ is the reverse rank of $\sigma(i)$ in $(\sigma(k))_{k\le j}$. %
The last fact that we will need is a characterization of admissible inversion numbers. Again we state the lemma with the roles of left and right inversions switched with respect to the statement in \cite{gnedin-olshanskii}.

\begin{lemma}[\cite{gnedin-olshanskii}, Theorem 7.1, version for left inversions]\label{lm:inversions-characterization}
A nonnegative integer sequence $\left( \ell_i \right)_{i \in \Z}$ occurs as a sequence of left inversions counts for some permutation $\sigma \in \GSs^{bal}$ if and only if the following two conditions hold:
\begin{enumerate}[(i)]
\item the values $\left( r_i \right)_{i \in \Z}$ determined from \eqref{eq:recursive-formula-r} are finite,
\item $\ell_i = 0$ for infinitely many $i \leq 0$.
\end{enumerate}
Under these conditions such $\sigma$ is unique and has right inversion counts $\left( r_i \right)_{i \in \Z}$.
\end{lemma}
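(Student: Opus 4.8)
The statement is precisely the left--inversion mirror of \cite[Theorem 7.1]{gnedin-olshanskii}, so the plan is to deduce it from that result by exploiting the left--right symmetry of inversions rather than redoing the whole reconstruction argument. First I would introduce the reflection $w\colon\Z\to\Z$, $w(i)=1-i$, and, for $\sigma\in\GSs$, the conjugate $\sigma^{\flat}=w\circ\sigma\circ w$, i.e. $\sigma^{\flat}(i)=1-\sigma(1-i)$. Since $w$ is an involution, so is $\sigma\mapsto\sigma^{\flat}$. A short direct check shows that $\sigma^{\flat}$ is balanced iff $\sigma$ is (the two defining cardinalities for $\sigma^{\flat}$ are those of $\sigma$ with their roles interchanged, hence equal and finite for one iff for the other), and that
\[
\ell_i(\sigma^{\flat})=r_{1-i}(\sigma),\qquad r_i(\sigma^{\flat})=\ell_{1-i}(\sigma)\qquad(i\in\Z).
\]
Consequently a nonnegative sequence $(\ell_i)_{i\in\Z}$ is the left--inversion sequence of some $\sigma\in\GSs^{bal}$ if and only if the reindexed sequence $(m_j)_{j\in\Z}$ defined by $m_j=\ell_{1-j}$ is the right--inversion sequence of some element of $\GSs^{bal}$ (namely $\sigma^{\flat}$), and in that case $\sigma$ and $\sigma^{\flat}$ determine one another.

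Next I would line up the two sets of hypotheses under this dictionary. The recursion \eqref{eq:recursive-formula-r} producing $(r_i)$ from $(\ell_k)$ was set up in Lemma \ref{lm:recursive-formula-r} as the ``version for left inversions'' of the recursion in \cite[Lemma 4.7]{gnedin-olshanskii}; reindexing by $w$ turns one into the other, so condition (i) here (finiteness of the $r_i$) is exactly the finiteness hypothesis that \cite[Theorem 7.1]{gnedin-olshanskii} imposes on the sequence $(m_j)$. Similarly, ``$\ell_i=0$ for infinitely many $i\le 0$'' becomes, under $i=1-j$, ``$m_j=0$ for infinitely many $j\ge 1$'', which is the other hypothesis of \cite[Theorem 7.1]{gnedin-olshanskii}. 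With the dictionary established, the ``if'' implication, the uniqueness of $\sigma$, and the identification of its right--inversion counts with $(r_i)$ all follow by applying \cite[Theorem 7.1]{gnedin-olshanskii} to $(m_j)$ and reflecting back; moreover the permutation so obtained agrees with the one read off from $(\ell_i)$ and $(r_i)$ via $\sigma(i)=i+r_i(\sigma)-\ell_i(\sigma)$ (formula \eqref{eq:sigma-r-l}).

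For the ``only if'' direction I would argue directly, which also serves as a consistency check of the reflection dictionary: if $(\ell_i)$ arises from some $\sigma\in\GSs^{bal}$, then all $r_i(\sigma)$ are finite by \cite[Proposition 4.1]{gnedin-olshanskii} and, by Lemma \ref{lm:recursive-formula-r}, they coincide with the $(r_i)$ produced by \eqref{eq:recursive-formula-r}, giving (i); and $\ell_i(\sigma)=0$ exactly when $\sigma(i)=\max_{j\le i}\sigma(j)$, while balancedness forces $\{\sigma(j):j\le 0\}$ to contain only finitely many positive integers, so $i\mapsto\max_{j\le i}\sigma(j)$ is a finite, nondecreasing, integer-valued function on $\Z_{\le 0}$ tending to $-\infty$ as $i\to-\infty$, hence it strictly increases at infinitely many $i\le 0$, each such $i$ having $\ell_i(\sigma)=0$. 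The only genuinely delicate step is the bookkeeping: carefully checking the reflection identities and confirming that \eqref{eq:recursive-formula-r} and condition (ii) really are the $w$-images of the corresponding ingredients in \cite[Lemma 4.7]{gnedin-olshanskii} and \cite[Theorem 7.1]{gnedin-olshanskii}, so that the reduction is rigorous and not merely plausible; everything else is a transcription of the Gnedin--Olshanski results.
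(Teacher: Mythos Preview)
Your reduction via the reflection $w(i)=1-i$ and $\sigma^{\flat}=w\circ\sigma\circ w$ is correct: the identities $\ell_i(\sigma^{\flat})=r_{1-i}(\sigma)$ and $r_i(\sigma^{\flat})=\ell_{1-i}(\sigma)$ check out, balancedness is preserved (the two defining cardinalities swap), and the reindexing carries the recursion of \cite[Lemma~4.7]{gnedin-olshanskii} and the hypotheses of \cite[Theorem~7.1]{gnedin-olshanskii} onto \eqref{eq:recursive-formula-r} and conditions (i)--(ii) here. Your direct ``only if'' argument is also fine; the step $M(i):=\max_{j\le i}\sigma(j)\to-\infty$ follows because $M$ is nondecreasing and integer-valued, and if it were eventually constant equal to $m$ then for $i$ to the left of the unique preimage of $m$ one would get $M(i)<m$, a contradiction.

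As for the comparison: the paper does not supply a proof of this lemma at all. It is stated as a direct quotation of \cite[Theorem~7.1]{gnedin-olshanskii} with the remark that ``we state the lemma with the roles of left and right inversions switched'', and the same is done for Lemma~\ref{lm:recursive-formula-r}. Your reflection argument is precisely the verification that this switch is legitimate, so you are supplying the justification that the paper leaves implicit. In that sense your approach is not different from the paper's---it is the natural (and essentially the only) way to make the cited transposition rigorous.
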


\begin{theorem}[\cite{gnedin-olshanskii}, Theorem 4.3, version for left inversions] \label{thm:GO-Mallows-construction}
Let $\ell_i$, $i \in \Z$, be independent random variables distributed according to the geometric distribution with parameter $1-q$. Let $r_i$ be defined by \eqref{eq:recursive-formula-r}. Then with probability one, all $r_i$'s are finite and formula \eqref{eq:sigma-r-l} defines a balanced permutation $\sigma$. Moreover, $\sigma$ has law $\Pi_q$.
\end{theorem}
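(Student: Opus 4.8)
The plan is to prove Theorem~\ref{thm:GO-Mallows-construction} by reducing it to the already-cited results of Gnedin--Olshanski, namely Lemma~\ref{lm:recursive-formula-r}, Lemma~\ref{lm:inversions-characterization}, Theorem~\ref{thm:GO-Mallows-existence}, and the (stated) fact that under $\Pi_q$ the left inversion numbers $\ell_i(\sigma)$ are i.i.d.\ geometric with parameter $1-q$. The essential point to check is that the ``left-inversion'' version of Gnedin--Olshanski's construction indeed produces a sample of $\Pi_q$, given that their original construction was phrased in terms of right inversions.

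First I would address almost-sure finiteness of the $r_i$'s and the fact that $(\ell_i)_{i\in\Z}$ satisfies condition (ii) of Lemma~\ref{lm:inversions-characterization}. Condition (ii) is immediate from the Borel--Cantelli lemma: since the $\ell_i$ are i.i.d.\ with $\Pp(\ell_i=0)=1-q>0$, almost surely $\ell_i=0$ for infinitely many $i\le 0$. For condition (i), the finiteness of $r_i$, I would either invoke the corresponding statement in \cite{gnedin-olshanskii} directly (their Theorem~4.3 in the right-inversion formulation asserts exactly this, and the left/right symmetry of the setup transfers it), or give a short direct argument: $\ell_i^{(j)}$ is nondecreasing in $j$ and increases by $1$ precisely when $\ell_i^{(j)}\ge \ell_{j+1}$; since the $\ell_{j+1}$ are i.i.d.\ geometric and hence unbounded, a routine estimate shows $\ell_i^{(j)}$ stabilizes after finitely many steps almost surely, so the sum in \eqref{eq:recursive-formula-r} is finite. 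Granting (i) and (ii), Lemma~\ref{lm:inversions-characterization} gives that there is a unique $\sigma\in\GSs^{bal}$ with left inversion counts $(\ell_i)$ and right inversion counts $(r_i)$, and formula \eqref{eq:sigma-r-l} recovers it; so $\sigma$ is well-defined as a random element of $\GSs^{bal}$.

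The main step is identifying the law of $\sigma$ as $\Pi_q$. The cleanest route is to transport Gnedin--Olshanski's original result through the natural symmetry exchanging left and right inversions. Concretely, consider the map $\sigma\mapsto \sigma^{-1}$ (or, equivalently, the reflection $i\mapsto -i$ composed appropriately), which swaps the roles of $\ell_i$ and $r_i$: one has $\ell_i(\sigma) = r_{?}(\sigma^{-1})$ in a precise index-shifted sense, and it maps $\GSs^{bal}$ to itself. Under this map, our left-inversion construction becomes exactly the right-inversion construction of \cite[Theorem~4.3]{gnedin-olshanskii}, which produces a sample of the Mallows measure; and since $\Pi_q$ is invariant under this symmetry (this is precisely the content of Theorem~\ref{thm:GO-Mallows-existence}, which says the left and right $q$-exchangeability notions coincide and single out the same measure $\Pi_q$), we conclude that $\sigma\sim\Pi_q$. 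I would spell out the index bookkeeping carefully, since that is where errors creep in: the recursion in Lemma~\ref{lm:recursive-formula-r} runs ``forward'' ($j>i$) whereas the right-inversion recursion runs ``backward'', and one must check the reflection reverses the order consistently with the definitions of $\ell_i$ and $r_i$.

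The main obstacle is precisely this last identification: making the left/right symmetry argument rigorous requires pinning down the correct bijection on $\GSs^{bal}$ under which $(\ell_i)_{i\in\Z}$ and $(r_i)_{i\in\Z}$ get interchanged, verifying that it carries the i.i.d.\ geometric input for the left construction to the i.i.d.\ geometric input for the right construction (the geometric law is symmetric here so this is fine), and confirming that $\Pi_q$ is fixed by it. An alternative, more self-contained but more laborious route would bypass the symmetry: verify directly that the random permutation $\sigma$ built from i.i.d.\ geometric $\ell_i$ via \eqref{eq:recursive-formula-r} and \eqref{eq:sigma-r-l} satisfies both right and left $q$-exchangeability on $\GSs^{bal}$, and then invoke the uniqueness clause of Theorem~\ref{thm:GO-Mallows-existence}; checking $q$-exchangeability amounts to computing the effect of an adjacent transposition on the inversion data and tracking the resulting Radon--Nikodym factor, which is doable but notationally heavy. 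I would present the symmetry argument as the main proof and relegate the index computations to a remark or the Appendix.
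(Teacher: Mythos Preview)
The paper does not prove Theorem~\ref{thm:GO-Mallows-construction}; it is stated as a citation of \cite[Theorem~4.3]{gnedin-olshanskii} (reformulated for left inversions) and no proof follows it. So there is no paper proof to compare against, and your proposal stands on its own.

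Your outline is largely sound, but two points need correcting. First, in your direct argument for finiteness of $r_i$ you write that $\ell_i^{(j)}$ ``stabilizes after finitely many steps''. This is backwards: by \eqref{eq:recursive-formula-ljt} the sequence $\ell_i^{(j)}$ increments precisely when $\ell_i^{(j)}\ge \ell_{j+1}$, and each non-increment contributes $1$ to $r_i$ via \eqref{eq:recursive-formula-r}. What you need is that $\ell_i^{(j)}$ eventually increments at \emph{every} step (and hence tends to infinity), not that it stabilizes. The heuristic is still right---once $\ell_i^{(j)}$ is large, the event $\{\ell_i^{(j)}<\ell_{j+1}\}$ has small geometric tail---but the argument has to be phrased accordingly.

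Second, your symmetry route via $\sigma\mapsto\sigma^{-1}$ or a reflection is more work than needed. The paper already records (in the paragraph preceding Theorem~\ref{thm:GO-Mallows-construction}) that under $\Pi_q$ the left inversion counts $(\ell_i(\sigma))_{i\in\Z}$ are i.i.d.\ geometric with parameter $1-q$. Combined with Lemma~\ref{lm:recursive-formula-r} and \eqref{eq:sigma-r-l}, this says that a $\Pi_q$-distributed $\sigma$ is a measurable function of its own i.i.d.\ geometric left-inversion sequence. Hence, if you start from an abstract i.i.d.\ geometric sequence $(\ell_i)$, verify that it a.s.\ satisfies the hypotheses of Lemma~\ref{lm:inversions-characterization} (your Borel--Cantelli argument plus the corrected finiteness of $r_i$), and apply the same measurable map, the output has law $\Pi_q$ directly---no reflection bookkeeping or $q$-exchangeability check required. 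This is both shorter and avoids the index-tracking you flagged as the main obstacle.
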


\end{subsection}

\begin{subsection}{Construction of the limiting process}\label{subsec:construction-of-limit}
Based on the description of the Mallows distribution in terms of the left and right inversions numbers we can now construct the limiting process $\left( \Sigma_{t} \right)_{t \in [0,1)}$. Similarly as for the Mallows process on $\Ss_n$, the construction will be based on a family of independent processes defining the left inversion numbers. It will be slightly more convenient to first define these processes for $t \in [0, \infty)$ and then perform a time change to $[0,1)$.

Let us thus consider a family of independent  c\`{a}dl\`{a}g time-homogeneous Markov birth processes $( \widetilde{\ell}_i(t) )_{t \in [0,\infty)}$, $i \in \Z$, with $\widetilde{\ell}_i(t)$ taking values in nonnegative integers, and having rate of jumping from $j$ to $j+1$ at time $t$ given by $\widetilde{q}(j) = j+1$. We take $\widetilde{\ell}_i(0) = 0$. Let $\tau : [0, 1) \to [0,\infty)$ be a (deterministic) change of time given by $\tau(t) = - \log(1-t)$ and let us define for any $t \in [0,1)$, $\ell_i(t) = \widetilde{\ell}_i(\tau(t))$. It is readily seen that $( \ell_i(t) )_{t \in [0,1)}$ is a time-inhomogeneous Markov birth process with birth rate $q(j, t) = \widetilde{q}(j)\tau'(t)$ equal to
\begin{equation}\label{eq:limiting-intensity}
q(j,t) = \frac{j+1}{1-t}, \, \, \, j = 0, 1, 2, \ldots
\end{equation}
The reader may notice that the jump rate is the same as for the inversions of Mallows process on $\Ss_n$, up to a term which is small if $t < 1$ and $n \to \infty$ (recall formula \eqref{eq:def-of-rates}). This observation will form the basis of the proof of Theorem \ref{th:main-theorem-local}.

\begin{proposition}
The birth process with birth rates $q(j,t)$ given by \eqref{eq:limiting-intensity} and starting from $0$ is non-explosive for $t \in [0,1)$. Moreover, the marginal distribution of the process at time $t \in [0,1)$ is the geometric distribution with parameter $1-t$.
\end{proposition}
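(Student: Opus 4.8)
The plan is to transfer the statement to the time-homogeneous process $(\widetilde{\ell}_i(t))_{t \in [0,\infty)}$, for which the claim is essentially the classical distributional identity for the Yule process. First I would record two elementary facts about the time change $\tau(t) = -\log(1-t)$: it is a continuous strictly increasing bijection of $[0,1)$ onto $[0,\infty)$, so $(\ell_i(t))_{t \in [0,1)}$ explodes at a finite time $t_\ast < 1$ if and only if $(\widetilde{\ell}_i(t))_{t \in [0,\infty)}$ explodes at the finite time $\tau(t_\ast)$; and $e^{-\tau(t)} = 1-t$, which will convert the geometric parameter at the end. Thus it suffices to prove that $\widetilde{\ell}_i$ is non-explosive on $[0,\infty)$ and that $\widetilde{\ell}_i(t)$ is geometric with parameter $e^{-t}$.

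For this I would work with the one-dimensional marginals $\widetilde{p}_j(t) = \Pp(\widetilde{\ell}_i(t) = j)$. For a (minimal) birth process these are $C^1$ and satisfy the Kolmogorov forward equations
\[
\widetilde{p}_j'(t) = -(j+1)\,\widetilde{p}_j(t) + j\,\widetilde{p}_{j-1}(t), \qquad j \ge 0,
\]
with $\widetilde{p}_{-1} \equiv 0$ and $\widetilde{p}_j(0) = \id_{\{j=0\}}$ (standard, see e.g.\ \cite{norris}). Because of the triangular structure this system has a unique solution: $\widetilde{p}_0$ is fixed by the linear ODE $\widetilde{p}_0' = -\widetilde{p}_0$, and then each $\widetilde{p}_{j}$ is determined by the previous one. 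A direct computation shows that $\widetilde{p}_j(t) = e^{-t}(1-e^{-t})^j$ solves the system, and $\sum_{j \ge 0} e^{-t}(1-e^{-t})^j = 1$ for every $t$. Hence $\widetilde{p}_j(t) = e^{-t}(1-e^{-t})^j$; since the total mass equals $1$ for all $t$ this says precisely that $\widetilde{\ell}_i$ is non-explosive, and the marginal of $\widetilde{\ell}_i(t)$ is geometric with parameter $e^{-t}$. Alternatively, non-explosion is immediate from the classical criterion for birth processes, since $\sum_{j \ge 0}(j+1)^{-1} = \infty$, while the marginal is the familiar identity for the Yule process $\widetilde{\ell}_i + 1$ and may also be obtained from the probability generating function $G(s,t) = e^{-t}/\bigl(1 - (1-e^{-t})s\bigr)$, which solves $\partial_t G = (s-1)\,\partial_s(sG)$ by characteristics.

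Finally I would push the time change back through: using $e^{-\tau(t)} = 1-t$ we get, for $t \in [0,1)$ and $j = 0,1,2,\ldots$,
\[
\Pp(\ell_i(t) = j) = \Pp\bigl(\widetilde{\ell}_i(\tau(t)) = j\bigr) = (1-t)\,t^{j},
\]
so $\ell_i(t)$ is geometric with parameter $1-t$, and non-explosion on $[0,1)$ follows from the non-explosion of $\widetilde{\ell}_i$. The only step with genuine content is the non-explosion claim — without it ``the marginal distribution at time $t$'' is not even a probability measure — and it reduces either to the one-line identity $\sum_j (j+1)^{-1} = \infty$ or to observing that the explicit geometric solution of the forward equations has total mass $1$; the rest is the routine Yule computation.
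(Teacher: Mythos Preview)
Your proof is correct and follows essentially the same approach as the paper: non-explosion is reduced via the time change $\tau(t)=-\log(1-t)$ to the homogeneous process and settled by the criterion $\sum_j(j+1)^{-1}=\infty$, and the marginal is obtained by writing down and solving the forward equations. The only cosmetic difference is that the paper solves the forward equations directly for the time-inhomogeneous rates $q(j,t)=(j+1)/(1-t)$ and checks $P_{0,j}(t)=(1-t)t^j$, whereas you solve them for the Yule process and then push the time change through; the two computations are equivalent under the substitution $e^{-\tau(t)}=1-t$.
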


\begin{proof}
We need to show that the process almost surely makes only finitely many jumps on any compact interval $[0,T]$. It is enough to show that the same holds for the process before the time change, that is, the process on $[0,\infty)$ with jump rates $\widetilde{q}(j) = j+1$. This follows directly from the non-explosion criterion for time-homogeneous birth processes (\cite[Theorem 6.8.17]{grimmett-stirzaker}), since $\sum\limits_{j=0}^{\infty}\widetilde{q}(j)^{-1} = \sum\limits_{j=0}^{\infty}\frac{1}{j+1} = + \infty$.

To find the marginal distribution at time $t$ we solve the Chapman--Kolmogorov forward equation -- if $P_{0,j}(t)$ is the probability of the process being at $j$ at time $t$, starting from $0$, then we have for any $t \in [0,1)$
\[
P'_{0,0}(t) = - q(0,t)P_{0,0}(t)
\]
and
\[
P'_{0,j}(t) = q(j-1,t)P_{0,j-1}(t) - q(j,t)P_{0,j}(t)
\]
for $j \geq 1$, with $P_{0,j}(0) = \delta_{0j}$. One then directly checks that $P_{0,j}(t) = (1-t)t^{j}$, $j \geq 0$, is a solution, and since it is unique, the marginal distribution at time $t$ is geometric with parameter $1-t$.
\end{proof}

\begin{remark}
From the above proposition it follows in particular that each of the processes $\ell_i$ makes infinitely many jumps in the interval $[0,1)$.
\end{remark}

Now for a given $t \in [0,1)$ we define $r_{i}(t)$ to be the quantity $r_i$ computed from \eqref{eq:recursive-formula-r} when the sequence $\left( \ell_{i}(t) \right)_{i \in \Z}$ is used as $\left(\ell_{i}\right)_{i \in \Z}$, i.e., for $j \geq i$ we take
\begin{equation}\label{eq:recursive-formula-ljt}
\ell_{i}^{(j+1)}(t) = \ell_{i}^{(j)}(t) + \id_{\{ \ell_{i}^{(j)}(t) \geq \ell_{j+1}(t) \}},
\end{equation}
with $\ell_{i}^{(i)}(t) = \ell_{i}(t)$, and define
\begin{equation}\label{eq:recursive-formula-rt}
r_i(t) = \sum\limits_{j : \, j > i} \id_{\{ \ell_{i}^{(j)}(t) < \ell_j(t) \}} = \sum\limits_{j : \, j > i} \id_{\{ \ell_{i}^{(j-1)}(t) = \ell_{i}^{(j)}(t) \}}.
\end{equation}

Finally, for any $t \in [0,1)$ and $i \in \Z$ we let
\begin{equation}\label{eq:formula-for-sigma}
\Sigma_t(i) = i + r_i(t) - \ell_i(t).
\end{equation}

\begin{proposition}\label{prop:construction-of-limiting-sigma}
The process $\left( \Sigma_t \right)_{t \in [0,1)}$ defined by formula \eqref{eq:formula-for-sigma} is a c\`{a}dl\`{a}g $\GSs^{bal}$-valued  process whose marginal distribution at time $t$ is the Mallows distribution $\Pi_{t}$ with parameter $1-t$.
\end{proposition}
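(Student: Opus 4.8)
The plan is to argue in three stages: the fixed-time marginals, the almost sure well-definedness of the whole process on $[0,1)$, and the c\`adl\`ag property of the coordinate paths. For the marginal at a fixed $t\in(0,1)$ I would note that by construction $(\ell_i(t))_{i\in\Z}=(\widetilde\ell_i(\tau(t)))_{i\in\Z}$ is an independent family, and by the preceding proposition each $\ell_i(t)$ is geometric with parameter $1-t$. Thus Theorem~\ref{thm:GO-Mallows-construction} (the version for left inversions) applies verbatim with $q=t$: almost surely all $r_i(t)$ from \eqref{eq:recursive-formula-rt} are finite, \eqref{eq:formula-for-sigma} defines an element $\Sigma_t\in\GSs^{bal}$, and $\Sigma_t\sim\Pi_t$. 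The case $t=0$ is immediate, since $\ell_i(0)\equiv 0$, so $r_i(0)\equiv 0$ and $\Sigma_0=\Id$.

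The substantive step is to upgrade these fixed-$t$ statements to a single almost sure event on which $\Sigma_t$ makes sense for all $t\in[0,1)$ simultaneously and each path $t\mapsto\Sigma_t(i)$ is c\`adl\`ag. Both follow from one uniform estimate: for every $i\in\Z$ and $T\in[0,1)$ there is an almost surely finite random index $J=J(i,T)$ beyond which every summand in \eqref{eq:recursive-formula-rt} vanishes simultaneously for all $t\in[0,T]$, so that $r_i(t)=\sum_{j=i+1}^{J}\id_{\{\ell_i^{(j-1)}(t)=\ell_i^{(j)}(t)\}}$ for all $t\le T$. To prove this I would combine two bounds. First, from the recursion \eqref{eq:recursive-formula-ljt} and $\ell_i^{(j)}(t)\ge 0$, $\ell_i^{(j)}(t)\ge \#\{k:\,i<k\le j,\ \ell_k(t)=0\}\ge \#\{k:\,i<k\le j,\ \ell_k(T)=0\}$ for $t\le T$ (the last step by monotonicity of $\ell_k$ in time); since the $\ell_k(T)$ are i.i.d.\ with $\Pp(\ell_k(T)=0)=1-T>0$, a Chernoff bound and Borel--Cantelli give that a.s.\ this count exceeds $\tfrac{1-T}{2}(j-i)$ for all large $j$. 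Second, the geometric tails of the i.i.d.\ variables $\ell_j(T)$ give, via Borel--Cantelli, $\ell_j(t)\le\ell_j(T)\le C_T\log j$ for all large $j$. Comparing the linear lower bound with the logarithmic upper bound, for $j$ large we get $\ell_i^{(j-1)}(t)\ge\ell_j(t)$, hence $\ell_i^{(j-1)}(t)=\ell_i^{(j)}(t)$ fails, for every $t\le T$ at once, which is exactly the claimed truncation.

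With the truncation in hand the rest is bookkeeping. Intersecting over $i\in\Z$ and over $T=1-\tfrac1m$, $m\in\N$, the relevant probability-one events --- including, for each $m$, the event that $\ell_i(1-\tfrac1m)=0$ for infinitely many $i\le0$, which holds a.s.\ by the second Borel--Cantelli lemma and independence over $i$ --- I obtain one full-measure event on which, for every $t<1$: $r_i(t)$ is a finite sum, hence finite (condition (i) of Lemma~\ref{lm:inversions-characterization}), and $\{i\le0:\ell_i(t)=0\}\supseteq\{i\le0:\ell_i(1-\tfrac1m)=0\}$ is infinite for $m$ large (condition (ii)); so Lemma~\ref{lm:inversions-characterization} gives $\Sigma_t\in\GSs^{bal}$ for all $t<1$. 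On each $[0,T]$ the truncation also displays $t\mapsto r_i(t)$ as a finite sum of indicators, each a deterministic function of finitely many of the paths $t\mapsto\ell_k(t)=\widetilde\ell_k(\tau(t))$ --- which are c\`adl\`ag and jump finitely often on $[0,T]$, $\tau$ being continuous increasing and each $\widetilde\ell_k$ a pure birth process --- so $t\mapsto r_i(t)$, and hence $t\mapsto\Sigma_t(i)=i+r_i(t)-\ell_i(t)$, is c\`adl\`ag with finitely many jumps on every compact subinterval of $[0,1)$. Therefore $(\Sigma_t)_{t\in[0,1)}$ is a permutation path with values in $\GSs^{bal}$; that it is a path in $\DD=D([0,1),\GSs^{bal})$ follows from the description of the topology on $\GSs^{bal}$ recorded in the Appendix, and the marginal identification was obtained in the first stage. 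I expect the genuine obstacle to be the uniform truncation of the second paragraph: one must rule out that $r_i(t)$ blows up, or that $\Sigma_\cdot(i)$ accumulates jumps, at some exceptional random time. This is delicate precisely because the individual processes $t\mapsto r_i(t)$ are \emph{not} monotone (a single increment of some $\ell_m$ with $m>i$ can make $r_i$ jump down), so one cannot simply dominate $r_i(t)$ by $r_i(T)$ and must control the tail of the sum \eqref{eq:recursive-formula-rt} directly.
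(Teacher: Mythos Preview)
Your argument is correct and complete in substance. The overall architecture---fixed-time marginals via Theorem~\ref{thm:GO-Mallows-construction}, then a uniform-in-$t$ truncation of the series \eqref{eq:recursive-formula-rt} on each $[0,T]$, then c\`adl\`ag as a finite composition of c\`adl\`ag pieces---matches the paper's proof exactly. The difference lies in how the truncation index $J(i,T)$ is produced.

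The paper obtains it by a comparison trick: pick $i_1\ge i$ with $\ell_{i_1}(T)=0$ and show by induction that $\ell_i^{(j)}(t)\ge \ell_{i_1}^{(j)}(T)$ for all $j\ge i_1$ and $t\le T$; then invoke the already-known finiteness of $r_{i_1}(T)$ (from Theorem~\ref{thm:GO-Mallows-construction} at the single time $T$) to find $i_2$ beyond which $\ell_{i_1}^{(j)}(T)\ge \ell_j(T)\ge \ell_j(t)$, hence the tail of \eqref{eq:recursive-formula-rt} vanishes uniformly in $t\le T$. Your route is instead quantitative: you lower-bound $\ell_i^{(j)}(t)$ by the count of zeros among $\ell_{i+1}(T),\ldots,\ell_j(T)$, which grows linearly in $j$ by a Chernoff/Borel--Cantelli argument, and upper-bound $\ell_j(t)\le\ell_j(T)$ by $C_T\log j$ via geometric tails and Borel--Cantelli. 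Both arguments are short; the paper's has the virtue of recycling Theorem~\ref{thm:GO-Mallows-construction} rather than redoing tail estimates, while yours gives an explicit growth rate for the truncation index and avoids the inductive comparison.

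One small point to make explicit: to conclude that $t\mapsto\Sigma_t$ is c\`adl\`ag \emph{as a $\GSs^{bal}$-valued map} (not merely coordinatewise), you should check that the pointwise left limits $\Sigma_{t-}$ lie in $\GSs^{bal}$, since $\GSs^{bal}$ is not closed under pointwise limits in the space of all maps $\Z\to\Z$. The paper does this by observing that the left-limit inversion data $(\ell_i(t-))_{i\in\Z}$ satisfy the same two hypotheses of Lemma~\ref{lm:inversions-characterization}; your uniform truncation gives this immediately as well, since the finite-sum representation of $r_i$ and the inclusion $\{i\le0:\ell_i(t-)=0\}\supseteq\{i\le0:\ell_i(T)=0\}$ persist at left limits.
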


\begin{proof}
In the proof we will assume without loss of generality that on the whole probability space the trajectories of $\ell_i$ are c\`{a}dl\`{a}g, nondecreasing on $[0,1)$ and have jumps equal to one.

For any $t \in (0,1)$, $\ell_i(t)$, $i \in \Z$, are i.i.d. geometric variables with parameter $1-t$. Thus, by Theorem \ref{thm:GO-Mallows-construction}, $\Sigma_t$ is with probability one a well-defined balanced permutation, moreover it is distributed according to $\Pi_t$.

To conclude the proof, it is enough to show that for any $T \in (0,1)$, there exists an event $A_T$ with $\Pp(A_T) = 1$, such that on $A_T$ the trajectories $t \mapsto \Sigma_t$, $t \in [0,T]$, are $\GSs^{bal}$-valued and c\`{a}dl\`{a}g.

Let
\begin{align*}
A_T = \Big\{&\ell_i(T) = 0 \textrm{ for infinitely many $i \ge 0$ and for infinitely many $i \le 0$},\\
&\; r_i(T) < \infty \textrm{ for all $i \in \Z$} \Big\}.
\end{align*}

Theorem \ref{thm:GO-Mallows-construction} implies that $\Pp(A_T)=1$. From now on we will be working with a fixed $\omega \in A_T$, which most of the time will be suppressed in the notation. We will first prove that for any $i \in \Z$, $t\mapsto \Sigma_t(i)$ is a c\`{a}dl\`{a}g function for $t \in [0,T]$.

Fix thus $i \in \Z$ and let $i_1 \ge i$ be such that $\ell_{i_1}(T) = 0$. Recall \eqref{eq:recursive-formula-ljt} and \eqref{eq:recursive-formula-rt}.
Clearly, for any $t \le T$ we have $\ell_i^{(i_1)}(t) \ge 0 = \ell_{i_1}(T) = \ell_{i_1}^{(i_1)}(T)$. It follows by induction that for any $j \ge i_1$,
\begin{align}\label{eq:elle-comparison}
  \ell_i^{(j)}(t) \ge \ell_{i_1}^{(j)}(T).
\end{align}
Indeed, if \eqref{eq:elle-comparison} holds for some $j$, then
\begin{displaymath}
  \ell_i^{(j+1)}(t) = \ell_i^{(j)}(t) + \id_{\{ \ell_{i}^{(j)}(t) \geq \ell_{j+1}(t) \}} \ge \ell_{i_1}^{(j)}(T) + \id_{\{ \ell_{i_1}^{(j)}(T) \geq \ell_{j+1}(T) \}} = \ell_{i_1}^{(j+1)}(T),
\end{displaymath}
where the inequality follows from the induction assumption and the estimate $\ell_{j+1}(t) \le \ell_{j+1}(T)$.

From \eqref{eq:elle-comparison} it follows in particular that for $j \ge i_1$, if $\ell_{i_1}^{(j)}(T) \ge \ell_{j}(T)$, then $\ell_i^{(j)}(t) \ge \ell_j(t)$. By the finiteness of $r_{i_1}(T)$, there exists $i_2 \ge i_1$ such that the former inequality holds for all $j > i_2$. Thus for all $t \le T$ and $j > i_2$ we have
\begin{displaymath}
  \ell_i^{(j)}(t) \ge \ell_j(t).
\end{displaymath}
In particular, it follows from \eqref{eq:recursive-formula-ljt} and \eqref{eq:recursive-formula-rt} that for $t \le T$, $r_i(t)$ is fully determined by $\ell_i(t),\ldots,\ell_{i_2 + 1}(t)$, i.e., there exists a function $f = f_{T,i, \omega} \colon \N^{i_2-i+2} \to \N$ such that for all $t \leq T$, $r_i(t) = f(\ell_i(t),\ldots,\ell_{i_2 + 1}(t))$. Since $\ell_i(\cdot)$ are c\`{a}dl\`{a}g and take integer values, $r_i(\cdot)$ is also c\`{a}dl\`{a}g on $[0,T]$ and by \eqref{eq:formula-for-sigma}, so is the trajectory $t\mapsto \Sigma_t(i)$.

Moreover, for all $t \le T$, $\ell_i(t) = 0$ for infinitely many $i \le 0$ and $r_i(t) < \infty$ for all $i\in \Z$, which by Lemma \ref{lm:inversions-characterization} implies that $\Sigma_t \in \GSs^{bal}$. The same holds for $\ell_i(t-)$ and $r_i(t-)$, which proves that for all $t \in (0,T]$ the function $\Sigma^{-}_{t} \colon \Z \to \Z$ defined by $\Sigma^{-}_{t}(i) = i + r_i(t-) - \ell_i(t-)$ is also an element of $\GSs^{bal}$. Since the topology on $\GSs^{bal}$ that we consider is that of pointwise convergence, the c\`{a}dl\`{a}g property of $\ell_i(\cdot)$ and $r_i(\cdot)$ implies that $t \mapsto \Sigma_t$ is also a c\`{a}dl\`{a}g $\GSs^{bal}$-valued function. This ends the proof of the proposition.
\end{proof}

\begin{remark}
Thanks to the bijection between admissible inversion numbers and balanced permutations (Lemma \ref{lm:inversions-characterization}), the process $\left( \Sigma_t \right)_{t \in [0,1)}$ is a Markov process, since $\ell_{i}(t)$, $i \in \Z$, are Markov processes.
\end{remark}

In the proposition below we show that $\Sigma_t$ is a transposition process, that is, whenever it jumps, the permutations before and after the jump differ by a transposition $\sigma_{ij}$ of two (not necessarily adjacent) elements. An analogous property for the Mallows process on $\Ss_n$ was proved in \cite[Theorem 1.2]{corsini}.

\begin{proposition}\label{prop:sigma-is-transposition}
With probability one the following holds -- if the process $\left( \Sigma_t \right)_{t \in [0,1)}$ jumps at time $s \in (0,1)$, then letting $\Sigma_{s-} = \sigma$, $\Sigma_{s} = \sigma'$ we have $\sigma^{-1} \circ \sigma' = \sigma_{ij}$ for some $i,j \in \Z$, where $\sigma_{ij}$ is the transposition swapping $i$ and $j$.
\end{proposition}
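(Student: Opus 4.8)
The plan is to trace through what happens to the inversion vectors $(\ell_i(t))_{i \in \Z}$ and $(r_i(t))_{i\in\Z}$ at a jump time $s$, and then use the reconstruction formula \eqref{eq:formula-for-sigma} to show the resulting change in $\Sigma_t$ is exactly a transposition. First I would observe that since the processes $\widetilde\ell_i$ are independent c\`adl\`ag birth processes, with probability one no two of them jump simultaneously and every jump is an increment by $1$; this is the event we work on. So if $\Sigma_t$ jumps at time $s$, there is a unique index $k \in \Z$ with $\ell_k(s) = \ell_k(s-) + 1$ and $\ell_i(s) = \ell_i(s-)$ for all $i \neq k$. Write $\sigma = \Sigma_{s-}$ and $\sigma' = \Sigma_s$.

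Next I would analyze how the recursion \eqref{eq:recursive-formula-ljt}--\eqref{eq:recursive-formula-rt} reacts to bumping a single coordinate $\ell_k$ up by one. The key structural fact, which I would phrase using the characterization noted after Lemma \ref{lm:recursive-formula-r} — namely that $\ell_i^{(j)} + 1$ is the reverse rank of $\sigma(i)$ among $(\sigma(m))_{m \le j}$ — is the following. For $i > k$, the recursion for $\ell_i^{(j)}$ never consults $\ell_k$, so $r_i$ and hence $\Sigma_s(i) = \Sigma_{s-}(i)$ are unchanged. For $i < k$, increasing $\ell_k$ by one can change $\ell_i^{(j)}$ for $j \ge k$ by at most $1$ (an easy induction on $j$, using that the indicator $\id_{\{\ell_i^{(j)} \ge \ell_{j+1}\}}$ can flip by at most one when its arguments each move by at most one), so each $r_i(s) - r_i(s-) \in \{-1,0,1\}$, and also $\ell_i(s) = \ell_i(s-)$; thus $\Sigma_s(i) - \Sigma_{s-}(i) \in \{-1,0,1\}$. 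Finally, for $i = k$ we have $\ell_k(s) = \ell_k(s-)+1$, and $r_k$ is computed from $\ell_k^{(j)}$, $j > k$, which starts one higher; here too the change in $r_k$ is controlled and $\Sigma_s(k) - \Sigma_{s-}(k)$ is a bounded integer. The cleaner route, rather than tracking all these signs by hand, is: $\sigma$ and $\sigma'$ are both balanced permutations (Lemma \ref{lm:inversions-characterization}), they agree outside a finite set $F$ (everything with $i > k$ and all large $|i|$ is fixed, by the finiteness of $r_i(T)$ argument already used in the proof of Proposition \ref{prop:construction-of-limiting-sigma}), and $\sum_i(\Inv\text{-type count})$ changes by exactly one — more precisely, $\sum_{i}\ell_i$ increases by exactly $1$ while $\sum_i r_i$ changes by the same amount, so the composition $\sigma^{-1}\circ\sigma'$ is a finitely-supported permutation whose associated inversion count changes by $1$. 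A permutation of $\Z$ supported on a finite set that differs from the identity by a single unit of inversion count is a transposition of two elements $i,j$ (indeed, on the finite support it is an element of some $\Ss_n$ with exactly one more inversion than the identity restricted there — wait, that would force an adjacent transposition; instead one argues directly that $\sigma$ and $\sigma'$ agree except that two values are swapped). I would make this precise by showing directly that there exist $i \ne j$ with $\sigma(i) = \sigma'(j)$, $\sigma(j) = \sigma'(i)$, and $\sigma(m) = \sigma'(m)$ otherwise: since $\sigma, \sigma'$ are bijections of $\Z$ agreeing off a finite set and the multiset $\{\sigma(m) : m \in F\} = \{\sigma'(m) : m \in F\}$, the restriction $\sigma^{-1}\circ\sigma'$ permutes $F$; one then checks via the inversion-count bookkeeping that this permutation of $F$ has exactly one nontrivial cycle and that cycle has length $2$.

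The cleanest implementation, which I would actually carry out, uses the reverse-rank interpretation directly: at time $s-$, list the ranks; bumping $\ell_k$ up by one means the element $\sigma(k)$ gets "demoted" past exactly one element, say $\sigma(j)$ with $j < k$, in the left-to-right scan, so $\sigma(k)$ and $\sigma(j)$ are the two values whose images under $\Sigma$ are exchanged; everything else's reverse rank relative to the scan is unaffected. Translating "the $\ell_k^{(j)}$-indexed element that $\sigma(k)$ now sits behind" into the statement $\sigma^{-1}\circ\sigma' = \sigma_{jk}$ is the content of the proof. The main obstacle is precisely this last translation — showing that the single unit increment in one left-inversion count propagates through \eqref{eq:recursive-formula-rt} to produce exactly a pairwise swap and nothing more; once the bookkeeping is set up (carefully isolating the unique index where an indicator in \eqref{eq:recursive-formula-ljt} flips), the rest is immediate from \eqref{eq:formula-for-sigma} and \eqref{eq:sigma-r-l}.
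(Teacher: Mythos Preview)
Your proposal circles the right idea but never lands the proof, and the route you sketch is much harder than necessary. The middle portion --- tracking how each $r_i$ changes through the recursion \eqref{eq:recursive-formula-ljt}--\eqref{eq:recursive-formula-rt}, and the ``total inversion count changes by one'' argument --- is, as you yourself notice, not going to work cleanly (a unit change in $\sum_i \ell_i$ does not force a $2$-cycle; it forces an \emph{adjacent} transposition only if the support were already identified). Your final paragraph is closer: you correctly locate the second index as the position $i'$ whose value $\sigma(i')$ sits immediately below $\sigma(i)$ in the ranking of $(\sigma(m))_{m\le i}$. But you then declare the verification an ``obstacle'' and stop.

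The paper's proof bypasses all recursion bookkeeping by a guess-and-verify argument. Having identified the unique index $i$ with $\ell_i(\sigma')=\ell_i(\sigma)+1$, one \emph{defines} $i'$ as the position of the $(\ell_i(\sigma)+2)$-nd largest element among $(\sigma(m))_{m\le i}$, sets $\widetilde\sigma=\sigma\circ\sigma_{ii'}$, and checks directly that $\ell_m(\widetilde\sigma)=\ell_m(\sigma)$ for $m\neq i$ and $\ell_i(\widetilde\sigma)=\ell_i(\sigma)+1$. This check is elementary: for $m>i$ the multiset $\{\sigma(j):j<m\}$ is unchanged; for $m\le i$ with $m\neq i$ one uses that $\sigma(i)$ and $\sigma(i')$ are \emph{consecutive} in the ranking of $(\sigma(j))_{j\le i}$, so no $\sigma(m)$ lies strictly between them and swapping them cannot flip any comparison $\sigma(j)>\sigma(m)$. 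Since $\widetilde\sigma$ is balanced (right inversion counts stay finite under a transposition) and has the same left-inversion sequence as $\sigma'$, Lemma~\ref{lm:inversions-characterization} gives $\sigma'=\widetilde\sigma$, and you are done. The point is that uniqueness of the inversion-vector-to-permutation map lets you avoid ever touching the $r_i$ recursion; you should use it rather than trying to push changes through \eqref{eq:recursive-formula-rt}.
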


\begin{proof}
Since the process $\Sigma_t$ jumps only when one of the processes $\ell_j(t)$ jumps and almost surely the jumps are not simultaneous, there exists exactly one $i \in \Z$ such that $\ell_{i}(\sigma') = \ell_{i}(s) = \ell_{i}(s-) + 1 = \ell_{i}(\sigma) + 1$ and $\ell_{j}(\sigma') = \ell_{j}(\sigma)$ for $j \neq i$. Note that by construction of the process $\Sigma_t$ all the left and right inversion numbers $\ell_{j}(\sigma)$, $\ell_{j}(\sigma')$, $r_{j}(\sigma)$, $r_{j}(\sigma')$ are almost surely finite.

Let us suppose that $\ell_{i}(\sigma) = k$ for some $k \geq 0$, so that $\sigma(i)$ is the $(k+1)$-st largest element in the sequence $(\sigma(j))_{j \leq i}$. Let $\sigma(i')$ be the $(k+2)$-nd largest element in this sequence (note that such an element must exist since otherwise $\ell_i(\sigma)$ would not be finite). Let $\sigma_{i i'}$ be the transposition swapping $i$ and $i'$ and consider the permutation $\widetilde{\sigma}= \sigma \circ \sigma_{i i'}$. Note that $\widetilde{\sigma}$ has exactly the same left inversion numbers $\ell_{j}$ as $\sigma$ except that $\ell_{i}(\widetilde{\sigma}) = \ell_{i}(\sigma) + 1$. It is also easy to see that the right inversion numbers $r_{j}(\widetilde{\sigma})$ are finite, since $r_{j}(\sigma)$ were finite and this property is preserved after transposing two elements. As by Lemma \ref{lm:inversions-characterization} the left inversion numbers uniquely characterize balanced permutations and $\sigma'$, $\widetilde{\sigma}$ have the same left inversion sequences, we must have $\sigma' = \widetilde{\sigma}$, i.e., $\sigma^{-1} \circ \sigma' = \sigma_{ii'}$, which ends the proof.
\end{proof}
\end{subsection}

\begin{remark}\label{rm:construction-on-n}
The process $\left( \Sigma_t \right)_{t \in [0,1)}$ constructed in this subsection will serve as the local limit of the Mallows process for elements which start far from the boundary of the interval $\{1, \ldots, n\}$ (recall the condition $k_n$, $n - k_n \to \infty$ in the statement of Theorem \ref{th:main-theorem-local}). For elements close to $1$ or $n$ one should instead consider an analogous process, but having values in permutations of $\N$ or $-\N$ rather than $\Z$. For the construction, say on $\N$, it is enough to replace $\ell_{i}(t)$, $i \in \Z$, with processes $\ell_i(t)$, $i \in \N$, such that for fixed $t$ the marginal distribution of $\ell_i(t)$ is the geometric distribution with parameter $1-t$ truncated at $i-1$ (as in the Mallows distribution on $\Ss_n$).  One then easily checks that the permutation $\Sigma_t$ constructed by means of formula \eqref{eq:formula-for-sigma} indeed has Mallows distribution on $\N$ with parameter $t$ (see \cite{gnedin-olshanskii-1} for the construction of this measure). The case of $-\N$ is analogous.
\end{remark}

\begin{subsection}{Proof of Theorem \ref{th:main-theorem-local}}\label{sec:proof-of-local-limit}

We can now proceed to the proof of the local limit. Recall that $\left( \sigma^{n}_t \right)_{t \in [0,1)}$ is the Mallows process on $\Ss_n$, with $\ell_{i}^{n}(t) = \ell_{i}\left(\sigma_{t}^{n}\right)$ denoting the $i$-th coordinate of the left inversion vector at time $t$, $i=1, \ldots, n$. Recall the definition \eqref{eq:shifted-process-n} of the shifted process $\Sigma^{n}_t$ -- for a sequence $k_n \in \Z$ such that $k_n, n - k_n \to \infty$, and any $i \in \Z$, $t \in [0,1)$ we let
\[
\Sigma^{n}_{t}(i) = \sigma^{n}_{t}\left( k_n + i \right) - k_n,
\]
where we take $\Sigma^{n}_{t}(i) = i$ if $k_n + i \notin [n]$.
\begin{proof}[Proof of Theorem \ref{th:main-theorem-local}]

We will construct a coupling $\widetilde{\Sigma}, \widetilde{\Sigma}^n$, $n \ge 1$, of the processes $\Sigma$ and $\Sigma^n$ such that
with probability one the following holds -- for any $T \in [0,1)$ and $i \in \Z$ there exists $n_0$ such that for all $n > n_0$ and $t \in [0,T]$ we have $\widetilde{\Sigma}^n_t(i) = \widetilde{\Sigma}(i)$. This in particular implies that for any $T \in [0,1)$, $\sup_{t \in [0,T]} \rho(\widetilde{\Sigma}^n(t),\widetilde{\Sigma}(t)) \to 0$ with probability one, where $\rho$ metrizes pointwise convergence of permutations. It follows that $\widetilde{\Sigma}^n \to \widetilde{\Sigma}$ a.s. in $\DD$ and as a consequence $\Sigma^n$ converges weakly to $\Sigma$.

Note that the existence of such a coupling also implies that for any $T \in [0,1)$ and $N \in \N$, the processes $(\Sigma^n_t(i))_{i \in \{-N,\ldots,N\},t \in [0,T]}$ converge in the total variation distance to the process $(\Sigma_t(i))_{i \in \{-N,\ldots,N\}, t\in [0,T]}$, as stated in Remark \ref{re:total-variation}.

In what follows to simplify the notation we will denote the constructed coupled processes just by $\Sigma^n$ and $\Sigma$.

Let thus $\Sigma$ be a Mallows process on $\GSs^{bal}$ constructed via \eqref{eq:formula-for-sigma}, where $\ell_i$ have jump rates $q(\cdot,\cdot)$ given by \eqref{eq:limiting-intensity} (in particular $\ell_i(t)$ has the geometric distribution with parameter $1-t$). Recall also the definition \eqref{eq:def-of-rates} of jump rates $p_i(j,t)$ for the inversion counts of the Mallows process $\sigma^n$. Additionally, let us set $p_i(j,t) = 0$ for $j \ge i$.

The idea of the proof is to define $\sigma^n$ using the inversion counts $\ell^n_i$ constructed by a dependent thinning of the jumps of $\ell_{i - k_n}$. Each jump of the latter process will be preserved with certain (conditional) probability, in such a way that the birth process defined by the preserved jumps will be a Markov process with jump rates $p_i$. The thinning will be implemented through independent random variables, uniform on $(0,1)$. Since the jump rates $p_{i}$ are close to $q$ for large $n$, this will allow us to show that with probability one the process $\ell^n_i$ constructed this way coincide with $\ell_{i-k_n}$ for any interval $[0,T]\subseteq [0,1)$ and $n$ large enough (depending on $T$).

Let thus $U_{i,j}$, $i\in \Z, j\in \Z_+$ be a family of i.i.d. random variables uniform on $(0,1)$, independent of $\Sigma$. For $i \in \Z$ and $j\in \Z_+$ let $T_{i,j}$ be the time of the $j$-th jump of $\ell_i$ on $[0,1)$ and let $T_{i,0} = 0$.

Let us also for $i, n\in \Z_+, j \in \N$ define inductively the random variables $X^n_{i,j}$ as
\begin{align*}
  X^n_{i,0} &= 0
\end{align*}
and
\begin{align}\label{eq:definition-of-X}
  X^n_{i,j+1}  =
  \begin{cases}
  X^n_{i,j} + 1, &\textrm{ if } U_{i-k_n,j+1} \le \frac{p_i(X^n_{i,j},T_{i-k_n,j+1})}{q(\ell_{i-k_n}(T_{i-k_n,j+1}-),T_{i-k_n,j+1})},\\
  X^n_{i,j}, &\textrm{ otherwise.}
  \end{cases}
\end{align}
The variables $X^n_{i,j}$ will keep track of which jumps of the process $\ell_{i - k_n}$ have been preserved under the thinning. We remark that by definition of $T_{i,j}$ we have $\ell_{i-k_n}(T_{i-k_n,j+1}-) = j$, but as it will become clear in the sequel, it will be convenient to write $X^n_{i,j}$ in terms of this more complicated expression.

Finally, we define the processes $\ell^n_i$, $1\le i\le n$, by setting for $t \in [T_{i-k_n,j},T_{i-k_n,j+1})$,
\begin{align}\label{eq:definition-of-l-n-i}
  \ell^n_i(t) = X^n_{i,j}.
\end{align}
Note that the process $\ell^n_i$ jumps only by one and is allowed to jump only at times of jumps of $\ell_{i-k_n}$. Whether the $j$-th jump of $\ell_i$ becomes a jump of $\ell^n_i$ depends on the auxilliary variable $U_{i-k_n,j}$ as well as on the past of the process $\ell^n_i$.

Since for a while we are going to work with fixed $i$ and $n$, to simplify the notation we will temporarily write $T_j$ instead of $T_{i-k_n,j}$ and $U_j$ instead of $U_{i-k_n,j}$.

We will consider two filtrations. The first one is $\mathcal{G} = (\mathcal{G}_t)_{t\ge 0}$, where $\mathcal{G}_t$ is the $\sigma$-field generated by all events of the form
$A = \{\ell_i(t_1) = l_1,\ldots,\ell_i(t_m) = l_m, \ell_i(t) = l, U_1\in B_1,\ldots,U_l\in B_l\}$, where $m,l_1,\ldots,l_m,l\in \N$, $0\le t_1<\ldots t_m < t$ and $B_1,\ldots,B_l$ are Borel subsets of the real line. The second one is $\widetilde{\mathcal{G}} = (\widetilde{\mathcal{G}}_t)_{t\ge 0}$, where $\widetilde{\mathcal{G}}_t$ is the $\sigma$-field generated by all events of the form
$A = \{\ell_i(t_i) = l_1,\ldots,\ell_i(t_m) = l_m, \ell_i(t) = l, U_1\in B_1,\ldots,U_{l-1}\in B_{l-1}\}$, where $m,l_1,\ldots,l_m,l\in \N$, $0\le t_1<\ldots t_m < t$ and $B_1,\ldots,B_{l-1}$ are Borel subsets of the real line. Thus the difference between the two filtration is in the `information' about the value of $U_l$.

Note that $\ell_i^n$ is $\mathcal{G}$-adapted. We will first show that $\ell^n_i$ are counting processes with intensities $\lambda_i(t) = p_i(\ell^n_i(t),t)$ with respect to $\mathcal{G}$ (the relevant notions are recalled briefly in the Appendix for the reader's convenience).

Indeed, consider $0 \leq t < s$ and any $A\in \mathcal{G}_t$. Note that $A\cap \{t < T_{j} \le s\}$ is $\widetilde{\mathcal{G}}_{T_j}$ measurable for any $j \geq 0$. Moreover $U_j$ is independent of $\widetilde{\mathcal{G}}_{T_j}$. Observe that (crucially) $p_i(j,t) \le q(l,t)$ for any $j,l \in \N$ such that $j \le l$. In particular, we have $\frac{p_i(\ell^n_i(T_j-),T_{j})}{q(\ell_{i-k_n}(T_{j}-),T_{j})} \le 1$ and we can thus write
\begin{align*}
  \E \ell^n_i(s)\id_{A} - \E \ell^n_i(t)\id_{A}  & = \sum_{j=1}^\infty \E \Big(\id_{A\cap \{t < T_{j} \le s\}} \id\Big\{U_{j} \le \frac{p_i(\ell^n_i(T_j-),T_{j})}{q(\ell_{i-k_n}(T_{j}-),T_{j})}\Big\}\Big)\\
   & = \sum_{j=1}^\infty \E \Big(\id_{A\cap \{t < T_{j} \le s\}}\E\Big( \id\Big\{U_{j} \le \frac{p_i(\ell^n_i(T_j-),T_{j})}{q(\ell_{i-k_n}(T_{j}-),T_{j})}\Big\}\Big|\widetilde{\mathcal{G}}_{T_j}\Big)\\
& = \sum_{j=1}^\infty \E \Big(\id_{A\cap \{t < T_{j} \le s\}} \frac{p_i(\ell^n_i(T_j-),T_{j})}{q(\ell_{i-k_n}(T_{j}-),T_{j})}\Big) \\
&= \E \id_{A}\int_{(t,s]} \frac{p_i(\ell^n_i(u-),u)}{q(\ell_{i-k_n}(u-),u)}d\ell_{i - k_n}(u),
\end{align*}
where we used independence of $U_j$ and $\widetilde{\mathcal{G}}_{T_j}$ as well as the fact that $\ell^n_i(T_j-)$ is $\widetilde{\mathcal{G}}_{T_j}$-measurable.

The process $M_t = \ell_i(t) - \int_0^t q(\ell_i(u),u)du$ is a martingale with respect to the natural filtration of $\ell_i$. Using the $\pi-\lambda$ system theorem and independence of $U_i$'s from $\ell_i$ it is easy to show that it is also a martingale with respect to $\mathcal{G}$. Moreover, the process $Z:= (p_i(\ell^n_i(u-),u)/q(\ell_{i-k_n}(u-),u))_{u\ge 0}$ is $\mathcal{G}$-adapted and left-continuous, and thus $\mathcal{G}$-predictable. Thus, $\int_0^t Z_u dM_u$ is a $\mathcal{G}$-martingale and we can further write
\begin{displaymath}
  \E \ell^n_i(s)\id_{A} - \E \ell^n_i(t)\id_{A} = \E \id_{A}\int_t^s \frac{p_i(\ell^n_i(u-),u)}{q(\ell_{i-k_n}(u-),u)} q(\ell_{i - k_n}(u),u) du
  = \E \id_{A}\int_t^s p_i(\ell^n_i(u),u)du,
\end{displaymath}
where we used that $\ell_i$ has only countably many jumps on $[0,1)$ (in fact finitely many on any compact interval). This shows that $\ell^n_i(t) - \int_0^t p_i(\ell^n_i(u),u)du$ is a $\mathcal{G}$-martingale and so $\ell^n_i(t)$ is indeed a counting process with intensity $\lambda_i(t) = p_i(\ell^n_i(t),t)$. Thus, by Lemma \ref{lm:from-intensity-to-markov-rate} from the Appendix, $\ell^n_i$ is a Markov process on $\{0,\ldots,i-1\}$ with jump rates $p_i$. In particular, the  permutation process $\sigma^n_t$ defined by the requirement that $\ell^n_i$, $i\in [n]$, be its left inversion counts is a birth Mallows process.

We will use the process $\sigma^n$ to define $\Sigma^n$ coupled with $\Sigma$. Set
\begin{displaymath}
  \Sigma^n_t(i) = \sigma^n_t(k_n+i) - k_n
\end{displaymath}
for $i = -k_n+1,\ldots,n-k_n$ and $\Sigma^n_t(i) = i$ otherwise.

We will show that for any $N \in \N$ there exists a random variable $M_N < \infty$ such that, with probability one, for $n \ge M_N$, all $i \in \{-N,\ldots,N\}$ and $t \le T$ we have the equality $\Sigma^n_t(i) = \Sigma_t(i)$.

Observe first that $\Sigma^n_t$ is a balanced permutation of $\Z$ with left inversion counts $\ell_i(\Sigma^n_t) = \ell_{i+k_n}^n$ for $i \in \{-k_n+1,\ldots,n-k_n\}$ and 0 otherwise.
We have
\begin{displaymath}
\Sigma^n_t(i) = i + r_i(\Sigma^n_t) - \ell_i(\Sigma^n_t).
\end{displaymath}
where $r_i(\Sigma^n_t)$ are the right inversion counts, which can be obtained from $\ell_i(\Sigma_n^t)$ using formulas \eqref{eq:recursive-formula-ljt} and \eqref{eq:recursive-formula-r}.

Therefore, it suffices to show that for any $i \in \Z$, with probability one for $n$ large enough and all $t \in [0,T]$ we have $\ell_i(\Sigma^n_t) = \ell_i(t)$ and $r_i(\Sigma^n_t) = r_i(t)$.

In what follows we will work with a fixed element of the probability space, denoted by $\omega$. All the quantities appearing in the argument may depend on $\omega$. Let $i_1 \ge i$ be such that $\ell_{i_1}(T) = 0$. Recall that $\ell_i(\Sigma^n_t)$ equals either to zero or to $\ell^n_{i+k_n}(t)$, and so $\ell_i(\Sigma^n_t) \le \ell_i(t) \le \ell_i(T)$. Using this inequality, similarly as in the proof of Proposition \ref{prop:construction-of-limiting-sigma}, we show that there exists an integer $i_2 \geq i_1$ and a function $f=f_{T,i, \omega} \colon \N^{i_2-i+2} \to \N$ such that $r_i(\Sigma^n_t) = f(\ell_i(\Sigma^n_t),\ldots,\ell_{i_2+1}(\Sigma^n_t))$ and $r_i(t) = f(\ell_i(t),\ldots,\ell_{i_2+1}(t))$.
Therefore, if $\ell_m(\Sigma^n_t) = \ell_m(t)$ for all $m \in \{i,\ldots,i_2+1\}$ and $t \le T$, then also $r_i(\Sigma^n_t) = r_i(t)$ and as a consequence $\Sigma^n_t(i) = \Sigma_t(i)$ for $t \le T$.

Note that since $k_n, n-k_n \to \infty$, for $n$ large enough and all $m \in \{i, \ldots, i_2+1\}$ we have $\ell_m(\Sigma^n_t) = \ell^n_{m+k_n}(t)$. Thus, recalling \eqref{eq:definition-of-l-n-i},
\begin{align}\label{eq:ells-to-exes}
  \ell_m(\Sigma^n_t) = X^n_{m+k_n,j} \textrm{ for } t \in [T_{m,j},T_{m,j+1}), \; j=0,1,\ldots,
\end{align}
where $X^n_{i,j}$ are defined in \eqref{eq:definition-of-X}.

Let $A = \max \{\ell_{m}(T)\colon m \in \{i,\ldots,i_2+1\}\}$ and $U = \max\{U_{m,j}\colon m \in \{i,\ldots,i_2+1\}, j \in [A]\} < 1$. Observe that for any $j \in \{0,\ldots,A\}$ and $m \in \{i,\ldots,i_2+1\}$ the jump rate
\begin{displaymath}
  p_{m+k_n}(j,t) = \frac{1}{1-t} \left( j+1 - \frac{(m+k_n) t^{m+k_n-j-1}(t^{j+1} - 1)}{t^{m+k_n} - 1} \right)
\end{displaymath}
converges to
\begin{displaymath}
  q(j,t) = \frac{j+1}{1-t}
\end{displaymath}
as $n \to \infty$, uniformly over $t \in [0,T] \subseteq [0,1)$.

In particular, there exists $M=M_{T, i, \omega}$ such that for $n > M$, $m \in \{i,\ldots,i_2+1\}$, all $t \in [0,T]$ and $j \le A$, \eqref{eq:ells-to-exes} holds and we have
\begin{displaymath}
  \frac{p_{m+k_n}(j,t)}{q(j,t)} > U.
\end{displaymath}

When combined with \eqref{eq:definition-of-X} and easy induction, this shows that all the jumps of $\ell_m$ in $[0,T]$ will be preserved when passing to $\ell^n_{m+k_n}$, i.e., $X^n_{m+k_n,j} = \ell_m(T_{m,j})$ for all $m \in \{i,\ldots,i_2 + 1\}$ and $j \in \{0,\ldots, T_{m,\ell_m(T)}\}$. By \eqref{eq:ells-to-exes} this implies that for $n > M$ indeed $\ell_m(\Sigma^n_t) = \ell_m(t)$ for all $m \in \{i,\ldots,i_2+1\}$ and $t \in [0,T]$, and thus $\Sigma^n_t(i) = \Sigma_t(i)$. This ends the proof of the theorem.
\end{proof}
\end{subsection}
\end{section}

\begin{section}{Appendix}\label{sec:appendix}

\paragraph{Skorokhod topology on permutation paths.} Let us first recall the basic definitions related to the Skorokhod topology on $D([0,1),\GSs^{bal})$. We will follow the exposition in \cite[Chapter 3.5]{MR0838085}, where the case $D([0,\infty),E)$ is considered for an arbitrary metric space $(E,r)$ (we refer also to \cite{MR1943877} and \cite{MR1700749}). We will rely on their definitions, with an identification of $[0,1)$ with $[0,\infty)$ via a homeomorphism $\tau$. For concreteness let us take $\tau(t) = -\log(1-t)$, which is the same function as used in Subsection \ref{subsec:construction-of-limit}.

To metrize the topology of pointwise convergence on $\GSs^{bal}$, let us endow it with the metric $\rho(\sigma,\eta) = \frac{1}{3}\sum_{n\in \Z} 2^{-|n|} \id_{ \{\sigma(i) \neq \eta(i)\}}$ (the factor $\frac{1}{3}$ is introduced just to make $r$ bounded by one, which allows to simplify the formulas from \cite[Chapter 3.5]{MR0838085}). Let $\Lambda$ be the collection of strictly increasing, Lipschitz functions $\lambda$, mapping $[0,\infty)$ onto itself, such that
\begin{displaymath}
  \gamma(\lambda) := \sup_{s > t \ge 0}\Big|\log \frac{\lambda(s) - \lambda(t)}{s-t}\Big| < \infty.
\end{displaymath}

We define the space $\DD = D([0,\infty),\GSs^{bal})$ as the space of all c\`{a}dl\`{a}g functions on $[0,1)$ with values in $\GSs^{bal}$.

For $x,y \in \DD$, $\lambda \in \Lambda$ and $u \ge 0$ we set
\begin{displaymath}
  d(x,y,\lambda,u) = \sup_{t \in [0,\infty)} \rho(\widetilde{x}(t\wedge u), \widetilde{y}(\lambda(t)\wedge u)),
\end{displaymath}
where $\widetilde{x} = x\circ \tau$, $\widetilde{y} = y\circ \tau$. Finally, we define
\begin{displaymath}
  d(x,y) = \inf_{\lambda \in \Lambda} \max \Big(\gamma(\lambda), \int_0^\infty e^{-u}d(x,y,\lambda,u)du\Big).
\end{displaymath}
Then $d$ is a metric and the space $(\DD, d)$ is separable and complete.

Let us comment that if $x_n \to x$ in $\DD$, then for any $T \in [0,1)$ such that $x$ is continuous at $T$, we have $x_n\vert_{[0,T]} \to x\vert_{[0,T]}$ in the usual Skorokhod topology $J_1$ on the space $D([0,T],\GSs^{bal})$ of c\`{a}dl\`{a}g paths on a compact interval (see, e.g., \cite{MR1700749}). In the other direction, if for every $T \in [0,1)$ it holds that $x_n\vert_{[0,T]} \to x\vert_{[0,T]}$ in $D([0,T],\GSs^{bal})$, then $x_n \to x$ in $\DD$.

\paragraph{Counting processes and intensities.} Recall that $(N_t)_{t \geq 0}$ is a \emph{counting process} if it is a nondecreasing, integer-valued c\`{a}dl\`{a}g stochastic process with jumps equal to one. If $N$ is $\mathcal{F}_t$-adapted, then a nonnegative process $(\lambda_t)_{t \geq 0}$ is an \emph{intensity} of $N$ if it is $\mathcal{F}_t$-progressively measurable, $\int\limits_{0}^{t} \lambda_s \, ds < \infty$ for every $t \geq 0$ and the process $N_t - \int\limits_{0}^{t} \lambda_s \, ds$ is an $\mathcal{F}_t$-martingale.

The following lemma, used in the proof of Proposition \ref{prop:evolution-of-inversions}, follows easily from \cite[Corollary 1]{MR2245573}.
\begin{lemma}\label{le:concentration-counting-process}
Let $(N_t)_{t\ge 0}$ be a counting process with intensity $(\lambda_t)_{t\ge 0}$ bounded by $a$. Set $Z_t = N_t - \int_0^t \lambda_u \, du$. Then for any $s,u>0$,
\begin{displaymath}
  \Pp(\sup_{t\le s} |Z_t| \ge u) \le 2\exp\Big(- \frac{u^2}{4as + 2u/3}\Big).
\end{displaymath}
\end{lemma}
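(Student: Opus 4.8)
The plan is to recognize $Z_t = N_t - \int_0^t \lambda_u\,du$ as a compensated counting process and to feed it directly into the Bernstein/Freedman-type maximal inequality of \cite[Corollary 1]{MR2245573}. First I would check the structural hypotheses. Since $N$ is $\mathcal F_t$-adapted with intensity $(\lambda_u)$, the process $Z$ is by definition an $\mathcal F_t$-martingale, and the bound $\lambda_u \le a$ ensures $\E N_t \le at < \infty$, so all the integrability needed is in place. The jumps of $Z$ coincide with those of $N$ and are therefore bounded by $1$; in particular $Z$ is locally bounded, hence locally square integrable, and its predictable quadratic variation is $\langle Z\rangle_t = \int_0^t \lambda_u\,du \le at$, which for $t \le s$ is at most $as$.

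Next I would invoke the cited result. Corollary 1 of \cite{MR2245573} is a tail estimate for the running maximum of a purely discontinuous martingale in terms of a uniform bound on its jump sizes and an almost-sure bound on its predictable bracket; applying it with jump bound $1$ and bracket bound $as$ gives a one-sided estimate $\Pp(\sup_{t\le s} Z_t \ge u) \le \exp\bigl(-u^2/(c_1 as + c_2 u)\bigr)$. Running the same argument for the martingale $-Z$, whose jumps are again bounded by $1$ and whose predictable bracket equals $\langle Z\rangle$, and combining the two bounds by a union bound over $\{\sup_{t\le s} Z_t \ge u\}$ and $\{\sup_{t\le s}(-Z_t) \ge u\}$, yields the two-sided bound for $\sup_{t\le s}|Z_t|$ with the factor $2$ in front. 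Tracking the numerical constants through the cited inequality then produces exactly the denominator $4as + 2u/3$; if \cite[Corollary 1]{MR2245573} is already stated for the two-sided maximum, this last step is a direct substitution.

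I do not expect any real obstacle: the only points requiring care are (i) checking that the hypotheses of \cite[Corollary 1]{MR2245573} are met verbatim — in particular that the predictable bracket may be replaced by a deterministic upper bound, which is standard — and the minor measure-theoretic nuisance that the intensity is only assumed progressively measurable, which does not affect the bound $\langle Z\rangle_s\le as$; and (ii) the bookkeeping of constants so that the estimate comes out precisely in the form used in the proof of Proposition \ref{prop:evolution-of-inversions}. No new idea beyond a correct quotation of the external concentration inequality is needed.
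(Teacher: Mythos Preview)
Your proposal is correct and matches the paper's own treatment: the paper does not give a detailed proof either, simply stating that the lemma ``follows easily from \cite[Corollary 1]{MR2245573}.'' Your checklist (martingale property, jump bound $1$, predictable bracket $\le as$, union bound for the two-sided estimate) is exactly the verification needed to make that citation precise.
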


The next lemma, used in the proof of Theorem \ref{th:main-theorem-local}, gives a criterion for showing when a counting process is in fact a Markov process.

\begin{lemma}\label{lm:from-intensity-to-markov-rate}
Let $(N_t)_{t \geq 0}$ be a counting process with intensity $(\lambda_{t})_{t \geq 0}$. If $\lambda_t = p(N_t, t)$, where $p(\cdot,\cdot)$ is a bounded measurable function continuous in $t$, then $N_t$ is a Markov birth process with birth rate given by $p$.
\end{lemma}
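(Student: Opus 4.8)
The plan is to show that $(N_t)_{t\geq 0}$ solves a martingale problem which is well posed and whose unique solution is the desired time‑inhomogeneous Markov birth process; the Markov property and the identification of the birth rate then follow at once.

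First I would reduce to the natural filtration and record non‑explosion. Since $p$ is bounded, say by $a$, the process $N_t-\int_0^t\lambda_s\,ds$ is a true martingale, so $\E N_t\le \E N_0+at<\infty$ and $N$ is a.s.\ finite for every $t$; the same bound shows that the birth process with rates $p(\cdot,\cdot)$ is dominated by a rate‑$a$ Poisson process, hence non‑explosive and well defined. Because $\lambda_s=p(N_s,s)$ is a functional of the path of $N$, the process $\int_0^t p(N_s,s)\,ds$ is adapted to the natural filtration $(\mathcal{F}^N_t)$, and taking conditional expectations with respect to $(\mathcal{F}^N_t)$ shows that $N_t-\int_0^t p(N_s,s)\,ds$ is again a martingale, so we may assume from now on that the intensity is relative to $(\mathcal{F}^N_t)$.

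Next I would set up the martingale problem. For any bounded $f\colon\Z_+\to\R$, since $N$ increases only by unit jumps we have $f(N_t)-f(N_0)=\int_{(0,t]}\bigl(f(N_{s-}+1)-f(N_{s-})\bigr)\,dN_s$. The intensity hypothesis means that $dN_s$ has $(\mathcal{F}^N_s)$‑compensator $p(N_{s-},s)\,ds$, and as $N$ is continuous off a countable set, $p(N_{s-},s)=p(N_s,s)$ for Lebesgue–a.e.\ $s$; using boundedness of $f$ and of $p$ to guarantee integrability, it follows that
\[
f(N_t)-f(N_0)-\int_0^t p(N_s,s)\bigl(f(N_s+1)-f(N_s)\bigr)\,ds
\]
is an $(\mathcal{F}^N_t)$‑martingale. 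In other words $(N_t)$ solves the martingale problem for the time‑inhomogeneous generator $\mathcal{L}_t g(k)=p(k,t)\bigl(g(k+1)-g(k)\bigr)$ with initial law $\mathrm{Law}(N_0)$.

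Finally I would invoke uniqueness. The birth process with rates $p$, constructed in the standard way from exponential holding times (or by thinning a rate‑$a$ Poisson process), solves the same martingale problem and is Markov by construction; since its initial distribution can be taken equal to $\mathrm{Law}(N_0)$, it suffices to know the solution of this martingale problem is unique in law. As the generators $\mathcal{L}_t$ are uniformly bounded, this is classical — one may pass to the space–time process $t\mapsto(t,N_t)$ and apply the general theory of martingale problems for bounded generators (Ethier–Kurtz), or equivalently use that the $(\mathcal{F}^N_t)$‑compensator of a point process determines its law (Jacod, Brémaud). Uniqueness in law then forces $(N_t)$ to have the same law as the Markov birth process with rates $p$, hence $(N_t)$ is itself such a process. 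The only non‑routine ingredient is this well‑posedness step; if one prefers a self‑contained argument, it can be obtained by conditioning on the jump times of $N$ — stochastically dominated by a Poisson$(at)$ number of them on $[0,t]$ — and applying the martingale identity above iteratively to pin down all finite‑dimensional distributions, or by a Gronwall estimate on $\sup_k|P_{s,t}f(k)|$ for the associated transition operators.
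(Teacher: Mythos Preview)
Your proposal is correct. The paper itself gives only a sketch: it suggests showing that the conditional transition probabilities $\Pp(N_t = k \mid \{N_s = j\}\cap A)$ for $A \in \mathcal{F}_{s-}$ satisfy the Chapman--Kolmogorov forward equations with rates $p$, and then appeals to uniqueness of the solution (with references to Spreij and Jacobsen). Your route is the martingale-problem formulation: from the intensity hypothesis you derive that $f(N_t)-\int_0^t \mathcal{L}_s f(N_s)\,ds$ is a martingale for every bounded $f$, and then invoke well-posedness for bounded generators (Ethier--Kurtz) or, equivalently, the fact that the $(\mathcal{F}^N_t)$-compensator of a simple point process determines its law (Jacod, Br\'emaud). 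The two arguments are close in spirit --- both ultimately hinge on a uniqueness statement --- but yours is packaged at a higher level of abstraction and makes the reduction to the natural filtration and non-explosion explicit, whereas the paper's forward-equation approach is more hands-on and self-contained once the ODE system is written out. Either is acceptable here; your citation of the compensator-determines-law theorem is arguably the cleanest way to close the argument.
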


This rather intuitive statement can be proved by considering, for a given $s > 0$ and any $t \geq s$, the conditional transition probabilities of the form $\Pp(\{N_t = k\} | \{N_s = j\} \cap A)$, where $A \in \mathcal{F}_{s-}$. One then shows that they satisfy the same Chapman--Kolmogorov forward equations as the function $p(\cdot,\cdot)$ and since the solution is unique, it follows that $(N_t)_{t \geq 0}$ is in fact a Markov process. A related statement can be found in \cite[Proposition 3.3]{spreij} (see also \cite{jacobsen}).

\paragraph{Auxilliary results for the global limit.} Here we collect several technical statements and proofs which are used in Section \ref{sec:global-limit} along the way to the proof of Theorem \ref{th:main-theorem-global}.

We start with the proof of Corollary \ref{cor:ldp}.

\begin{proof}[Proof of Corollary \ref{cor:ldp}]
In what follows we fix $T > 0$ and suppress the dependence of the constants on $T$ in the notation.

Let $d_w$ be any distance on $\MM([0,1]^2)$ metrizing the weak convergence and recall the definition \eqref{eq:box-topology} of the box metric $d_{\Box}$.
By $B_w(\mu,\varepsilon)$ and $B_\Box(\mu,\varepsilon)$ we will denote the open balls with radius $\varepsilon$ around $\mu$ in $\MM([0,1]^2)$ with respect to the metrics $d_w$, $d_\Box$.

Let $\mu_{\beta}$ be the measure defined in Theorem \ref{th:starr-lln} with density $\rho_\beta$ given by \eqref{eq:definition-of-rho-beta}. Note that for any $x,y \in [0,1]$ the function $\beta \mapsto \rho_\beta(x,y)$ is continuous. Thus, by Scheff\'{e}'s lemma the function $\beta \mapsto \mu_\beta$ is continuous in the total variation distance.

For a permutation $\sigma \in \Ss_n$ let $\widehat{\mu}_{\sigma}$ be the permuton with density $f_\sigma(x,y) = n \id_{\sigma(\lceil nx\rceil) = \lceil ny\rceil}$. One easily checks that $d_\Box(\widehat{\mu}_\sigma,\mu_\sigma) \le \frac{2}{n}$. Using the fact that the box topology and the weak topology agree on the set of permutons $\PP$, together with continuity of the map $\beta \mapsto \mu_\beta$, by a compactness argument we conclude that for any $\varepsilon > 0$ there exists $\varepsilon' > 0$ such that for any $\beta \in [0,T]$ and $\sigma \in \Ss_n$, if $\mu_\sigma \in B_w(\mu_\beta,\varepsilon')$ then $\mu_\sigma \in B_\Box(\mu_\beta,\varepsilon)$.

Thus for fixed $\varepsilon > 0$ we can apply the upper bound of Theorem \ref{th:starr-walters-ldp} with $A= B_w(\mu_\beta,\varepsilon')^{c}$, which gives
\[
\limsup\limits_{n \to \infty} \frac{1}{n} \log \Pp_{n, \beta} \left( d_{\Box}\left(\mu_{\sigma^{n}}, \mu_{\beta} \right) \geq \varepsilon \right) \leq - \inf\limits_{\mu \in B(\mu_\beta,\varepsilon')^c} \I_{\beta}(\mu) =: -c_\beta.
\]
By Proposition \ref{prop:starr-walters-minimizer} $\mu_\beta$ is the unique minimizer of $\I_\beta$ and $\I_\beta(\mu_\beta) = 0$, so using lower-semicontinuity of $\I_\beta$ together with compactness of its sublevel sets, we get $c_\beta > 0$. Note that for any measure $\mu$, the `energy' $\EE(\mu)$ in the definition of $\I_\beta(\mu)$ is bounded by $1/2$, moreover the function $\beta\mapsto p(\beta)$ is continuous. This implies that the family of functions  $\{\beta \mapsto \I_\beta(\mu)\colon S(\mu|\lambda^{\otimes 2}) < \infty\}$ is uniformly equicontinuous on $[0,T]$. Thus, another compactness argument  (using again the continuity of the map $\beta \mapsto \mu_\beta$) implies that $\inf_{\beta \in [0,T]} c_\beta > 0$.
 
Using the definition of the box metric and of the measures $\mu_{\sigma_n}$, $\mu_\beta$, we thus obtain that for each $\varepsilon > 0$, there exists $b_{\varepsilon} > 0$ such that for any $\beta \in [0,T]$ there exists $n_{\beta,\varepsilon} \geq 1$ with the property that for all $n \ge n_{\beta,\varepsilon}$
\begin{align}\label{eq:concentration-single-beta}
\Pp_{n, \beta} \left( \sup\limits_{R} \left| \Delta_{R}\left(\sigma^{n}\right) - \int\limits_{R} \rho_{\beta}(x,y) \, dx \, dy \right| \geq \varepsilon \right) \leq  e^{-b_{\varepsilon} n}.
\end{align}
Our goal is now to argue that one may choose the constants $n_{\beta,\varepsilon}$ to be uniformly bounded over $\beta \in [0,T]$. The proof will be then concluded by adjusting the constants.

Let us fix $\varepsilon > 0$. By continuity of the function $\beta \mapsto \mu_\beta$ in the total variation distance, there exists $\delta > 0$ such that for any $\beta,\beta' \in [0,T]$ less than $\delta$ apart,
\[
\sup_R \Big| \int\limits_{R} \rho_{\beta}(x,y) \, dx \, dy - \int\limits_{R} \rho_{\beta'}(x,y) \, dx \, dy\Big| < \frac{\varepsilon}{2}.
\]
Let $0 = \beta_0 < \beta_1 <\ldots< \beta_N = T$ be a finite collection of points such that $\beta_{j+1} - \beta_j < \min(\delta, \frac{b_{\varepsilon/2}}{2})$. Consider now any $\beta \in [\beta_j,\beta_{j+1}]$, denote the left-hand side of \eqref{eq:concentration-single-beta} by $p_{n,\beta}(\varepsilon)$ and observe that
\begin{align}\label{eq:approximation-from-left}
p_{n,\beta}(\varepsilon) \le \Pp_{n, \beta} \left( \sup\limits_{R} \left| \Delta_{R}\left(\sigma^{n}\right) - \int\limits_{R} \rho_{\beta_j}(x,y) \, dx \, dy \right| \geq \frac{\varepsilon}{2} \right).
\end{align}

Recall the definition \eqref{eq:definition-of-mallows} of the Mallows distribution $\pi_{n,q}$. For any $B \subseteq \Ss_n$, we have 
\begin{multline*}
  \Pp_{n,\beta}(\sigma^n \in B) = \int\limits_{B} e^{(\beta - \beta_j) \frac{\Inv(\sigma)}{n}} \cdot \frac{\Zz_{n,\exp(\beta_j/n)}}{\Zz_{n,\exp(\beta/n)}} d\pi_{n,\exp(\beta_j/n)}(\sigma)\\
  \le e^{(\beta - \beta_j)\frac{n-1}{2}} \frac{\Zz_{n,\exp(\beta_j/n)}}{\Zz_{n,\exp(\beta/n)}} \Pp_{n,\beta_j}(\sigma^n \in B)
  \le e^{(\beta - \beta_j)\frac{n-1}{2}} \Pp_{n,\beta_j}(\sigma^n \in B),
\end{multline*}
where in the first inequality we used $\Inv(\sigma) \leq \frac{n(n-1)}{2}$ and in the second one monotonicity of $\Zz_{n,\exp(\beta/n)}$ with respect to $\beta$.
Taking into account \eqref{eq:approximation-from-left} and \eqref{eq:concentration-single-beta} we thus obtain that for $n \ge \max\limits_{j<N} n_{\beta_j,\varepsilon/2}$,
\begin{displaymath}
  \sup_{\beta \in [0,T]} p_{n,\beta}(\varepsilon) \le  \max_{j< N} \Big( e^{(\beta - \beta_j)\frac{n-1}{2}} p_{n,\beta_j}(\varepsilon/2)\Big) \le  e^{- (b_{\varepsilon/2} / 2) n},
\end{displaymath}
which ends the proof.
\end{proof}

\begin{lemma}\label{le:equicontinuity-of-z}
For any $T > 0$, the family of functions $\{z_{x,a}\colon x,a \in [0,1]\}$, where $z_{x,a}(t)$ is given by \eqref{eq:formula-for-global-limit}, is uniformly equicontinuous on the interval $[0,T]$.
\end{lemma}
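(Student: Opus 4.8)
The plan is to show that the family $\{z_{x,a}\}_{x,a\in[0,1]}$ is uniformly Lipschitz on $[0,T]$, which immediately yields uniform equicontinuity. The only subtlety is the apparent singularity of the formula \eqref{eq:formula-for-global-limit} at $t=0$; I will remove it by rewriting $z_{x,a}$ as an average of a smooth function. Set $N_1(t)=e^{xt}(1-a)+ae^t$ and $N_2(t)=e^{xt}(1-a)+a$, so that for $t\neq 0$ we have $z_{x,a}(t)=\frac1t\,g_{x,a}(t)$ with $g_{x,a}(t):=\log N_1(t)-\log N_2(t)$. For every $x,a\in[0,1]$ both $N_1$ and $N_2$ are strictly positive on all of $\R$, hence $g_{x,a}\in C^\infty(\R)$, and $g_{x,a}(0)=\log 1-\log 1=0$. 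Consequently $g_{x,a}(t)=\int_0^t g_{x,a}'(s)\,ds$, and the substitution $s=tu$ gives
\[
z_{x,a}(t)=\int_0^1 g_{x,a}'(tu)\,du\qquad(t\neq 0),
\]
where the right-hand side extends smoothly to $t=0$ with value $g_{x,a}'(0)$; a direct computation gives $g_{x,a}'(0)=a$, so this agrees with the convention $z_{x,a}(0)=a$. Thus the displayed identity holds for all $t\in[0,T]$, and differentiating under the integral sign yields $z_{x,a}'(t)=\int_0^1 u\,g_{x,a}''(tu)\,du$, whence
\[
\sup_{t\in[0,T]}\bigl|z_{x,a}'(t)\bigr|\le \tfrac12\sup_{s\in[0,T]}\bigl|g_{x,a}''(s)\bigr|.
\]

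It then remains to bound $g_{x,a}''$ uniformly over $s\in[0,T]$ and $x,a\in[0,1]$. Using $g_{x,a}''=(\log N_1)''-(\log N_2)''$ together with $(\log N_k)''=N_k''/N_k-(N_k'/N_k)^2$, one observes that $N_k(s),N_k'(s),N_k''(s)$ are all combinations of $e^{xs}$ and $e^s$ with coefficients in $[0,1]$, hence bounded above by $2e^T$ on $[0,T]$, while $N_k(s)\ge 1$ because $e^{xs},e^s\ge 1$ for $s\ge 0$. Therefore $|g_{x,a}''(s)|$ is bounded by a constant depending only on $T$, and hence so is $\sup_{t\in[0,T]}|z_{x,a}'(t)|$. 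This shows that all the functions $z_{x,a}$, $x,a\in[0,1]$, are Lipschitz on $[0,T]$ with one and the same constant, which gives the claimed uniform equicontinuity.

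The argument is essentially a routine calculus estimate; the only place requiring a bit of care is the passage through $t=0$, which is handled once and for all by the representation $z_{x,a}(t)=\int_0^1 g_{x,a}'(tu)\,du$, so I do not expect any genuine obstacle.
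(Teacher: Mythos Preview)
Your argument is correct. The Hadamard-style representation $z_{x,a}(t)=\int_0^1 g_{x,a}'(tu)\,du$ removes the singularity at $t=0$ cleanly, and your bounds on $N_k$, $N_k'$, $N_k''$ and the lower bound $N_k(s)\ge 1$ for $s\ge 0$ are all valid, so the uniform Lipschitz constant follows.

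The paper's own proof takes a softer route: it rewrites $z_{x,a}(t)=\frac{1}{t}\log\bigl(1+a\,\frac{e^t-1}{e^{xt}(1-a)+a}\bigr)$, observes that the denominator is at least $1$, and uses a first-order expansion of the logarithm to show $z_{x,a}(t)\to a$ uniformly in $(x,a)$ as $t\to 0$; away from $t=0$ it simply invokes joint continuity of $(x,a,t)\mapsto z_{x,a}(t)$ on the compact set $[0,1]^2\times[\varepsilon,T]$. So the paper splits into a local estimate near $0$ plus a compactness argument, while you prove a single global Lipschitz bound. Your approach is slightly more work but yields a stronger, quantitative conclusion (an explicit Lipschitz constant depending only on $T$); the paper's approach is quicker but non-quantitative. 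Both are perfectly adequate for the use made of the lemma in the proof of Theorem~\ref{th:main-theorem-global}.
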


\begin{proof}
We can rewrite the formula \eqref{eq:formula-for-global-limit} as
\[
z_{x,a}(t) = \frac{1}{t} \log \left( 1 + a \frac{e^t - 1}{e^{xt}(1-a) + a} \right).
\]
Since the denominator under the logarithm is at least one for $x,a \in [0,1]$, we have
\[
z_{x,a}(t) = \frac{1}{t} \cdot a\frac{e^t - 1}{e^{xt}(1-a) + a} + o\left(\frac{1}{t}(e^t - 1)\right)
\]
as $t \to 0$, which implies that $z_{x,a}(t) \to a$, uniformly over $a,x \in [0,1]$. Moreover, the function $(a,x,t) \mapsto z_{x,a}(t)$ is clearly continuous and hence uniformly continuous on $[0,1]^2 \times [\varepsilon,T]$ for any $0<\varepsilon < T <\infty$.
\end{proof}

\begin{lemma}\label{le:Lipschitz-condition}
For each $T > 0$ there exists a constant $C = C_T > 0$ such that for all $t \in [0,T]$ the function $(x,y) \mapsto \lambda_x(y,t)$, with $\lambda_x(y,t)$ defined by \eqref{eq:rescaled-rates}, is $C$-Lipschitz on $[0,1]^2$.
\end{lemma}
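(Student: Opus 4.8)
The plan is to show that the partial derivatives $\partial_x \lambda_x(y,t)$ and $\partial_y \lambda_x(y,t)$ exist and are bounded uniformly over $(x,y)\in[0,1]^2$ and $t\in[0,T]$; the asserted uniform Lipschitz bound then follows from the mean value theorem, with $C_T$ any upper bound for the norm of the $(x,y)$-gradient on the compact set $[0,1]^2\times[0,T]$. The only genuine issue is that the defining formula \eqref{eq:rescaled-rates} has apparent singularities at $t=0$ and at $x=0$; once these are seen to be removable, the statement reduces to continuity of a smooth function on a compact set.

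First I would introduce the auxiliary functions $g(u)=u/(e^u-1)$ and $h(u)=(1-e^{-u})/u$, extended by $g(0)=h(0)=1$. Each is real-analytic on $\R$: after dividing the power series $e^u-1=u(1+u/2+\cdots)$ and $1-e^{-u}=u(1-u/2+\cdots)$ by $u$ one gets a power series with invertible constant term, so $g,h\in C^\infty(\R)$ with all derivatives bounded on every compact interval, and $g'(0)=h'(0)=-\tfrac12$. A direct manipulation of \eqref{eq:rescaled-rates} gives, for $t\ne 0$,
\[
\frac{xe^{tx}}{e^{tx}-1}\bigl(1-e^{-ty}\bigr)=e^{tx}g(tx)\cdot y\,h(ty),
\]
since $\frac{x}{e^{tx}-1}=\frac1t g(tx)$ and $1-e^{-ty}=ty\,h(ty)$. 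Hence $\lambda_x(y,t)=\frac{y}{t}\bigl(e^{tx}g(tx)h(ty)-1\bigr)$, an identity which also covers $x=0$ and which one checks reduces to $\lambda_0(y,t)=\frac1t\bigl((1-e^{-ty})/t-y\bigr)$ at $x=0$.

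Next I would set $N(x,y,t):=e^{tx}g(tx)h(ty)-1$, a $C^\infty$ function of $(x,y,t)$ on $[0,1]^2\times\R$ with $N(x,y,0)=0$. By Hadamard's lemma (Taylor's formula with integral remainder) $N(x,y,t)=t\int_0^1 \partial_t N(x,y,st)\,ds=:t\,\widetilde N(x,y,t)$ with $\widetilde N\in C^\infty([0,1]^2\times\R)$, so $\lambda_x(y,t)=y\,\widetilde N(x,y,t)$ extends to a $C^\infty$ function of $(x,y,t)$ on a neighbourhood of $[0,1]^2\times[0,T]$. Using $g'(0)=h'(0)=-\tfrac12$ one computes $\partial_t N(x,y,0)=x-\tfrac{x}{2}-\tfrac{y}{2}=\tfrac{x-y}{2}$, so $\widetilde N(x,y,0)=\tfrac{x-y}{2}$ and the extension at $t=0$ equals $\tfrac12 y(x-y)$, in agreement with \eqref{eq:rescaled-rates}. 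Since $[0,1]^2\times[0,T]$ is compact, $\partial_x\lambda_x(y,t)$ and $\partial_y\lambda_x(y,t)$ are continuous and hence bounded there by some $C=C_T>0$, which yields the claim. The main obstacle — a mild one — is the bookkeeping of rewriting \eqref{eq:rescaled-rates} via $g$ and $h$ so that the $1/t$ and $1/(e^{tx}-1)$ factors visibly cancel; after that the conclusion is immediate from compactness.
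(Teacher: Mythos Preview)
Your proof is correct, and the overall strategy---bound the partial derivatives in $(x,y)$ uniformly over $t\in[0,T]$---is the same as the paper's. The execution, however, is genuinely different. The paper computes $\partial_x\lambda_x(y,t)$ and $\partial_y\lambda_x(y,t)$ explicitly for $t>0$, $x\in(0,1]$, and bounds each one by hand using the elementary estimates $|a|/D\le |e^a-1|\le D|a|$ and $0\le e^a-a-1\le Da^2$ on $[-T,T]$; the boundary cases $t=0$ and $x=0$ are then handled separately (the first declared straightforward, the second by continuity). You instead absorb all the apparent singularities at once by rewriting $\lambda_x(y,t)=y\,\widetilde N(x,y,t)$ with $\widetilde N\in C^\infty([0,1]^2\times\R)$ via the analytic functions $g(u)=u/(e^u-1)$, $h(u)=(1-e^{-u})/u$ and Hadamard's lemma, and then invoke compactness. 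Your route is cleaner and avoids any case analysis; the paper's route is more elementary (no Taylor-with-integral-remainder needed) and yields explicit constants in terms of $T$, which is occasionally convenient elsewhere in the paper even if not strictly required here.
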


\begin{proof}
The case $t = 0$ is straightforward, let us now focus on $t > 0$.
Using that for some constant $D$ and all $a\in [-T,T]$ we have $|a|/D \le |e^{a} - 1| \le D|a|$ and $0\le e^{a} - a - 1 \le Da^2$, we obtain that for $t > 0$, $x \in (0,1]$ and $y \in [0,1]$,
\begin{multline*}
  \Big|\frac{\partial}{\partial x} \lambda_x(y,t) \Big|= \Big| \frac{(1-e^{-ty})}{t} \cdot \frac{(e^{tx} + xte^{tx})(e^{tx}-1) - txe^{2tx}}{(e^{tx} - 1)^2}\Big| = \Big|\frac{(1-e^{-ty})}{t} \cdot \frac{e^{tx}(e^{tx} - tx -1)}{(e^{tx}-1)^2}\Big|\\
  \le e^{tx} D^4 y \le D^4 e^T.
\end{multline*}
Similarly, for $t > 0$, $x \in (0,1]$, $y \in [0,1]$,
\begin{multline*}
  \Big|\frac{\partial}{\partial y} \lambda_x(y,t) \Big| = \frac{1}{t} \Big|-1 + \frac{txe^{tx}}{e^{tx}-1}e^{-ty}\Big| = \Big| \frac{1 - e^{tx} + txe^{tx}e^{-ty}}{t(e^{tx}-1)} \Big|\\
  \le \Big|\frac{(e^{tx} - tx - 1) -  tx(e^{tx}-1) - txe^{tx}(e^{-ty} - 1)}{t(e^{tx}-1)}\Big| \le D^2(2x+ye^{tx}) \le 3 D^2 e^T.
\end{multline*}
Thus for every $t \in (0,T]$, the function $(x,y) \mapsto \lambda_{x}(y,t)$ is $C$-Lipschitz on $(0,1]\times [0,1]$ with some constant $C$ depending only on $T$. Since for fixed $y$ and $t$ the function $(x,y) \mapsto \lambda_x(y,t)$ is continuous at $x = 0$, this concludes the proof.
\end{proof}

\begin{lemma}\label{lm:density-lower-bound}
For $\beta \in [0,T]$ the density $\rho_{\beta}(x,y)$ given by \eqref{eq:definition-of-rho-beta} is bounded from below by some $A = A_T > 0$ depending only on $T$.
\end{lemma}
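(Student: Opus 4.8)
The plan is to treat the cases $\beta=0$ and $\beta\in(0,T]$ separately. For $\beta=0$ there is nothing to prove, since $\rho_0\equiv 1$; hence it suffices to produce a positive lower bound valid for all $\beta\in(0,T]$ and all $x,y\in[0,1]$ which happens to be $\le 1$. For such $\beta$ the numerator of \eqref{eq:definition-of-rho-beta} is bounded below by an elementary inequality: since $\sinh u\ge u$ for $u\ge 0$, we have $(\beta/2)\sinh(\beta/2)\ge \beta^2/4$. Thus the whole statement reduces to establishing an upper bound of order $\beta^2$, uniform in $x,y\in[0,1]$ and $\beta\in[0,T]$, on the square in the denominator of \eqref{eq:definition-of-rho-beta}.

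To get this, I would write the expression inside the square as $\phi_{x,y}(\beta)-\psi_{x,y}(\beta)$, where $\phi_{x,y}(\beta)=e^{-\beta/4}\cosh(\beta(x-y)/2)$ and $\psi_{x,y}(\beta)=e^{\beta/4}\cosh(\beta(x+y-1)/2)$. Both functions equal $1$ at $\beta=0$. Differentiating in $\beta$ and using $|x-y|\le 1$, $|x+y-1|\le 1$ shows that $|\phi_{x,y}'(\beta)|$ and $|\psi_{x,y}'(\beta)|$ are bounded, for $\beta\in[0,T]$, by a constant $D_T$ depending only on $T$ (and $D_T\ge\tfrac14$, since $\phi_{x,y}'(0)=-\tfrac14$). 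By the mean value theorem $|\phi_{x,y}(\beta)-1|\le D_T\beta$ and $|\psi_{x,y}(\beta)-1|\le D_T\beta$, so $|\phi_{x,y}(\beta)-\psi_{x,y}(\beta)|\le 2D_T\beta$ and the denominator of \eqref{eq:definition-of-rho-beta} is at most $4D_T^2\beta^2$. Combining with the lower bound on the numerator gives $\rho_\beta(x,y)\ge \frac{1}{16D_T^2}$ for all $\beta\in(0,T]$ and $x,y\in[0,1]$; as $16D_T^2\ge 1$, the constant $A_T:=\frac{1}{16D_T^2}\le 1$ also works at $\beta=0$, which finishes the proof.

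There is no genuine obstacle here; the only point that needs care is that the crude uniform bound $|\phi_{x,y}-\psi_{x,y}|\le 2\cosh^2(T/2)$ would make the resulting lower bound degenerate as $\beta\to 0$, so one must use that the numerator and denominator of \eqref{eq:definition-of-rho-beta} both vanish to second order at $\beta=0$ — this is exactly what the first‑order estimate above captures. As an alternative one could instead verify that $\rho_\beta(x,y)$ extends to a continuous, strictly positive function on the compact set $[0,1]^2\times[0,T]$ (the denominator never vanishes for $\beta>0$, since the term inside the square is always $\le -e^{-\beta/4}\sinh(\beta/2)<0$ by monotonicity of $\cosh$, and $\rho_\beta\to 1$ uniformly as $\beta\to 0$ by the same Taylor estimate) and then take $A_T$ to be its minimum.
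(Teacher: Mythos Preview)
Your argument is correct. The mean-value estimate $|\phi_{x,y}(\beta)-\psi_{x,y}(\beta)|\le 2D_T\beta$ together with $\sinh(\beta/2)\ge \beta/2$ gives exactly the order matching needed to control the ratio uniformly near $\beta=0$, and for $\beta$ bounded away from $0$ the same bound still applies. Your remark that $D_T\ge 1/4$ so that $A_T\le 1$ covers $\beta=0$ as well, and the alternative you sketch (nonvanishing of the denominator for $\beta>0$ via $e^{-\beta/4}\cosh(\beta u)-e^{\beta/4}\cosh(\beta v)\le -e^{-\beta/4}\sinh(\beta/2)$, plus uniform convergence to $1$ as $\beta\to 0$) is also valid.

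The paper takes a different route: it changes variables to $u=(x-y)/2$, $v=(x+y-1)/2$, argues by differentiation that the denominator attains its maximum over the square $|u|+|v|\le 1/2$ only at the four corner points $(\pm 1/2,0)$, $(0,\pm 1/2)$, and then evaluates $\rho_\beta$ explicitly at those corners (e.g.\ obtaining $\beta(e^\beta-1)/(e^\beta+e^{-\beta}-2)$), checking by inspection that each such expression is bounded below uniformly for $\beta\in[0,T]$. Your approach is shorter and avoids any optimization in $(x,y)$ or case-by-case computation; the paper's approach, in exchange, locates exactly where the minimum of $\rho_\beta$ over the unit square is achieved and gives an explicit formula for $\min_{x,y}\rho_\beta(x,y)$, which is more informative but not needed for the lemma as stated.
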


\begin{proof}
Let $u = \frac{x-y}{2}$, $v = \frac{x+y-1}{2}$, so that $(x,y) \in [0,1]^2$ corresponds to $(u,v) \in [-\frac{1}{2}, \frac{1}{2}]^2$, $|u| + |v| \leq 1/2$. Let us further write
\[
\rho_{\beta}(x,y) = \widetilde{\rho}_{\beta}(u,v) = \frac{(\beta / 2) \sinh(\beta / 2)}{\left( e^{-\beta / 4} \cosh \beta u - e^{\beta / 4} \cosh \beta v  \right)^2}.
\]
By differentiating the function in the denominator one easily checks that it cannot attain its maximum in the interior of the set $\{(u,v) \in [-\frac{1}{2}, \frac{1}{2}]^2 \, : \, |u| + |v| \leq 1/2\}$. Likewise, an elementary calculation shows that on the boundary its maximum may be attained only at one of the points $(u,v) \in \left\{ (\frac{1}{2},0), (0,\frac{1}{2}), (-\frac{1}{2},0), (0, -\frac{1}{2}) \right\}$. Thus it is enough to verify that $\widetilde{\rho}_{\beta}$ is bounded away from $0$, uniformly in $\beta$, at these points. Since the calculation is similar in all cases, let us only check, e.g., $(u,v) = (\frac{1}{2},0)$. We have
\begin{align*}
\widetilde{\rho}_{\beta}\left(\frac{1}{2},0\right) & =  \frac{(\beta / 2) \sinh(\beta / 2)}{\left( e^{-\beta / 4} \cosh \frac{\beta}{2} - e^{\beta / 4}  \right)^2} = \frac{(\beta / 2) \sinh(\beta / 2)}{ e^{-\beta / 2} \cosh^2 \frac{\beta}{2} + e^{\beta / 2} - 2\cosh \frac{\beta}{2}} = \\
& = \frac{\left(\frac{\beta}{2}\right) \left( \frac{e^{\beta/2} - e^{-\beta/2}}{2}\right)}{ e^{-\beta / 2} \frac{e^\beta + e^{-\beta} + 2}{4} - e^{-\beta/2}} = \frac{\beta (e^\beta - 1)}{ e^\beta + e^{-\beta} - 2},
\end{align*}
which is easily seen to be bounded away from $0$ uniformly in $\beta \in [0,T]$.
\end{proof}
\end{section}

\bibliography{bibliography}{}
\bibliographystyle{amsalpha}

\paragraph{E-mails:}
R. Adamczak: \href{mailto:r.adamczak@mimuw.edu.pl}{r.adamczak@mimuw.edu.pl}, M. Kotowski: \href{mailto:michal.kotowski@mimuw.edu.pl}{michal.kotowski@mimuw.edu.pl}

\end{document}